\theoremstyle{definition}
\newtheorem{definition}{Definition}
\newtheorem{question}[definition]{Question}
\newtheorem{remark}[definition]{Remark}
\newtheorem{observation}[definition]{Observation}
\theoremstyle{plain}
\newtheorem{theorem}[definition]{Theorem}
\newtheorem{proposition}[definition]{Proposition}
\newtheorem{lemma}[definition]{Lemma}
\newtheorem{conjecture}[definition]{Conjecture}
\DeclareMathOperator{\conv}{conv}
\DeclareMathOperator{\rc}{rc}
\DeclareMathOperator{\rcl}{rc_{\scriptscriptstyle\square}}
\DeclareMathOperator{\rcu}{rc_0}
\DeclareMathOperator{\obs}{Obs}
\DeclareMathOperator{\bd}{bd}
\DeclareMathOperator{\aff}{aff}
\DeclareMathOperator{\vol}{vol}
\newcommand{\R}{\mathds{R}}
\newcommand{\Z}{\mathds{Z}}
\newcommand{\Q}{\mathds{Q}}
\newcommand{\cO}{\mathcal{O}}
\newcommand{\I}{\mathcal{I}}
\DeclarePairedDelimiter{\card}{\lvert}{\rvert}
\newcommand{\one}{\mathbf{1}}
\newcommand{\zero}{\mathbf{0}}
\newcommand{\st}{:}		
\newcommand{\T}{^\intercal}
\newcommand{\sprod}[2]{{#1}\T{#2}}
\newcommand{\define}{\coloneqq}
\newcommand{\interior}[1]{\mathrm{int}(#1)}
\newcommand{\ceiling}[1]{\left\lceil #1\right\rceil}
\newcommand{\setcond}[2]{\left\{ #1 \,:\, #2 \right\}}
\begin{document}

\title{Computational Aspects of Relaxation Complexity: Possibilities and Limitations}
\author[1]{Gennadiy Averkov}
\author[2]{Christopher Hojny}
\author[1]{Matthias Schymura}

\affil[1]{
  BTU Cottbus-Senftenberg\\
  Platz der Deutschen Einheit 1\\
  03046 Cottbus, Germany\\
  \newline
  \textit{email:} \{averkov, schymura\}@b-tu.de
}
\affil[2]{
  Eindhoven University of Technology\\
  Combinatorial Optimization Group\\
  PO Box~513\\
  5600 MB Eindhoven, The~Netherlands\\
  \textit{email:} c.hojny@tue.nl
}

\renewcommand\Affilfont{\itshape\small}

\maketitle

\begin{abstract}
  The relaxation complexity~$\rc(X)$ of the set of integer points~$X$
  contained in a polyhedron is the smallest number of facets of any
  polyhedron~$P$ such that the integer points in~$P$ coincide with~$X$.
  It is a useful tool to investigate the existence of compact linear
  descriptions of~$X$.
  In this article, we derive tight and computable upper bounds on~$\rc_\Q(X)$, a variant
  of~$\rc(X)$ in which the polyhedra~$P$ are required to be rational,
  and we show that~$\rc(X)$ can be computed in polynomial time if~$X$ is
  2-dimensional.
  Further, we investigate computable lower bounds on~$\rc(X)$ with the
  particular focus on the existence of a finite set~$Y \subseteq \Z^d$ such
  that separating~$X$ and~$Y \setminus X$ allows us to deduce $\rc(X) \geq k$.
  In particular, we show for some choices of~$X$ that no such finite set~$Y$
  exists to certify the value of~$\rc(X)$, providing a negative answer
  to a question by Weltge (2015).
  We also obtain an explicit formula for~$\rc(X)$ for specific classes of sets~$X$ and present the first practically applicable approach to compute~$\rc(X)$ for sets~$X$ that admit a finite certificate.

  \textbf{Keywords} integer programming formulation, relaxation complexity
\end{abstract}

\section{Introduction}

A successful approach for solving discrete optimization problems is
based on integer programming techniques.
To this end, (\romannumeral1) a suitable encoding~$X \subseteq \Z^d$ of the
discrete problem's solutions together with an objective~$c \in \R^d$
has to be selected, and~(\romannumeral2) a linear system~$Ax \leq b$, $Cx = f$ defining a
polyhedron~$P \subseteq \R^d$ has to be found such that~$P \cap \Z^d = X$.
In the following, we refer to such a polyhedron~$P$ as a \emph{relaxation}
of~$X$.
Then, the discrete problem can be tackled by solving the integer program
$\max \{ \sprod{c}{x} : Ax \leq b,\; Cx = f,\; x \in
\Z^d \}$, which can be solved, e.g., by branch-and-bound or branch-and-cut
techniques (see, e.g., Conforti et al.~\cite{ConfortiEtAl2014} or
Schrijver~\cite{schrijver1986theory}).

To solve these integer programs efficiently, the focus in Step~(\romannumeral2) was mostly on identifying facet defining inequalities of the integer hull of~$P$.
Such inequality systems, however, are typically exponentially large and one
may wonder about the minimum number of facets of any relaxation of~$X$,
which allows to compare different encodings~$X$ of the discrete problem.
Kaibel \& Weltge~\cite{kaibelweltge2015lowerbounds} called this quantity
the \emph{relaxation  complexity} of~$X$, denoted~$\rc(X)$, and showed that
certain encodings of, e.g., the traveling salesman problem or
connected subgraphs, have exponentially large relaxation complexity.
They also introduced the quantity~$\rc_{\Q}(X)$, which is the smallest
number of facets of a \emph{rational} relaxation of~$X$, and posed the
question whether $\rc(X) = \rc_\Q(X)$ holds in general.
Recently, this was answered affirmatively for~$d \leq 4$,
see~\cite{averkovschymura2020complexity}.
The same authors also showed that~$\rc(X)$ is computable if~$d \leq 3$
using an algorithm with potentially super-exponential  worst case running time.
Computability for~$d \geq 4$ and explicit formulas for~$\rc(X)$ for
specific sets~$X$, however, are still open problems.

In this article, we follow the line of research started in~\cite{averkovschymura2020complexity} and derive a tight upper
bound on~$\rc_\Q(X)$ for arbitrary dimensions, which is
based on a robustification of~$\rc_\Q(X)$ against numerical errors
(Section~\ref{sec:ub}).
We also point out when this upper bound can be used to compute~$\rc_\Q(X)$ exactly.
Computable lower bounds on $\rc(X)$ are dealt with in Section~\ref{sec:rc-square}, with a particular focus on the question about the existence of finite certificates for the size of a minimal relaxation.
In Section~\ref{sec:AlgorithmDimension2} we focus on dimension~$d=2$ and show
that there is a (weakly) polynomial time algorithm to compute~$\rc(X)$ in this case.
Section~\ref{sec:box} derives an explicit formula for the relaxation complexity
for integer points in rectangular boxes and crosspolytopes.
In Section~\ref{sec:MIP} we conclude the article with a discussion of numerical experiments.
We also discuss our first practically applicable implementation to compute $\rc(X)$ in cases where this quantity is known to have a finite certificate.

An extended abstract of this article appeared as~\cite{averkovhojnyschymura021ipco} in the conference proceedings of IPCO 2021.

\medskip
\noindent\emph{Notation and Terminology.}
For a set~$X \subseteq \R^d$, we denote by~$\conv(X)$ its convex hull and
by~$\aff(X)$ its affine hull.
The boundary and interior of~$X$ are denoted by~$\bd(X)$
and~$\interior{X}$, respectively.
A set~$X \subseteq \Z^d$ is called \emph{lattice-convex} if \mbox{$\conv(X) \cap \Z^d = X$}.
The dimension of a set~$X$, denoted by $\dim(X)$, is the dimension of its affine hull.
For two sets~$X, Y \subseteq \R^d$, $X + Y = \{x + y : x \in X,\; y \in Y\}$ is their Minkowski sum.
We define $[n] \define \{1, \dots, n\}$, $[n]_0 \define
\{0,1,\dots,n\}$, and~$e_i$ to be the~$i$-th standard unit vector in~$\R^d$.

A crucial concept is that of observers~\cite{averkovschymura2020complexity}.
For~$X \subseteq \Z^d$ with~$\conv(X) \cap \Z^d = X$,
an \emph{observer} is a point~$z \in \Z^d \setminus X$ such that $\conv(X \cup \{z\}) \cap \Z^d = X \cup \{z\}$.
The set of all observers of~$X$ is denoted by~$\obs(X)$.
The relevance of the observers for finding~$\rc(X)$ comes from the fact that any linear system~$Ax \leq b$ separating~$X$
and~$\obs(X)$ also separates~$X$ and~$\Z^d \setminus X$,
see~\cite{averkovschymura2020complexity}.
This motivates, for~$X, Y \subseteq \Z^d$, to
introduce~$\rc(X,Y)$ (resp.~$\rc_\Q(X,Y)$) as the smallest number of inequalities in a (rational) system~$Ax \leq b$ separating~$X$ and~$Y \setminus X$.

\medskip
\noindent\emph{Related Literature.}
Kaibel \& Weltge~\cite{kaibelweltge2015lowerbounds} introduced the notion
of relaxation complexity and derived a concept for deriving lower bounds
on~$\rc(X)$.
Computability of~$\rc(X)$, for~$d = 2$, has been shown by
Weltge~\cite{weltge2015diss} who also derived a lower bound on~$\rc(X)$
only depending on the dimension of~$X$.
In~\cite{averkovschymura2020complexity}, this lower bound has been improved
and computability also for~$d=3$ has been established.
The interplay between the number of inequalities in a relaxation and the
size of their coefficients has been investigated in~\cite{Hojny2020}; see
also~\cite{hojny2018strong} for a lower bound on the relative size of
coefficients in a relaxation.
For~$X \subseteq \{0,1\}^d$, Jeroslow~\cite{jeroslow1975ondefining} derived
an upper bound on~$\rc(X, \{0,1\}^d)$, which is an important subject in the
area of social choice, see, e.g., Hammer et
al.~\cite{hammeribarakipeled1981threshold} and~Taylor \&
Zwicker~\cite{taylorzwicker1999simplegames}.
The problem of separating two sets by a function from a given family is
known as the binary classification problem in the machine learning community.
The support vector machine is the binary classification by means  of linear
functions, see, e.g., Cristiani \& Shawe-Taylor~\cite{cristianini_shawe-taylor_2000}.
Our investigations can be interpreted as a variation on the
support-vector-machine theme, as we deal with the binary classification
using the maximum of (a possibly small) number of linear functions.

\section{Computable Bounds on the Relaxation Complexity}
\label{sec:ub}

Let~$X \subseteq \Z^d$ be a lattice-convex set.
Throughout the article we assume that $X$ is finite and full-dimensional.
Finiteness is a crucial property for many arguments to work, but at the same time it is natural since the usual combinatorial optimization problem deals with integer points in a certain polytope.
Assuming $X$ to be full-dimensional is no loss of generality: The relaxation complexity as defined by Kaibel \& Weltge~\cite{kaibelweltge2015lowerbounds} only concerns the inequalities of a linear description of~$X$, while equations can be added at no cost.

Besides the theoretical lower bounds on~$\rc(X)$ based on hiding sets (see Section~\ref{sect:hiding-sets} below), to
the best of our knowledge no systematic way for deriving \emph{algorithmically
computable} lower and upper bounds on~$\rc(X)$ and~$\rc_\Q(X)$ has been
discussed so far.
One of our goals in this work is to make progress on this problem.
To this end, note that~$\rc(X,Y) \leq \rc(X)$ for any~$Y \subseteq \Z^d$.
This bound is best possible if~$\rc(X,Y) = \rc(X)$, which leads us to the
following concept.

\begin{definition}
  Let~$X \subseteq \Z^d$ be lattice-convex.
  Then, $Y \subseteq \Z^d$ is called a \emph{certificate} for~$\rc(X)$,
  if~$\rc(X, Y) = \rc(X)$.
\end{definition}

Analogously, we define certificates for~$\rc_\Q(X)$ and the related operators that we describe below.
In view of~\cite[Prop.~4.3]{averkovschymura2020complexity}, a certificate for~$X$ is given
by its set of observers~$\obs(X)$.
Since~$\obs(X)$ can be infinite, we might not be able to systematically
compute~$\rc(X, \obs(X))$.
If~$Y$ is finite, however, $\rc(X,Y)$ can be computed using a mixed-integer
program, see~\cite{averkovschymura2020complexity} and
Section~\ref{sec:MIP}, which suggests the following lower bound on~$\rc(X)$:

For~$t \in \Z_{>0}$, let $B_t = [-t,t]^d \cap \Z^d$ be the set of integer points
with coordinates bounded by~$t$ in absolute value.
Now, it is clear that $\rc(X,B_t) \leq \rc(X,B_{t'})$, for every $t \leq t'$.
Thus, the parameter
\[
\rcl(X) \define \max_{t \in \Z_{>0}} \,\rc(X,B_t)
\]
is well-defined and satisfies~$\rcl(X) \leq \rc(X)$.
Moreover, this is an identity if and only if~$\rc(X)$ admits a \emph{finite} certificate.

In the remainder of this section, we also introduce a computable (and
eventually tight) upper bound on~$\rc_\Q(X)$ and we show that this always admits a finite certificate.
Moreover, we also review the idea of so-called hiding sets and introduce hiding graphs which will be most useful for our computational approach discussed in Section~\ref{sec:MIP}.

\subsection{A computable upper bound on \texorpdfstring{$\rc_\Q(X)$}{rc\_Q(X)}}

To find an upper bound on~$\rc_\Q(X)$,
we introduce a robustification of~$\rc(X)$ by enforcing that a
relaxation is not allowed to support~$\conv(X)$.
To make this precise, for~$\varepsilon > 0$, let~\mbox{$X_\varepsilon \define X + \varepsilon\lozenge_d$}, where~$\varepsilon\lozenge_d = \left\{0, \pm
  \varepsilon e_1,\ldots,\pm \varepsilon e_d\right\}$ is the discrete
$\ell_1$-ball with radius~$\varepsilon$.
By construction, we clearly have $X \subseteq \interior{\conv(X_\varepsilon)}$.

\begin{definition}
  Let~$X \subseteq \Z^d$ be lattice-convex and let~$\varepsilon > 0$.
  A polyhedron~$Q \subseteq \R^d$ is called an
  \emph{$\varepsilon$-relaxation} of~$X$ if $X_\varepsilon \subseteq Q$
  and $\interior{Q} \cap \Z^d = X$.
  The \emph{$\varepsilon$-relaxation complexity} $\rc_{\varepsilon}(X)$
  is the smallest number of facets of an~$\varepsilon$-relaxation of~$X$.
\end{definition}

Alternatively speaking, an $\varepsilon$-relaxation of~$X$ is a polyhedron~$Q$ such that $\interior{Q} \cap \Z^d = X$, and for which the minimum of $\|x-z\|_1$, over any $x \notin \interior{Q}$ and $z \in X$, is at least~$\varepsilon$.
Further, in complete analogy to~$\rc(X,Y)$, we define~$\rc_{\varepsilon}(X,Y)$ to be the smallest number of inequalities necessary to separate~$X_{\varepsilon}$ and~$Y \setminus
X_{\varepsilon}$.

Our first observation is that~$\rc_\varepsilon(X)$ always admits a \emph{finite} certificate~$Y^\varepsilon \subseteq \Z^d$.
The existence of such a finite certificate is noteworthy,
because, in general, $\rc(X)$ might not admit a finite certificate, see Theorem~\ref{thm:rc-square-simplex} below.

\begin{lemma}
  \label{lem:epsFiniteTime}
  Let~$X \subseteq \Z^d$ be lattice-convex and let $\varepsilon >0$ be such that~$X$ admits an~$\varepsilon$-relaxation.
  Then, there is an explicitly computable finite set $Y^\varepsilon \subseteq \Z^d$ such that $\rc_\varepsilon(X) = \rc_\varepsilon(X,Y^\varepsilon)$.
\end{lemma}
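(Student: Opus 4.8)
The plan is to first put the separating inequalities into a canonical normalized form and then run a compactness argument in the resulting compact parameter space. Fix $\varepsilon$ small enough that $X_\varepsilon \cap \Z^d = X$ (any $\varepsilon < 1$ works, since the fractional neighbours $x \pm \varepsilon e_j$ are non-integral), so that $Y \setminus X_\varepsilon = Y \setminus X$ for every $Y \subseteq \Z^d$. Since $X$ is full-dimensional, $X \subseteq \interior{\conv(X_\varepsilon)}$, and there are a computable $\rho > 0$ and a rational $c$ such that the Euclidean ball of radius $\rho$ around $c$ lies in $\conv(X_\varepsilon)$; hence the support function obeys $h_{X_\varepsilon}(a) \ge \sprod{a}{c} + \rho\norm{a}{2}$ for all $a$. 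Given any family $\sprod{a_i}{x} \le b_i$ that is valid on $X_\varepsilon$ and separates $X_\varepsilon$ from some set of lattice points, I would normalize to $\norm{a_i}{2} = 1$ and replace $b_i$ by the tight value $h_{X_\varepsilon}(a_i) \le b_i$. This shrinks each halfspace, so it separates weakly more lattice points and still contains $X_\varepsilon$; moreover, because each $x \in X$ lies in $\interior{\conv(X_\varepsilon)}$, every normalized inequality is strict on $X$ with a uniform margin, $\sprod{a_i}{x} \le h_{X_\varepsilon}(a_i) - \rho$.

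The core existence claim is that $\rc_\varepsilon(X, B_t)$ is nondecreasing in $t$ and bounded above by $\rc_\varepsilon(X)$, hence stabilizes to some $k^\star \le \rc_\varepsilon(X)$, and that in fact $k^\star = \rc_\varepsilon(X)$; this yields $Y^\varepsilon = B_T$ for every sufficiently large $T$. For each $t$ pick optimal separators of $B_t \setminus X$, normalized as above to $(a_i^{(t)}, h_{X_\varepsilon}(a_i^{(t)}))_{i \in [k^\star]}$ with $a_i^{(t)} \in S^{d-1}$. As $S^{d-1}$ is compact and $h_{X_\varepsilon}$ is continuous, I pass to a subsequence $t \to \infty$ along which $a_i^{(t)} \to a_i^\star \in S^{d-1}$ and $h_{X_\varepsilon}(a_i^{(t)}) \to h_{X_\varepsilon}(a_i^\star)$ for every $i$, and set $Q^\star = \setcond{x}{\sprod{a_i^\star}{x} \le h_{X_\varepsilon}(a_i^\star),\ i \in [k^\star]}$. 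Then $X_\varepsilon \subseteq Q^\star$ and, by the margin, $X \subseteq \interior{Q^\star}$. For any $z \in \Z^d \setminus X$ and all large $t$ we have $z \in B_t$, so some index separates it; by pigeonhole one fixed $i$ separates it for infinitely many $t$, and passing to the limit gives $\sprod{a_i^\star}{z} \ge h_{X_\varepsilon}(a_i^\star)$, i.e. $z \notin \interior{Q^\star}$. Thus $\interior{Q^\star} \cap \Z^d = X$, so $Q^\star$ is an $\varepsilon$-relaxation with at most $k^\star$ facets, proving $\rc_\varepsilon(X) \le k^\star$ and hence equality.

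The role of $\varepsilon > 0$ is precisely to keep this limit non-degenerate: the uniform margin $\rho$ guarantees that the limiting normals $a_i^\star$ still contain $X$ strictly in the interior, so $Q^\star$ is a genuine $\varepsilon$-relaxation and not a merely supporting configuration. This is what fails for plain $\rc(X)$, where a limit of separators of larger and larger boxes may become tangent to $\conv(X)$ and cease to separate, consistent with Theorem~\ref{thm:rc-square-simplex} showing that $\rc(X)$ need not admit a finite certificate.

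The main obstacle is converting this existence proof into an \emph{explicitly computable} threshold $T$, since the compactness step is a priori non-constructive. I would extract an explicit $T$ from a quantitative version of the argument: compute an explicit lower bound on $\rho$ from the vertices of $\conv(X_\varepsilon)$; then argue, using robustness of $\varepsilon$-relaxations under small perturbations, that every inclusion-minimal optimal separator can be taken with rational normals of explicitly bounded encoding length whose pairwise angular separation is bounded below in terms of $\rho$; a Cramer/Hadamard estimate with denominators controlled by this angular separation then bounds the coordinates of the intersection vertices, forcing the configuration into an explicit box $B_T$, after which one sets $Y^\varepsilon = B_T \cap \Z^d$ and invokes the stabilization above. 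The delicate part is exactly this uniform control of near-parallel facets, whose vertices could otherwise lie arbitrarily far from $\conv(X)$; the uniform margin $\rho$ is what should allow one either to bound the relevant dihedral angles from below or to discard redundant near-parallel inequalities, and making this quantitative is where I expect the real work to lie.
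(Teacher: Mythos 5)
Your compactness argument is sound as far as it goes, and it is a genuinely different route from the paper's: normalizing to unit normals with support-function right-hand sides, extracting convergent subsequences of optimal separators for $B_t$, and using the pigeonhole step does prove that the nondecreasing sequence $\rc_\varepsilon(X,B_t)$ stabilizes at exactly $\rc_\varepsilon(X)$, so that \emph{some} box $B_T$ is a finite certificate. (Minor point: the uniform margin should be derived from the Minkowski-sum structure, $x+\varepsilon\conv(\lozenge_d)\subseteq\conv(X_\varepsilon)$ giving $\sprod{a}{x}\le h_{X_\varepsilon}(a)-\varepsilon/\sqrt{d}$ for unit $a$, rather than from a single inscribed ball around one center $c$.) However, the lemma asserts an \emph{explicitly computable} finite set $Y^\varepsilon$, and this is precisely what your proof does not deliver: compactness plus pigeonhole gives no bound on $T$ whatsoever. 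Your last paragraph, which is supposed to supply this, is a plan rather than a proof, and it faces a real obstruction you only partially acknowledge: encoding-length bounds for separators of $X_\varepsilon$ from $B_t\setminus X_\varepsilon$ obtained by LP/vertex arguments grow with $t$, while you need bounds uniform in $t$; near-parallel facets whose intersection vertices drift to infinity are exactly the configurations you cannot a priori exclude. The explicitness is also not cosmetic: Part~\ref{epsrelaxProperties2} of Theorem~\ref{thm:epsrelaxProperties} and Theorem~\ref{thm:computingrcX-with-condition} rely on having $Y^\varepsilon$ in hand to compute $\rc_\varepsilon(X)$ in finite time, which mere existence of a certificate cannot provide.

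The missing idea is the paper's short packing argument, which produces the explicit bound directly, with no limiting process. If $Q$ is an $\varepsilon$-relaxation, $v\in X$ and $q\in Q$, then $Q$ contains the spindle $K'_{v,q}\define\conv\left((v+\varepsilon\lozenge_d)\cup\{q\}\right)$, whose symmetrization $K_{v,q}\define\conv\left((v+\varepsilon\lozenge_d)\cup\{q,2v-q\}\right)$ is centrally symmetric about the lattice point $v$ and contains a crosspolytope of volume $\tfrac{2^d}{d!}\varepsilon^{d-1}\|v-q\|_\infty$. Van der Corput's theorem ($\vol(K)\le 2^d\,\card{K\cap\Z^d}$ for convex bodies centrally symmetric about a lattice point) then forces $K'_{v,q}$ to contain at least $\tfrac{1}{2\,d!}\varepsilon^{d-1}\|v-q\|_\infty$ lattice points, while the weak separation property of $Q$ caps the number of lattice points available to such spindles at $\card{X}+1$. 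This gives the explicit bound $\|v-q\|_\infty\le 2\,d!\,(\card{X}+1)/\varepsilon^{d-1}\eqqcolon c_{d,\varepsilon,X}$ for every point $q$ of every $\varepsilon$-relaxation, hence every $\varepsilon$-relaxation lies in $\conv(X)+c_{d,\varepsilon,X}\cdot[-1,1]^d$ and $Y^\varepsilon$ can be taken to be the lattice points of this explicit polytope. Note that the role you correctly attribute to $\varepsilon$ (keeping the limit non-degenerate) is played quantitatively in the paper: the $\varepsilon$-width of the spindles makes their volume grow linearly in $\|v-q\|_\infty$, and that is what bounds the relaxation. To salvage your route you would still need this (or an equivalent) quantitative containment, at which point the compactness machinery becomes superfluous.
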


\begin{proof}
We show that there exists an explicit constant $c_{d,\varepsilon,X} > 0$ such that every $\varepsilon$-relaxation~$Q$ of $X$ is contained in $P_{X,\varepsilon} \define \conv(X) + c_{d,\varepsilon,X} \cdot [-1,1]^d$.
Then, the finite set $Y^\varepsilon \define P_{X,\varepsilon} \cap \Z^d$ is explicitly computable such that $\rc_\varepsilon(X) = \rc_\varepsilon(X,Y^\varepsilon)$.

In order to find $P_{X,\varepsilon}$, we first note that by definition $X + \varepsilon \lozenge_d \subseteq \conv(X_\varepsilon)$.
Now, let $Q$ be an arbitrary $\varepsilon$-relaxation of~$X$ and let $q \in Q$ be some point therein.
As~$Q$ weakly separates~$X$ from~$\Z^d \setminus X$, the set $\conv(X \cup \{q\})$ contains at most $\card{X}+1$ integer points.
For~$v \in X$, let
\[
K_{v,q} \define \conv\left((v + \varepsilon \lozenge_d) \cup \{q,2v-q\}\right) \ \textrm{and} \ K'_{v,q} \define \conv\left((v + \varepsilon \lozenge_d) \cup \{q\}\right).
\]
The set $K_{v,q}$ is convex and centrally symmetric around the integer point~$v$.
Moreover, $K'_{v,q} \subseteq Q$, so that we would get a contradiction if $\card{K'_{v,q} \cap \Z^d} > \card{X} + 1$.
A classical extension of Minkowski's first fundamental theorem in the Geometry of Numbers is due to van der Corput~\cite{vdcorput1936verallg} who proved that, for every convex body $K \subseteq \R^d$ that is centrally symmetric around an integer point, the inequality $\vol(K) \leq 2^d \cdot \card{K \cap \Z^d}$ holds.
Without loss of generality, assume that the $\ell_\infty$-norm $N \define \|v-q\|_\infty$ of~$v-q$ is attained at the last coordinate.
Then, $K_{v,q}$ contains the crosspolytope
\[
C_{v,q} \define \conv(\{v \pm \varepsilon e_1,\ldots,v \pm \varepsilon
e_{d-1},q,2v-q\}),
\]
and so by van der Corput's result we have
\begin{align*}
\card{K'_{v,q} \cap \Z^d} &\geq \tfrac{1}{2} \card{K_{v,q} \cap \Z^d} \geq \tfrac{1}{2^{d+1}} \vol(K_{v,q}) \geq \tfrac{1}{2^{d+1}} \vol(C_{v,q}) = \tfrac{1}{2^{d+1}} \tfrac{2^d}{d!} \varepsilon^{d-1} N.
\end{align*}
To avoid the discussed contradiction, the last term must be upper bounded by $\card{X}+1$, which translates to the inequality
\[
\|v-q\|_\infty  = N \leq \frac{2 \, d! \, (\card{X}+1)}{\varepsilon^{d-1}} \eqqcolon c_{d,\varepsilon,X}.
\]
Since $q \in Q$ and $v \in X$ were arbitrary, this indeed shows that the $\varepsilon$-relaxation~$Q$ of~$X$ is contained in $\conv(X) + c_{d,\varepsilon,X} \cdot [-1,1]^d = P_{X,\varepsilon}$, as desired.
\end{proof}

Note that $\rc_\varepsilon(X) \geq \rc_{\varepsilon'}(X)$, for every $\varepsilon \geq \varepsilon' > 0$.
Hence, the parameter
\[
  \rcu(X) \define \min_{\varepsilon > 0} \rc_\varepsilon(X)
\]
is well-defined.
The main advantage of~$\rcu(X)$ and the previously defined~$\rcl(X)$ is that they are limits of explicitly computable numbers which sandwich~$ \rc(X)$:

\begin{theorem}
\label{thm:epsrelaxProperties}
Let $X \subseteq \Z^d$ be lattice-convex and full-dimensional.
\begin{enumerate}
 \item\label{epsrelaxProperties2} If $\varepsilon > 0$ is rational, then $\rc_\varepsilon(X)$ can be computed in finite time.
 \item\label{epsrelaxProperties2b} For every $t \in \Z_{>0}$, the number $\rc(X,B_t)$ can be computed in finite time.
 \item\label{epsrelaxProperties3} One has
  \begin{align}
  \rcl(X) \leq \rc(X) \leq \rc_\Q(X) = \rcu(X).\label{eqn:inequalities-bounds}
  \end{align}
\end{enumerate}
\end{theorem}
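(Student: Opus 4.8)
The plan is to treat the three claims separately, deriving the first two from the finiteness results already in place and reserving the real work for the chain of (in)equalities in the third.

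For \cref{epsrelaxProperties2}, I would invoke \cref{lem:epsFiniteTime}: since $\varepsilon$ is rational and $X$ is finite, both $X_\varepsilon = X + \varepsilon\lozenge_d$ and the explicitly computable set $Y^\varepsilon = P_{X,\varepsilon}\cap\Z^d$ consist of finitely many rational points, and $\rc_\varepsilon(X) = \rc_\varepsilon(X,Y^\varepsilon)$. Computing $\rc_\varepsilon(X,Y^\varepsilon)$ then amounts to separating the two finite rational point sets $X_\varepsilon$ and $Y^\varepsilon\setminus X_\varepsilon$ by as few halfspaces as possible. For each fixed candidate number $m$ of halfspaces this is a finite feasibility question — guess, for every point to be excluded, which of the $m$ inequalities it violates, and solve the resulting linear feasibility problem — so it is decidable by the mixed-integer programming model discussed in \cref{sec:MIP}. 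Incrementing $m$ from $1$, and using that $\card{Y^\varepsilon\setminus X_\varepsilon}$ halfspaces suffice whenever a separation exists at all, bounds the search and yields termination. \Cref{epsrelaxProperties2b} is the same argument with $Y^\varepsilon$ replaced by the finite integer set $B_t$: here $X$ and $B_t\setminus X$ are finite, each $z\in B_t\setminus X$ satisfies $z\notin\conv(X)$ by lattice-convexity, and the identical enumeration scheme computes $\rc(X,B_t)$ in finite time.

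For \cref{epsrelaxProperties3} I would establish the four relations in turn. The bound $\rcl(X)\le\rc(X)$ is immediate: any relaxation of $X$ separates $X$ from all of $\Z^d\setminus X$, hence from $B_t\setminus X$, so $\rc(X,B_t)\le\rc(X)$ for every $t$ and taking the maximum gives the claim. The bound $\rc(X)\le\rc_\Q(X)$ holds because every rational relaxation is in particular a relaxation. For $\rcu(X)\le\rc_\Q(X)$, I would start from an optimal rational relaxation $Q=\{x : a_i^{\intercal} x\le b_i,\ i\in[m]\}$ with $m=\rc_\Q(X)$ facets; a recession-cone argument shows that $Q$ is bounded, since a nontrivial rational recession direction together with a point of $X$ would force infinitely many lattice points into $Q$. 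Relaxing each inequality to $a_i^{\intercal} x\le b_i+\delta$ keeps $Q\cap\Z^d=X$ for all small $\delta>0$ (the finitely many excluded lattice points, bounded away because $Q$ is a polytope, retain positive slack) while pushing $X$ into the interior; choosing then $\varepsilon>0$ small enough that $X_\varepsilon$ fits inside this enlarged polytope produces an $\varepsilon$-relaxation with at most $m$ facets, so $\rcu(X)\le m$.

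The crux is the reverse inequality $\rc_\Q(X)\le\rcu(X)$, a rationalization argument and the step I expect to be the main obstacle. Pick $\varepsilon^\star>0$ (which may be taken rational, using that $\rc_\varepsilon(X)$ is integer-valued and nonincreasing in $\varepsilon$) with $\rc_{\varepsilon^\star}(X)=\rcu(X)\eqqcolon m$, and let $Q=\{x : a_i^{\intercal} x\le b_i,\ i\in[m]\}$ be an $\varepsilon^\star$-relaxation with $m$ facets; by \cref{lem:epsFiniteTime} it lies in the bounded set $P_{X,\varepsilon^\star}$, so only finitely many lattice points are relevant. The robustness built into an $\varepsilon^\star$-relaxation yields, for each $v\in X$, the slack $a_i^{\intercal} v\le b_i-\varepsilon^\star\|a_i\|_\infty$, so shrinking every facet inward by a small amount keeps $X$ in the interior while turning the merely nonstrict exclusion $a_i^{\intercal} z\ge b_i$ of each lattice point $z\notin X$ into a strict one. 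With all finitely many incidence conditions now strict, I would replace the $a_i,b_i$ by sufficiently close rationals $\tilde a_i,\tilde b_i$: strictness is an open condition and is preserved, the polytope stays bounded so no far-away lattice point can enter, and the resulting rational polyhedron $\tilde Q$ satisfies $\tilde Q\cap\Z^d=X$, i.e.\ it is a genuine rational relaxation with at most $m$ facets. Hence $\rc_\Q(X)\le m=\rcu(X)$, which together with the previous paragraph gives $\rc_\Q(X)=\rcu(X)$. The delicate points to get right are the boundedness of both kinds of optimal relaxations and the bookkeeping ensuring that one uniform perturbation simultaneously preserves every inclusion and every exclusion.
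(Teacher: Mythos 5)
Your proposal is correct and follows essentially the same route as the paper: parts 1--2 reduce to \cref{lem:epsFiniteTime} plus a finite mixed-integer program separating two finite (rational) point sets, and part 3 establishes the same four relations via the same key ideas --- boundedness of a rational relaxation through its (rational) recession cone, slack enlargement to produce an $\varepsilon'$-relaxation, and boundedness of $\varepsilon$-relaxations (from \cref{lem:epsFiniteTime}) plus a small rational perturbation for the reverse inequality. The only difference is that you spell out the perturbation bookkeeping (shrinking facets to turn weak exclusions into strict ones before rationalizing, and a continuity argument to keep far lattice points out), which the paper compresses into ``perturb slightly.''
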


\begin{proof}
Lemma~\ref{lem:epsFiniteTime} provides us with a finite and explicitly computable set $Y^\varepsilon \subseteq \Z^d$ such that $\rc_\varepsilon(X) = \rc_\varepsilon(X,Y^\varepsilon)$.
If $\varepsilon \in \Q$, then $X_\varepsilon \subseteq \Q^d$, which means that we ask to minimally separate the finite rational sets $X_\varepsilon$ and $Y^\varepsilon \setminus X_\varepsilon$ from one another.
The arguments in~\cite[Prop.~4.9]{averkovschymura2020complexity} show how this can be done via a bounded mixed-integer program and Claim~\ref{epsrelaxProperties2} follows.
As $B_t$ is finite, the same can be done for $\rc(X,B_t)$ implying Claim~\ref{epsrelaxProperties2b}.

For Claim~\ref{epsrelaxProperties3}, observe that for every $t \in \Z_{>0}$ we clearly have $\rc(X,B_t) \leq \rc(X)$, and therefore $\rcl(X) \leq \rc(X) \leq \rc_\Q(X)$.
We thus need to show $\rc_\Q(X) = \rcu(X)$.
For one inequality, let~$Q \subseteq \R^d$ be a rational relaxation of~$X$ having~$\rc_\Q(X)$
facets and facet description~$Ax \leq b$.
Since~$Q$ is a rational polyhedron, $Q$ is necessarily bounded (otherwise
it would contain infinitely many integral points).
Thus, there exists~$\delta > 0$ such that for each~$y \in \Z^d \setminus
X$ there exists an inequality~$\sprod{a}{x} \leq \beta$ in~$Ax \leq b$
with~$\sprod{a}{y} \geq \beta + \delta$.
Consequently, we can increase~$\beta$ slightly to get another
relaxation~$Q'$ with~$\rc_\Q(X)$ facets.
This shows that there exists~$\varepsilon' > 0$ such that~$X_{\varepsilon'} \subseteq Q'$ and $\interior{Q'} \cap \Z^d = X$, i.e., $Q'$ is an~$\varepsilon'$-relaxation of~$X$, and thus $\rcu(X) \leq \rc_{\varepsilon'}(X) \leq \rc_\Q(X)$.

For the reverse inequality, we fix an $\varepsilon > 0$ and note that every $\varepsilon$-relaxation of~$X$ is bounded (see the proof of Lemma~\ref{lem:epsFiniteTime}).
We may thus perturb any such $\varepsilon$-relaxation slightly into a rational relaxation of~$X$ with equally many facets.
As a result, we get $\rc_\Q(X) \leq \rc_\varepsilon(X)$, for every $\varepsilon > 0$.
\end{proof}

The main message of Theorem~\ref{thm:epsrelaxProperties} is that, for every rational~$\varepsilon > 0$, the number $\rc_\varepsilon(X)$ is an explicitly computable upper bound on~$\rc(X)$ and~$\rc_\Q(X)$, and that these upper bounds converge to $\rc_\Q(X)$ with $\varepsilon \to 0$.
However, without further information we do not know whether the computed value $\rc_\varepsilon(X)$ agrees with $\rc_\Q(X)$.
A situation in which we are sure when to stop computing $\rc_\varepsilon(X)$ for decreasing values of $\varepsilon > 0$ is when we can also compute an eventually matching lower bound:

\begin{theorem}
\label{thm:computingrcX-with-condition}
  Let~$X \subseteq \Z^d$ be lattice-convex and full-dimensional.
  If $\rcl(X) = \rc_{\Q}(X)$, then there is a finite algorithm that computes~$\rc(X)$.
\end{theorem}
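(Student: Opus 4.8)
The plan is to exploit the two-sided sandwich $\rcl(X) \leq \rc(X) \leq \rc_\Q(X) = \rcu(X)$ established in \Cref{thm:epsrelaxProperties}. Under the hypothesis $\rcl(X) = \rc_\Q(X)$ this chain collapses, so that $\rcl(X) = \rc(X) = \rcu(X)$ all coincide. By their very definitions, $\rcl(X)$ and $\rcu(X)$ are limits of the numbers $\rc(X,B_t)$ and $\rc_\varepsilon(X)$, each of which is computable in finite time by the first two parts of \Cref{thm:epsrelaxProperties} (the latter for rational $\varepsilon$). My algorithm will therefore compute increasingly good lower bounds $\rc(X,B_t)$ and upper bounds $\rc_\varepsilon(X)$ in parallel, halting as soon as the two agree.

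The first step is to record the unconditional per-instance sandwich: for every $t \in \Z_{>0}$ and every $\varepsilon > 0$,
\[
\rc(X,B_t) \;\leq\; \rc(X) \;\leq\; \rc_\varepsilon(X).
\]
Here the left inequality is the trivial $\rc(X,Y) \leq \rc(X)$ with $Y = B_t$, while the right one follows from $\rc(X) \leq \rc_\Q(X) = \rcu(X) \leq \rc_\varepsilon(X)$, the last step being the definition of $\rcu(X)$ as a minimum over $\varepsilon$. This sandwich is what makes the algorithm correct: whenever a computed lower bound equals a computed upper bound, that common value is forced to be $\rc(X)$, independently of the hypothesis.

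The second step is the dovetailing itself. For $k = 1,2,3,\dots$ I would compute $\ell_k \define \rc(X,B_k)$ and $u_k \define \rc_{2^{-k}}(X)$ -- both finite-time evaluations, since $2^{-k}$ is rational -- and output the common value as $\rc(X)$ once $\ell_k = u_k$. Termination is exactly where the hypothesis is used. The sequence $\rc(X,B_t)$ is non-decreasing in $t$, integer-valued, and bounded above by $\rc(X)$, hence eventually constant equal to $\rcl(X)$, say for all $t \geq T$. Likewise $\rc_\varepsilon(X)$ is non-increasing as $\varepsilon \downarrow 0$, integer-valued, and bounded below, hence constant equal to $\rcu(X)$ for all $\varepsilon$ below some $\varepsilon_0 > 0$. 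Picking $K \geq T$ with $2^{-K} \leq \varepsilon_0$ yields $\ell_K = \rcl(X)$ and $u_K = \rcu(X)$, and the assumption $\rcl(X) = \rc_\Q(X) = \rcu(X)$ forces $\ell_K = u_K$. Thus the loop halts by iteration $K$ at the latest.

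I do not expect a genuinely hard step here: the result is a stabilization-plus-dovetailing argument resting entirely on facts already available from \Cref{thm:epsrelaxProperties}. The point requiring the most care is the indispensable role of the hypothesis. Without $\rcl(X) = \rc_\Q(X)$, both integer sequences still stabilize individually, but possibly at distinct values $\rcl(X) < \rcu(X)$; the algorithm would then never detect equality, and indeed there would be no way to certify that the current bounds already determine $\rc(X)$. It is precisely the collapse of the sandwich that guarantees that the lower and upper bounds provably meet. A minor auxiliary observation is that the minimum defining $\rcu(X)$ is attained -- so that $\rcu(X)$ genuinely occurs among the $u_k$ -- which holds because a non-increasing integer sequence that is bounded below is eventually constant.
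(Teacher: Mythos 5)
Your proposal is correct and follows essentially the same route as the paper: both arguments dovetail the finite-time computations of the lower bounds $\rc(X,B_t)$ and the upper bounds $\rc_{\varepsilon}(X)$ for rational $\varepsilon \downarrow 0$, use the integer monotone stabilization of both sequences together with the hypothesis $\rcl(X)=\rc_\Q(X)$ to guarantee the bounds eventually coincide, and invoke the sandwich $\rc(X,B_t) \le \rc(X) \le \rc_\varepsilon(X)$ for correctness of the stopping criterion. Your write-up merely makes explicit the stopping rule and the attainment of the minimum defining $\rcu(X)$, which the paper leaves implicit.
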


\begin{proof}
  Let~$(\varepsilon_t)_{t \in \Z_{>0}} \subseteq \Q$ be a rational strictly decreasing null
  sequence.
  Then, by Theorem~\ref{thm:epsrelaxProperties}, Part~\ref{epsrelaxProperties3}, there is~$t' \in \Z_{>0}$ such that~$\rc_\Q(X) =
  \rc_{\varepsilon_{t}}(X)$ for every~$t \geq t'$.
  If~$\rcl(X) = \rc_\Q(X)$, there exists~$t^\star \in \Z_{>0}$ such
  that~$\rc_\Q(X) = \rc(X, B_t)$, for every $t \geq t^\star$.
  Thus, for $t = \max\{t', t^\star\}$, we get $\rc(X, B_t) =
  \rc_{\varepsilon_t}(X)$.
  Since both quantities can be computed in finite time due to
  Theorem~\ref{thm:epsrelaxProperties}, the assertion
  follows.  
\end{proof}

Our constructions suggest infinite iterative procedures that produce sequences of integer values converging to $\rcl(X)$ and $\rc_\Q(X)$ after finitely many steps.
However, the existence of such procedures per se does not resolve the computability of $\rcl(X)$ or $\rc_\Q(X)$.
For computability, one would additionally need to be able to decide when
the integer sequence achieves the value it finitely converges to.

\subsection{Hiding sets and hiding graphs}
\label{sect:hiding-sets}

Let~$X \subseteq \Z^d$ be lattice-convex.
To derive a lower bound on~$\rc(X)$, Kaibel \&
Weltge~\cite{kaibelweltge2015lowerbounds} introduced the notion of hiding
sets.
A set~$H \subseteq (\aff(X) \cap \Z^d) \setminus X$ is called a \emph{hiding set} for~$X$ if, for all distinct~\mbox{$x,y \in H$}, we have~$\conv(\{x,y\})
\cap \conv(X) \neq \emptyset$.
No valid inequality for~$X$ can separate any such~$x$ and~$y$ simultaneously,
showing that
\begin{align}
\rc(X) \geq \card{H}, \quad \textrm{for every hiding set } H \textrm{ for }X.\label{eqn:hiding-set-bound}
\end{align}
Kaibel \& Weltge used this bound to show that the set of integer points in the traveling salesman polytope has exponential relaxation complexity (see~\cite[Thm.~3]{kaibelweltge2015lowerbounds}).
We refer to the best possible lower bound based on hiding sets as the
\emph{hiding set bound} of~$X$,
\[
  H(X) \define \max \{ \card{H} \st H \text{ is a hiding set for } X\}.
\]
It is not immediate how to explicitly compute this number, because~$X$
might admit infinitely many hiding sets.
However, a maximum size hiding set can always be found
among the observers of~$X$.

\begin{lemma}
\label{lem:hiding-sets-in-observers}
Let $X \subseteq \Z^d$ be lattice-convex, let $H$ be a hiding set for~$X$, and assume that there exists an $h \in H \setminus \obs(X)$.
Then, for every $y \in \obs(X) \cap \conv(\{h\} \cup X)$, the set $H \setminus \{h\} \cup \{y\}$ is a hiding set for~$X$ as well.

In particular, there is a maximum size hiding set contained in~$\obs(X)$.
\end{lemma}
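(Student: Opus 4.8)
The plan is to show the substitution of $h$ by an observer $y$ lying in $\conv(\{h\} \cup X)$ preserves the hiding-set property, and then iterate this replacement to push all elements of $H$ into $\obs(X)$ without decreasing the cardinality. The crux is the following geometric observation: every point of $(\aff(X) \cap \Z^d)\setminus X$ ``sees'' $\conv(X)$ through some observer that screens it. I first need to locate a suitable $y$: among the integer points on the segment from $h$ toward $\conv(X)$, the last one outside $\conv(X)$ before entering it should be an observer. So before the replacement argument I would verify that $\obs(X) \cap \conv(\{h\}\cup X)$ is nonempty; this is where finiteness and lattice-convexity of $X$ enter, guaranteeing $\conv(\{h\}\cup X)\cap\Z^d$ is a finite set properly containing $X$, and its ``outermost'' integer points relative to $X$ are observers by the definition of $\obs(X)$.

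For the replacement itself, let $y \in \obs(X)\cap\conv(\{h\}\cup X)$ and set $H' \define (H\setminus\{h\})\cup\{y\}$. I must check the hiding condition for all distinct pairs in $H'$. Pairs not involving $y$ are inherited from $H$, so the only new requirement is that for each $x \in H\setminus\{h\}$ one has $\conv(\{x,y\})\cap\conv(X)\neq\emptyset$. The key idea is that $y \in \conv(\{h\}\cup X)$ means $y$ lies ``between'' $h$ and $\conv(X)$, so any segment joining $x$ to $\conv(X)$ through $h$ can be rerouted through $y$. Concretely, I would write $y = \lambda h + (1-\lambda) w$ for some $w \in \conv(X)$ and $\lambda \in [0,1]$, and use the original crossing point $p \in \conv(\{x,h\})\cap\conv(X)$ to exhibit a point of $\conv(\{x,y\})$ inside $\conv(X)$. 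The convexity bookkeeping here is the technical heart: one forms an explicit convex combination of $x$, $y$, $w$, and $p$ and argues it lands in both $\conv(\{x,y\})$ and $\conv(X)$.

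Once the single replacement is justified, the ``in particular'' statement follows by a finite induction: as long as $H$ contains an element outside $\obs(X)$, apply the replacement to obtain a new hiding set of the same size with strictly more elements inside $\obs(X)$ (noting $y$ is a genuine observer and the procedure does not reintroduce non-observers, after also ensuring $y \notin H\setminus\{h\}$, handling the degenerate case where the candidate observer coincides with an existing element by simply arguing that $H$ was then already improvable). Since $H$ is finite, this terminates with a hiding set of size $\card{H}$ contained in $\obs(X)$; applying it to a maximum hiding set yields one inside $\obs(X)$.

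The step I expect to be the main obstacle is verifying the new crossing condition $\conv(\{x,y\})\cap\conv(X)\neq\emptyset$ for the pairs created by the substitution, since the naive intuition that ``$y$ is closer to $\conv(X)$ than $h$'' must be turned into an exact convex-combination identity that produces the required intersection point; a clean choice of $w$ and careful tracking of the barycentric weights will be needed to avoid an edge case where $y$ itself lies on the boundary of $\conv(X)$.
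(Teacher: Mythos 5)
Your central replacement step is sound and is, in substance, the paper's own argument: the paper introduces, for $z \notin \conv(X)$, the cone $C_z$ with apex~$z$ generated by~$X$ and the region $U_z = C_z \setminus \conv(\{z\}\cup X)$ of points that $\conv(X)$ blocks from~$z$, and deduces the hiding property of $H\setminus\{h\}\cup\{y\}$ from the inclusion $U_h \subseteq U_y$; unwinding that inclusion is precisely your convex-combination bookkeeping, and it does close: writing $p=\alpha x+(1-\alpha)h \in \conv(X)$ with $\alpha\in[0,1)$ and $y=\lambda h+(1-\lambda)w$ with $\lambda\in(0,1)$, $w\in\conv(X)$, the point $\mu x+(1-\mu)y$ with $\mu = \lambda\alpha/(1-\alpha+\lambda\alpha)$ is a convex combination of $p$ and $w$, hence lies in $\conv(\{x,y\})\cap\conv(X)$.

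The genuine gap lies in the auxiliary steps you need for the ``in particular'' conclusion (the displayed claim itself quantifies over all $y\in\obs(X)\cap\conv(\{h\}\cup X)$ and so needs no existence statement). Your construction of an observer inside $\conv(\{h\}\cup X)$ fails: take $X=\{0,1\}^2$ and $h=(2,3)$, which is not an observer because $(1,2)=\tfrac{1}{2}(0,1)+\tfrac{1}{2}(2,3)$ lies in $\conv(X\cup\{h\})$. The segment from $h$ to the nearest point $(1,1)$ of $\conv(X)$ contains no integer point besides $h$, so ``the last integer point outside $\conv(X)$ before entering'' is $h$ itself, not an observer; likewise the ``outermost'' integer point of $\conv(\{h\}\cup X)$ relative to~$X$ is again $h$ (a vertex of the enlarged hull), while the unique observer in this hull is the intermediate point $(1,2)$. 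The recipe is in fact circular: every observer in the hull is the last integer point on the segment through it, but not conversely, so picking a segment and taking its last integer point does not produce an observer unless you already know where one is. The correct selection is by minimality rather than outerness: choose $y$ in $\bigl(\conv(\{h\}\cup X)\cap\Z^d\bigr)\setminus X$ minimizing $\card{\conv(X\cup\{y\})\cap\Z^d}$; any integer point of $\conv(X\cup\{y\})$ outside $X\cup\{y\}$ would contradict minimality, so $y\in\obs(X)$ (this is the argument from~\cite{averkovschymura2020complexity} that the paper implicitly leans on). Separately, your handling of the degenerate case $y\in H\setminus\{h\}$ (``$H$ was then already improvable'') is not the right argument; that case simply cannot occur: since $y\in\conv(\{h\}\cup X)\setminus\conv(X)$, the segment $\conv(\{y,h\})$ misses $\conv(X)$ (a common point would force $y\in\conv(X)$ by convexity), so $y$ and $h$ cannot both belong to the hiding set~$H$ --- which is exactly how the paper justifies that the replacement preserves cardinality.
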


\begin{proof}
For a point $z \notin \conv(X)$, let $C_z \define z + \left\{\sum_{x \in X} \lambda_x(x-z) : \lambda_x \geq 0\right\}$ be the smallest convex cone with apex~$z$ containing~$X$.
Further, \mbox{$U_z \define C_z \setminus \conv(\{z\} \cup X)$} contains all points $w \notin \conv(X)$ such that $\conv(\{z,w\}) \cap \conv(X) \neq \emptyset$.
By the choice of~$y$, we have $\conv(\{y\} \cup X) \subseteq \conv(\{h\} \cup X)$, and in particular $U_h \subseteq U_y$.
This shows that indeed $H' \define H \setminus \{h\} \cup \{y\}$ is a hiding set for~$X$, and since $\conv(\{y,h\}) \cap \conv(X) = \emptyset$, we also have $\card{H'} = \card{H}$.
\end{proof}

With this in mind, we define the \emph{hiding graph}~$G(X) = (V(X), E(X))$ with node set~$V(X) = \obs(X)$, and where two distinct nodes~$x$ and~$y$
are adjacent if and only if~$\conv(\{x,y\}) \cap \conv(X) \neq \emptyset$.
Then, there is a one-to-one correspondence between hiding sets consisting
of observers and cliques in~$G(X)$.
Thus, whenever~$\obs(X)$ is finite, we can compute~$H(X)$ by solving a
maximum size clique problem in~$G(X)$.

Using this graph representation, we can strengthen the hiding
set bound by using the chromatic number~$\chi(G(X))$ of~$G(X)$.
To this end, observe that any relaxation~$Ax \leq b$ of~$X$ defines a
proper coloring of~$G(X)$ by associating with each inequality a unique
color and assigning~$y \in \obs(X)$ the color of one inequality that
separates it from~$X$.
Hence, the chromatic number of~$G(X)$ is a lower bound on~$\rc(X)$.
It is at least as strong as the hiding set bound, because the maximum
size of a clique lower bounds the chromatic number in every
undirected graph.
\begin{remark}
  Replacing~$\obs(X)$ in the definition of~$G(X)$ by a general set~$Y
  \subseteq \Z^d$, the chromatic number of the corresponding graph is a
  lower bound on~$\rc(X, Y)$.
\end{remark}

\section{On the existence of finite certificates}
\label{sec:rc-square}

In this section, we return to the parameter $\rcl(X)$ and, in particular, we focus on the question whether every
full-dimensional lattice-convex set~$X \subseteq \Z^d$ admits a finite
certificate.
The already mentioned identity~$\rc(X) = \rc(X, \obs(X))$ does not provide an answer to
this question, because~$\obs(X)$ can be infinite for~$d\geq 3$
(see~\cite[Sect.~7.5]{weltge2015diss}).
An example for which this happens is~$\Delta_3$, but interestingly there
exists a finite certificate for the fact that $\rc(\Delta_3) \geq 4$.
Indeed, computer experiments led us to a $28$-element set $Y \subseteq \Z^3 \setminus \Delta_3$ such that $\rc(\Delta_3,Y) \ge 4$,
and a computer-based proof can be found online\footnote{\url{https://github.com/christopherhojny/relaxation_complexity/tree/master/computer-aided-proofs}} using SageMath~\cite{sagemath}.
A subgraph of the hiding graph of~$Y$ that certifies this lower bound is given in Figure~\ref{fig:28-certificate-graph};
the integer points corresponding to the nodes in this graph can be found in Table~\ref{tab:certificate}.
The graph has $54$ edges, contains no triangles, is not $3$-colorable, and the removal of any of its edges leads to a graph that can be colored with~$3$ colors.

It turns out that there is nothing special about~$\Delta_3$ concerning such certificates:
By~\cite[Cor.~3.8]{averkovschymura2020complexity}, every full-dimensional $X \subseteq \Z^3$ satisfies $\rc(X) = \rc_\Q(X)$, in particular, \emph{every} relaxation of~$X$ has at least~$4$ facets.
Strengthening this result, we now want to show that there is always a finite certificate for this fact.
We need the notion of \emph{unimodular equivalence} of two sets $X,X' \subseteq \R^d$, which means that there is a unimodular matrix $U \in \Z^{d \times d}$ with $|\det(U)| = 1$ and an integer vector $t \in \Z^d$ such that $X = UX' + t$.

\begin{figure}
\centering
\begin{tikzpicture}[every node/.style={rounded corners,fill=yellow,draw=black},scale=.85]
\node (0) at (-4, -2){0};
\node (1) at (-3, -4){1};
\node (2) at (-4, 0){2};
\node (3) at (3, -1){3};
\node (4) at (3, -3){4};
\node (5) at (-4, -3){5};
\node (6) at (0, -6){6};
\node (7) at (-7, -7){7};
\node (8) at (-1, -6){8};
\node (9) at (4, -3){9};
\node (10) at (1, -6){10};
\node (11) at (7, -7){11};
\node (12) at (0, 0){12};
\node (13) at (-3, -2){13};
\node (14) at (-3, -3){14};
\node (15) at (-2, -5){15};
\node (16) at (-3, -1){16};
\node (17) at (4, 0){17};
\node (18) at (3, -2){18};
\node (19) at (0, 1){19};
\node (20) at (2, -5){20};
\node (21) at (0, 2){21};
\node (22) at (0, 3){22};
\node (23) at (3, -4){23};
\node (24) at (-4, -1){24};
\node (25) at (4, -1){25};
\node (26) at (0, 4){26};
\node (27) at (4, -2){27};

\draw[-] (0) edge (5);
\draw[-] (0) edge (13);
\draw[-] (0) edge (23);
\draw[-] (0) edge (24);
\draw[-] (1) edge (5);
\draw[-] (1) edge (7);
\draw[-] (1) edge (15);
\draw[-] (1) edge (23);
\draw[-] (1) edge (27);
\draw[-] (2) edge (12);
\draw[-] (2) edge (24);
\draw[-] (2) edge (26);
\draw[-] (3) edge (6);
\draw[-] (3) edge (11);
\draw[-] (3) edge (12);
\draw[-] (3) edge (18);
\draw[-] (3) edge (20);
\draw[-] (3) edge (25);
\draw[-] (4) edge (9);
\draw[-] (4) edge (11);
\draw[-] (4) edge (18);
\draw[-] (5) edge (14);
\draw[-] (6) edge (8);
\draw[-] (6) edge (10);
\draw[-] (6) edge (16);
\draw[-] (7) edge (14);
\draw[-] (7) edge (16);
\draw[-] (8) edge (15);
\draw[-] (8) edge (19);
\draw[-] (9) edge (23);
\draw[-] (9) edge (27);
\draw[-] (10) edge (19);
\draw[-] (10) edge (20);
\draw[-] (11) edge (23);
\draw[-] (12) edge (16);
\draw[-] (12) edge (17);
\draw[-] (13) edge (14);
\draw[-] (13) edge (16);
\draw[-] (15) edge (16);
\draw[-] (15) edge (21);
\draw[-] (15) edge (25);
\draw[-] (16) edge (24);
\draw[-] (17) edge (25);
\draw[-] (17) edge (26);
\draw[-] (18) edge (27);
\draw[-] (19) edge (21);
\draw[-] (20) edge (21);
\draw[-] (20) edge (23);
\draw[-] (20) edge (24);
\draw[-] (21) edge (22);
\draw[-] (22) edge (24);
\draw[-] (22) edge (25);
\draw[-] (22) edge (26);
\draw[-] (25) edge (27);
\end{tikzpicture} 
\caption{A non $3$-colorable graph that certifies $\rc(\Delta_3) \geq 4$.}
\label{fig:28-certificate-graph}
\end{figure}
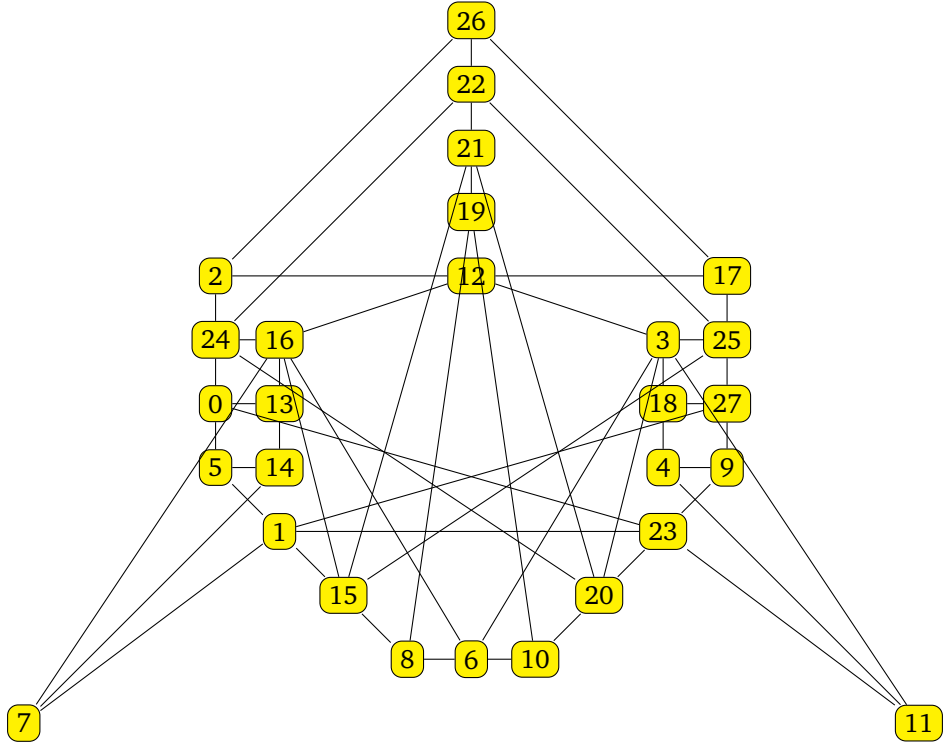

\begin{lemma}
\label{lem:rc-square-dim-3}
Let $X \subseteq \Z^3$ be a lattice-convex set that is a subset of the simplex
\[
T_3 \define \left\{ (x_1,x_2,x_3) \in \R^3 : x_1 \leq 1, x_2 \leq 1, x_3 \leq 1, x_1 + x_2 + x_3 \geq 0 \right\}
\]
and which contains $\Delta_3$.
If $\conv(\Delta_3)$ is a simplex of maximal volume in $\conv(X)$, then $X$ is unimodularly equivalent to one of the four sets corresponding to the columns of the following matrices:
\begin{align}
\left(\begin{smallmatrix*}[r]
0 & 1 & 0 & 0 \\
0 & 0 & 1 & 0 \\
0 & 0 & 0 & 1
\end{smallmatrix*}\right)\ ,\ 
\left(\begin{smallmatrix*}[c]
0 & 1 & 0 & 0 & -1 \\
0 & 0 & 1 & 0 & 0 \\
0 & 0 & 0 & 1 & 1
\end{smallmatrix*}\right)\ ,\ 
\left(\begin{smallmatrix*}[c]
0 & 1 & 0 & 0 & -1 & -1 \\
0 & 0 & 1 & 0 & 0 & 1 \\
0 & 0 & 0 & 1 & 1 & 0
\end{smallmatrix*}\right)\ ,\ 
\left(\begin{smallmatrix*}[c]
0 & 1 & 0 & 0 & -1 & 0 \\
0 & 0 & 1 & 0 & 0 & -1 \\
0 & 0 & 0 & 1 & 1 & 1
\end{smallmatrix*}\right).\label{eq:rc-square-dim-3}
\end{align}
Moreover, all these four sets satisfy $\rcl(X) = \rc(X) = 4$.
\end{lemma}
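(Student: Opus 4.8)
The plan is to prove the two assertions separately: first the classification of $X$ up to unimodular equivalence, then the evaluation $\rcl(X) = \rc(X) = 4$ for the four representatives.

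\emph{Classification.} The starting point is to turn the maximality of the unimodular simplex $\conv(\Delta_3)$ (of volume $\tfrac16$) into coordinate bounds on the points of $X$. For any $v \in X$, replacing in turn each of the four vertices $0, e_1, e_2, e_3$ by $v$ yields a lattice simplex contained in $\conv(X)$ whose volume relative to $\conv(\Delta_3)$ equals $|v_i|$ for the three facets $x_i = 0$, and $|v_1+v_2+v_3-1|$ for the facet $x_1+x_2+x_3=1$. Maximality forces each of these to be at most $1$, so that $X \subseteq R$, where $R \define \{x : -1 \le x_i \le 1,\ 0 \le x_1+x_2+x_3 \le 2\}$; note that $R \subseteq T_3$, so intersecting with $T_3$ adds nothing. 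A direct count shows $R \cap \Z^3$ consists of $16$ points, and since $R$ is convex with $R \cap \Z^3 \subseteq [-1,1]^3$, lattice-convexity of $X$ reduces to the condition that no point of $(R \cap \Z^3) \setminus X$ lies in $\conv(X)$.

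It then remains to enumerate the sets $X$ with $\Delta_3 \subseteq X \subseteq R \cap \Z^3$ that are lattice-convex and keep $\conv(\Delta_3)$ a maximal-volume lattice simplex, and to reduce modulo the group of unimodular automorphisms of $\Delta_3$, namely the symmetric group $S_4$ permuting $\{0,e_1,e_2,e_3\}$. I would organize this by the number of extra points $\card{X}-4$: the orbits of a single additional point collapse (for instance a vertex transposition sends a sum-$2$ point to a sum-$0$ point), leaving essentially one choice for $\card{X}=5$; for $\card{X}=6$ the admissible pairs fall into two orbits; and larger sets are excluded because a third extra point either destroys lattice-convexity or spans a lattice simplex of volume exceeding $\tfrac16$. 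The main obstacle here is precisely that maximality concerns \emph{all} lattice simplices in $\conv(X)$, not only those sharing three vertices with $\Delta_3$: for every surviving candidate one must additionally certify that no simplex spanned by the added points together with vertices of $\Delta_3$ is too large. This is a finite but delicate case check which, exploited together with the $S_4$-symmetry, produces exactly the four sets in~\eqref{eq:rc-square-dim-3}.

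\emph{The value $\rcl(X) = \rc(X) = 4$.} Each of the four sets is full-dimensional in $\Z^3$, so the result recalled above gives $\rc(X) \ge 4$, and $\rcl(X) \le \rc(X)$ always. It therefore suffices to establish (i) $\rc(X) \le 4$ and (ii) $\rcl(X) \ge 4$. For (i) I would exhibit an explicit four-facet relaxation for each set; since a rational relaxation is bounded (as used in the proof of Theorem~\ref{thm:epsrelaxProperties}), four facets means a tetrahedron. For $\Delta_3$ the simplex $\conv(\Delta_3)$ already works. The five-point set is a pyramid over a planar unit parallelogram, whose two-dimensional relaxation complexity is $3$; coning a triangular relaxation of that base from the apex yields a tetrahedron meeting $\Z^3$ in exactly the five prescribed points. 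For the two six-point sets one writes down a suitable tetrahedron directly and checks its lattice points — routine, but to be done by hand or verified computationally.

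For (ii), the finite certificate, I would mimic the treatment of $\Delta_3$: locate a finite set $Y \subseteq \obs(X)$ (using Lemma~\ref{lem:hiding-sets-in-observers} and Section~\ref{sect:hiding-sets}) whose hiding graph $G[Y]$ is not $3$-colorable. Then, choosing $t$ with $Y \subseteq B_t$, one obtains $\rcl(X) \ge \rc(X,B_t) \ge \rc(X,Y) \ge \chi(G[Y]) \ge 4$. Exactly as for $\Delta_3$ in Figure~\ref{fig:28-certificate-graph}, such a non-$3$-colorable (and typically triangle-free) subgraph can be found among finitely many observers and verified computationally. I expect this last step to be the real crux: the bound $\rc(X) \ge 4$ is already known, but promoting it to a \emph{finite} certificate $\rcl(X) \ge 4$ is exactly the phenomenon at stake in this section, and—just as for $\Delta_3$—it cannot be reached through cliques (hiding sets) alone but genuinely needs the chromatic-number strengthening on an explicit finite observer set. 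Combining (i) and (ii) yields $4 \le \rcl(X) \le \rc(X) \le 4$.
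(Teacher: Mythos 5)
Your proposal follows the same skeleton as the paper's proof: reduce the classification to a finite enumeration of candidate sets sandwiched between $\Delta_3$ and a fixed finite point configuration, identify representatives up to unimodular equivalence, and then certify $\rcl(X)=\rc(X)=4$ for each representative by exhibiting (a) a tetrahedral relaxation and (b) a finite certificate for the lower bound. The difference is in execution: the paper is almost entirely computer-aided (it enumerates subsets $S$ of the $16$ non-$\Delta_3$ lattice points of $T_3$ subject to the condition that every tetrahedron with vertices in $\Delta_3\cup S$ has volume $\tfrac{1}{6}$, finds the $40$ surviving candidates and collapses them via affine normal forms in SageMath, and verifies the $\rc$ values with its MIP implementation for $\rc(X,Y)$), whereas you hand-derive the sharper a-priori restriction $X\subseteq R$ via replacement-simplex volumes (correct: the relative volumes are $|v_1|,|v_2|,|v_3|,|v_1+v_2+v_3-1|$, and $R\cap\Z^3$ indeed has $16$ points, only $12$ of them outside $\Delta_3$), organize the enumeration by the $S_4$-symmetry (your single-orbit claim for one extra point checks out), and give a genuinely hand-checkable tetrahedral relaxation of the five-point set via the pyramid-over-a-unit-parallelogram observation. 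These are worthwhile simplifications, but be aware that you defer exactly the same delicate finite checks that the paper discharges by computer: the enumeration of admissible pairs into two orbits, the exclusion of a third extra point, the tetrahedra for the two six-point sets, and the finite lower-bound certificates.

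One methodological caveat on your step (ii): you propose certificates of the form ``finite $Y\subseteq\obs(X)$ whose hiding graph is not $3$-colorable.'' This is sufficient when it works (as for $\Delta_3$ in Figure~\ref{fig:28-certificate-graph}), but its existence is not guaranteed a priori: the chromatic number of the hiding graph on $Y$ only bounds $\rc(X,Y)$ from below, so it is conceivable that some finite $Y$ satisfies $\rc(X,Y)\ge 4$ while every finite hiding subgraph is $3$-colorable. The paper sidesteps this by directly computing $\rc(X,Y)=4$ for explicit finite sets $Y$ with its MIP implementation, which is a complete decision method once a suitable $Y$ has been guessed; you should keep that as the fallback in case the chromatic route fails for one of the three non-simplex representatives.
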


\begin{proof}
The $20$ lattice points in the tetrahedron~$T_3$ correspond to the columns of the following matrix:
\[
\left(\begin{smallmatrix*}[c]
0 & 1 & 0 & 0 & -2 & -1 & -1 & -1 & 0 & 0 & 0 & 1 & 1 & 1 & 1 & 1 & 1 & 1 & 1 & 1 \\
0 & 0 & 1 & 0 & 1 & 0 & 1 & 1 & -1 & 1 & 1 & -2 & -1 & -1 & 0 & 0 & 1 & 1 & 1 & 1 \\
0 & 0 & 0 & 1 & 1 & 1 & 0 & 1 & 1 & -1 & 1 & 1 & 0 & 1 & -1 & 1 & -2 & -1 & 0 & 1
\end{smallmatrix*}\right).
\]
Every lattice-convex set $X \subseteq T_3$ such that $\conv(X)$ contains $\conv(\Delta_3)$ as a simplex of maximal volume corresponds to a subset~$S$ of the last $16$ columns of that matrix such that every tetrahedron with vertices in $\Delta_3 \cup S$ has volume $1/6$.
Using an implementation\footnote{\url{https://github.com/christopherhojny/relaxation_complexity/blob/master/computer-aided-proofs/3d-unimodular-sets.sage}} in SageMath~\cite{sagemath}, we find that every such subset $S$ can contain at most~$2$ elements.
This leaves us with $10 + 30 = 40$ candidates for~$S$, each of which corresponding to a lattice polytope~$P_S = \conv(\Delta_3 \cup S)$ with $5$ or $6$ vertices, respectively.
Finally, we use that two of the polytopes $P_S$, $P_{S'}$ are unimodularly equivalent if and only if their \emph{affine normal form} coincides.
We refer to~\cite[Sect.~4]{averkovborgersoprunov2020classification} for the definition of this concept and a proof of this statement.
The authors of that paper also provide an implementation in SageMath of the affine normal form which is available online\footnote{\url{https://github.com/christopherborger/mixed_volume_classification/blob/master/polytopes.sage}}.

The fact that all the four thus obtained sets $X$ satisfy $\rcl(X) = \rc(X) = 4$ can be certified by (a) corresponding finite certificates~$Y$ that prove the necessity of four facets in each relaxation, and (b) an explicit relaxation of~$X$ by a tetrahedron.
Just as for the standard simplex $\Delta_3$, our implementation for
$\rc(X,Y)$ allows us to do just that (see \url{https://github.com/christopherhojny/relaxation_complexity/blob/master/computer-aided-proofs/verify_rc_representatives.sage} for the precise data).
\end{proof}

\begin{table}
  \centering
  \caption{A finite certificate for $\rcl(\Delta_3) \geq 4$.}
  \label{tab:certificate}
  \begin{tabular*}{\textwidth}{@{}lrrr@{\;\;\;\;\;\extracolsep{\fill}}rrrrr@{\;\;\;\;\;\extracolsep{\fill}}rrrrr@{\;\;\;\;\;\extracolsep{\fill}}rrrrr@{}}\toprule
    ID & \multicolumn{3}{c}{coordinates} && ID & \multicolumn{3}{c}{coordinates} && ID & \multicolumn{3}{c}{coordinates} && ID & \multicolumn{3}{c}{coordinates}\\
    \midrule
    0  &  0 & 1 &  1 &&  7 & -1 &  0 &  0 && 14 &  1 &  1 &  1 && 21 &  2 & -1 &  0\\
    1  &  1 & 0 &  1 &&  8 &  2 &  0 &  0 && 15 & -1 &  1 &  0 && 22 & -1 &  2 &  0\\
    2  &  0 & 2 &  0 &&  9 &  0 &  0 &  2 && 16 &  1 &  1 &  0 && 23 &  1 &  0 & -1\\ 
    3  &  1 & 1 & -1 && 10 &  2 &  0 & -1 && 17 &  0 &  2 & -1 && 24 &  1 & -1 &  0\\
    4  &  1 & 1 & -2 && 11 & -1 &  0 &  2 && 18 &  0 & -1 &  2 && 25 &  1 & -1 &  1\\
    5  &  0 & 0 & -1 && 12 &  0 & -1 &  1 && 19 & -2 &  1 &  1 && 26 &  1 & -2 &  1\\
    6  & -1 & 0 &  1 && 13 &  0 & -1 &  0 && 20 & -1 &  1 &  1 && 27 &  0 &  1 & -1\\
    \bottomrule
  \end{tabular*}
\end{table}

\begin{proposition}
\label{prop:finite-certificate-dim3}
Any full-dimensional lattice-convex $X \subseteq \Z^3$ satisfies $\rcl(X) \geq 4$.
\end{proposition}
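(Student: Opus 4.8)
The plan is to reduce an arbitrary full-dimensional lattice-convex $X \subseteq \Z^3$ to the situation already analyzed in Lemma~\ref{lem:rc-square-dim-3}, using a maximal-volume simplex as an anchor. Concretely, I would first choose four points $v_0,\dots,v_3 \in X$ spanning a lattice simplex $\Sigma = \conv\{v_0,\dots,v_3\}$ of maximal volume among all simplices with vertices in~$X$; this exists since $X$ is finite and full-dimensional. The key geometric input is the classical fact that a maximal-volume inscribed simplex controls the whole body: $\conv(X)$ is contained in the image $\Sigma'$ of $\Sigma$ under the homothety of ratio $-3$ about its centroid, whose facets are parallel to those of $\Sigma$ and each pass through the opposite vertex of~$\Sigma$. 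This confines $X$ to a bounded, explicitly describable neighborhood of~$\Sigma$.

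Next I would split according to $\vol(\Sigma)$. If $\Sigma$ is unimodular, i.e.\ $\vol(\Sigma) = \tfrac16$, then after a unimodular transformation I may assume its vertices are $0,e_1,e_2,e_3$, so that $\Sigma = \conv(\Delta_3)$. A direct computation then shows that the homothet $\Sigma'$ is exactly the simplex $T_3$ of Lemma~\ref{lem:rc-square-dim-3} (the facet of $\Sigma'$ parallel to $\conv\{e_1,e_2,e_3\}$ passes through the origin and gives $x_1+x_2+x_3 \ge 0$, while the three facets through $e_i$ give $x_i \le 1$). Hence $\Delta_3 \subseteq X \subseteq T_3 \cap \Z^3$ with $\conv(\Delta_3)$ a maximal-volume simplex in $\conv(X)$, so the hypotheses of Lemma~\ref{lem:rc-square-dim-3} are met and I conclude $\rcl(X) = 4 \ge 4$ directly. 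In this case $X$ is automatically one of finitely many sets, since $T_3$ contains only $20$ lattice points.

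The remaining case is that $\Sigma$ is non-unimodular, $\vol(\Sigma) \ge \tfrac13$; this genuinely occurs (for example $X$ may be the vertex set of an empty lattice simplex of large volume, such as a Reeve simplex), and it is the main obstacle. Such simplices form an infinite family, so the finite enumeration underlying Lemma~\ref{lem:rc-square-dim-3} is no longer available and one needs a uniform argument. Here I would aim to produce a hiding set of size~$4$ outright: exploiting the four facet hyperplanes of~$\Sigma$ together with the slack guaranteed by $\vol(\Sigma) \ge \tfrac13$, I would locate four integer observers, one just beyond each facet of $\conv(X)$, and verify that the segment joining any two of them meets~$\conv(X)$. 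By the hiding set bound~\eqref{eqn:hiding-set-bound} this yields $\rc(X) \ge 4$, and since the certificate consists of only four points it lies in some box~$B_t$, so $\rcl(X) \ge 4$. The delicate part — and where I expect the real work to be — is showing that non-unimodularity, in contrast to the tight standard simplex $\Delta_3$ whose maximal hiding set has size only~$3$, always provides enough room to realize these four pairwise-crossing observers simultaneously.
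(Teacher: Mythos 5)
Your unimodular case is sound and matches the paper's use of Lemma~\ref{lem:rc-square-dim-3}, but the non-unimodular case is a genuine gap, and you have essentially flagged it yourself: you do not construct the four pairwise-crossing observers, you only express the hope that non-unimodularity ``provides enough room'' for them. Nothing in the proposal substantiates this, and the approach is doubtful on its face. Note that already for $\Delta_3$ every hiding set has at most $3$ elements (cf.~\cite[Prop.~8.2.4]{weltge2015diss}), so the bound $\rc(X)\geq 4$ cannot in general be certified by a hiding set; the paper's own certificate for $\rc(\Delta_3)\geq 4$ is a $28$-point set whose hiding graph is triangle-free but not $3$-colorable, i.e.\ a chromatic-number certificate rather than a clique certificate. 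Your plan would therefore need hiding sets of size $4$ to exist for \emph{every} non-unimodular configuration (e.g.\ for the vertex set of a Reeve simplex of large volume), a claim that is unproven and would require substantial new work even if true.

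The idea you are missing is the paper's sublattice argument, which handles both cases uniformly and avoids hiding sets altogether. Writing the maximal-volume simplex as $S_X = B\conv(\Delta_3)$ with $B = (b_1,b_2,b_3) \in \Z^{3\times 3}$ and $b_0 = \zero$, one passes to the sublattice $\Lambda = B\Z^3 \subseteq \Z^3$. With respect to $\Lambda$, the simplex $S_X$ \emph{is} unimodular, and since $\conv(X) \subseteq BT_3$ (the Lagarias--Ziegler containment you also invoke), the set $X \cap \Lambda = BX'$ with $X'$ one of the four sets classified in Lemma~\ref{lem:rc-square-dim-3}. Now if $Q$ is any relaxation of $X$ within $\Z^3$, then
\[
Q \cap \Lambda = (Q \cap \Z^3) \cap \Lambda = X \cap \Lambda = BX',
\]
so $Q$ is automatically a relaxation of $BX'$ within the coarser lattice $\Lambda$, whence $\rcl(X) \geq \rcl(BX',\Lambda) = \rcl(X',\Z^3) = 4$ by the last assertion of Lemma~\ref{lem:rc-square-dim-3}. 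In other words, non-unimodularity of $S_X$ with respect to $\Z^3$ is irrelevant: one simply forgets the integer points outside $\Lambda$, and the finite certificates produced by Lemma~\ref{lem:rc-square-dim-3} transfer along the lattice isomorphism $\Z^3 \cong \Lambda$. Your case distinction on $\vol(\Sigma)$ is thus unnecessary, and the ``delicate part'' you anticipated dissolves entirely.
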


\begin{proof}
Let $b_0,b_1,b_2,b_3 \in X$ be the vertices of a simplex $S_X \define \conv(\{b_0,b_1,b_2,b_3\})$ of maximal volume in $\conv(X)$.
We may assume w.l.o.g.\ that $b_0 = \zero$ and we write $B=(b_1,b_2,b_3) \in \Z^{3 \times 3}$, so that $S_X = B \conv(\Delta_3)$.
It is a well-known result that as $S_X$ is volume-maximal, we have $\conv(X) \subseteq B T_3$, where $T_3$ is the simplex in Lemma~\ref{lem:rc-square-dim-3} (cf.~\cite[Thm.~3]{lagariasziegler1991bounds}).
Now, let $\Lambda = B \Z^3 \subseteq \Z^3$ be the lattice spanned by $b_1,b_2,b_3$.
In view of Lemma~\ref{lem:rc-square-dim-3}, $\Lambda \cap X = B X'$, where $X'$ is unimodularly equivalent to one of the four sets consisting of the columns of either of the matrices in~\eqref{eq:rc-square-dim-3}.

Now, if $Q$ is a relaxation of $X$ within $\Z^3$, then $Q \cap \Lambda = (Q \cap \Z^3) \cap \Lambda = X \cap \Lambda = B X'$.
Thus, $Q$ is a relaxation of $B X'$ within $\Lambda$.
This implies the relation $\rcl(X) \geq \rcl(BX',\Lambda) = \rcl(X',\Z^3) =
4$ as desired.
\end{proof}

Note that this result does not provide us with a finite certificate for the \emph{exact} value of $\rc(X)$, for a given set $X \subseteq \Z^3$.
However, we conjecture that this is always possible, at least in three dimensions:

\begin{conjecture}
Every full-dimensional lattice-convex set $X \subseteq \Z^3$ admits a finite set $Y \subseteq \Z^3 \setminus X$ such that $\rc(X) = \rc(X,Y)$.
\end{conjecture}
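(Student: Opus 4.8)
The plan is to prove the equivalent reformulation $\rcl(X) = \rc(X)$. Indeed, by the discussion preceding Theorem~\ref{thm:epsrelaxProperties}, a finite certificate for $\rc(X)$ exists precisely when $\rcl(X) = \rc(X)$, and in that case $Y = \obs(X) \cap B_t$ for sufficiently large $t$ does the job. Writing $k \define \rc(X)$, it therefore suffices to exhibit a finite $Y \subseteq \obs(X)$ that cannot be separated from $X$ by fewer than $k$ valid inequalities. Throughout I would view $\rc(X,Y)$ as a covering number: for a halfspace $h = \{\sprod{a}{x} \leq \beta\}$ with $X \subseteq h$, let $V(h) \define \{z \in \obs(X) : \sprod{a}{z} > \beta\}$ be the observers it cuts off; then $\rc(X)$ is the least number of such sets whose union is $\obs(X)$, and the aim is to show that this covering number is already forced on a finite subfamily.

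The first ingredient is a \emph{domination} principle extending the cone argument in Lemma~\ref{lem:hiding-sets-in-observers}: if $h \in \obs(X)$ and $y \in \conv(\{h\} \cup X) \setminus \conv(X)$, then every valid inequality cutting off $y$ also cuts off $h$. This is immediate by writing $y = \lambda h + (1-\lambda)x_0$ with $x_0 \in \conv(X)$ and $\lambda \in (0,1]$ and combining $\sprod{a}{x_0} \leq \beta$ with $\sprod{a}{y} > \beta$ to get $\sprod{a}{h} > \beta$. Consequently it is enough to force $k$ colours on the set $Y_0$ of observers that are closest to $\conv(X)$ along their direction, since every observer is dominated by a suitable element of the cone it spans with $X$. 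The structural part of the proof then consists in showing that, in dimension three, the covering obstruction carried by $Y_0$ is already visible inside a bounded box $B_t$.

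Second, I would analyse how observers escape to infinity using the $d=3$ structure theory underlying Proposition~\ref{prop:finite-certificate-dim3}: after reducing to the lattice $\Lambda = B\Z^3$ spanned by a volume-maximal simplex and invoking the classification in Lemma~\ref{lem:rc-square-dim-3}, one controls the finitely many two-dimensional ``windows'' of $\bd(\conv(X))$ through which the observer directions accumulate. The goal would be to prove that along each such family the nearest observers are eventually periodic, so that a single bounded box $B_t$ contains, for every escape family, a representative whose separation already enforces the full complexity; this would make $Y = \obs(X) \cap B_t$ the desired certificate.

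The hard part will be the passage from bounded to unbounded, which is exactly the gap between weak and strict separation. Arguing by contradiction from $\rc(X, B_t) \leq k-1$ for all $t$, one can normalise the $k-1$ separating halfspaces and extract, as $t \to \infty$, a limiting family of $k-1$ valid halfspaces; but for a fixed $y \in \Z^3 \setminus X$ the strict inequalities degenerate in the limit from $\sprod{a_i^{(t)}}{y} > \beta_i^{(t)}$ to $\sprod{a_i^\star}{y} \geq \beta_i^\star$, so the limit is only a \emph{weak} relaxation in which integer points may be trapped on the boundary. Upgrading such a weak relaxation with $k-1$ facets to a genuine one --- equivalently, ruling out boundary-trapped observers at infinity --- is the heart of the matter, and it is precisely the phenomenon that fails in higher dimensions (Theorem~\ref{thm:rc-square-simplex}). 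I therefore expect that any successful proof must crucially exploit the dimension-three facts that $\rc(X) = \rc_\Q(X) = \rcu(X)$ together with the finite classification above, rather than a dimension-free compactness argument; making this weak-to-strict upgrade rigorous is where I expect the real work to lie.
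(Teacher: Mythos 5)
There is nothing in the paper to compare your attempt against: the statement you were given is an open \emph{conjecture}. The paper proves only the weaker Proposition~\ref{prop:finite-certificate-dim3}, namely that every full-dimensional lattice-convex $X \subseteq \Z^3$ has a finite certificate for the \emph{lower bound} $\rc(X) \geq 4$, and the authors explicitly remark that this does not yield a finite certificate for the exact value of $\rc(X)$ before stating the conjecture. So the question is whether your proposal closes this open problem, and it does not: it is a research plan, not a proof. You yourself defer the decisive step --- upgrading the limiting family of weakly separating halfspaces (obtained by compactness from $\rc(X,B_t) \leq k-1$ for all $t$) to a strict separation, or equivalently showing the covering obstruction is visible in a bounded box --- and the structural claims that are supposed to make this work in dimension three (eventual periodicity of the nearest observers along each ``escape family'', and the assertion that a representative in $B_t$ ``already enforces the full complexity'') are stated as goals, with no argument. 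Since exactly this bounded-to-unbounded passage is what fails for $\Delta_4$ (Theorem~\ref{thm:rc-square-simplex} gives $\rcl(\Delta_4) \leq 4 < 5 = \rc(\Delta_4)$), the burden of the proof lies entirely in the part you leave open.

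Two further points. First, your reduction to the set $Y_0$ of ``observers closest to $\conv(X)$ along their direction'' is vacuous: the domination principle you prove (correctly) lets an \emph{observer} dominate a \emph{non-observer} lying in $\conv(\{h\} \cup X)$, but no observer dominates another observer --- by definition, $\conv(X \cup \{h\})$ contains no integer points beyond $X \cup \{h\}$ when $h \in \obs(X)$ --- so $Y_0 = \obs(X)$, which can be infinite for $d = 3$ (e.g.\ for $\Delta_3$), and nothing has been gained. Second, your appeal to the classification in Lemma~\ref{lem:rc-square-dim-3} misreads its scope: that lemma classifies only the sets $X$ sandwiched between $\Delta_3$ and $T_3$ for which $\conv(\Delta_3)$ is volume-maximal; for a general $X$ the reduction in Proposition~\ref{prop:finite-certificate-dim3} passes to the sublattice $\Lambda = B\Z^3$ and controls $X \cap \Lambda$ only, which suffices for the uniform lower bound $\rcl(X) \geq 4$ but gives no handle on $\obs(X)$ itself or on where observers of $X$ accumulate. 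Your closing observation --- that any proof must exploit $\rc = \rc_\Q = \rcu$ in dimension three rather than a dimension-free compactness argument --- is a sensible diagnosis of why the problem is hard, but it is a diagnosis, not a proof.
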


A curious phenomenon is that, in contrast to Proposition~\ref{prop:finite-certificate-dim3}, in any higher dimension $d \geq 4$, the possible necessity of $d+1$ facets in a relaxation of $\Delta_d$ cannot be certified by a finite subset $Y \subseteq \Z^d \setminus \Delta_d$.
In particular, there is no finite certificate for the fact that $\rc(\Delta_4)=5$, giving a negative answer to a question of Weltge~\cite[Problem~12]{weltge2015diss} and showing that $\rcl(\Delta_4) < \rc(\Delta_4)$ (cf.~Theorem~\ref{thm:epsrelaxProperties}).

\begin{theorem}
  \label{thm:rc-square-simplex}
For every $d \in \Z_{>0}$ holds $\rcl(\Delta_d) \le \ceiling{\frac{d}{2}}+2$.
\end{theorem}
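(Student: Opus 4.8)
The plan is to use that, by definition, $\rcl(\Delta_d)=\max_{t\in\Z_{>0}}\rc(\Delta_d,B_t)$, and hence to construct, for each fixed $t$, a relaxation of $\Delta_d$ with respect to the box $B_t$ that uses at most $\ceiling{d}{2}+2$ inequalities; that is, a polyhedron $P_t$ with $\conv(\Delta_d)\subseteq P_t$, at most $\ceiling{d}{2}+2$ facets, and $P_t\cap B_t\cap\Z^d=\Delta_d$. The first point I would emphasise is that for $d\ge 4$ such a $P_t$ is necessarily \emph{unbounded}, since a bounded full-dimensional polyhedron in $\R^d$ has at least $d+1$ facets. Consequently no single inequality system can work for all $t$ at once, as this would certify $\rc(\Delta_d)\le\ceiling{d}{2}+2$ and contradict $\rc(\Delta_d)=d+1$; the coefficients of the facets of $P_t$ are therefore forced to grow with $t$. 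This is precisely the mechanism behind $\rcl(\Delta_d)<\rc(\Delta_d)$, so exploiting the boundedness of $B_t$ through large, $t$-dependent coefficients is unavoidable and should be built into the construction from the start.

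I would organise the construction inductively, lowering the dimension in steps of two and spending only one additional facet per step, i.e.\ proving the sharper recursive estimate $\rcl(\Delta_d)\le\rcl(\Delta_{d-2})+1$. The base cases $d\in\{1,2\}$ are immediate, since the interval $[0,1]$ and the triangle $\conv(\Delta_2)$ already are bounded relaxations with $2$ and $3$ facets; unwinding the recursion from these base cases then yields $\rcl(\Delta_d)\le\ceiling{d}{2}+2$ for all $d$. For the inductive step I would start from an inequality system separating $\Delta_{d-2}$ from the bad box points in $\R^{d-2}$, lift each of its inequalities to $\R^d$ by adding suitably large coefficients in the two new coordinates $x_{d-1},x_d$, and then adjoin a single new inequality tailored to the new pair. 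The purpose of the large coefficients is to let the lifted inequalities also control the nonnegativity of $x_{d-1}$ and $x_d$ on the part of $B_t$ where the old structure already decides the point, while the one new facet is reserved for the genuinely new configurations created by the vertices $e_{d-1}$ and $e_d$.

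The verification is where the work lies, and I would carry it out as a case analysis over the sign pattern of a bad integer point $y\in B_t\setminus\Delta_d$, using $\sum_i y_i\ge 2$ to dispose of the points beyond the top facet $\sum_i x_i\le 1$ and the large coefficients to dispose of points with a negative coordinate. The central difficulty is that certain pairs of bad points cannot be cut off by one and the same facet: whenever the midpoint of two bad points lies in $\conv(\Delta_d)$ — as for $e_i-e_j$ and $e_j-e_i$, whose midpoint is $\zero$, or for $e_i+e_j-e_k$ and $e_k+e_j-e_i$, whose midpoint is $e_j$ — any valid inequality keeps at least one of the two, so they must be assigned to different facets. The construction must therefore distribute all such conflicting pairs consistently across its $\ceiling{d}{2}+2$ facets, and the main step is to show that the lifted inequalities together with the single new one violate some inequality at every $y\notin\Delta_d$ inside $B_t$.

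The hardest part, I expect, is exactly this bookkeeping of conflict pairs in the inductive step: verifying that \emph{one} extra facet genuinely suffices to absorb all new conflicts introduced by the two added coordinates (a naive lifting that merely re-imposes $x_{d-1},x_d\ge 0$ costs two facets and fails to meet the bound), and that the coefficient growth can be chosen uniformly enough that the lifted old facets remain valid on all of $B_t$ without accidentally cutting off a vertex of $\Delta_d$ or leaving a near-origin bad point uncovered. Making the single new inequality interact correctly with the lifted ones — so that the two newly created conflicting points are split while everything else stays covered — is the crux on which the whole argument turns.
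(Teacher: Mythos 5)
Your framework is sound as far as it goes: reducing to a per-$t$ construction of a relaxation of $\Delta_d$ within $B_t$, observing that such a relaxation must be unbounded (a bounded full-dimensional polyhedron needs at least $d+1$ facets) with coefficients that grow with $t$, pairing up coordinates so as to spend one facet per pair, and identifying the conflict pairs (points whose midpoint lies in $\conv(\Delta_d)$) as the obstruction any construction must respect. This matches the skeleton of the paper's argument, which likewise reduces odd $d$ to even $d+1$ and treats even $d$ by a coordinate-pairing construction. But there is a genuine gap: the proposal contains no construction. The lifted inequalities, their ``suitably large coefficients'', and the ``single new inequality tailored to the new pair'' are never written down, and the verification is explicitly deferred (``the verification is where the work lies'', ``the crux on which the whole argument turns''). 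Your central claim, the recursion $\rcl(\Delta_d)\le\rcl(\Delta_{d-2})+1$, is precisely what remains to be proven, and it is doubtful it can be proven in the form you set it up: the induction hypothesis only yields the \emph{existence} of some system of $\ceiling{\frac{d-2}{2}}+2$ inequalities separating $\Delta_{d-2}$ from $B_t\setminus\Delta_{d-2}$, with no structural control over it, so there is no handle for choosing the two new coefficient columns; an arbitrary such system cannot be assumed to absorb the nonnegativity of $x_{d-1}$ and $x_d$ after lifting. Any workable version must carry an explicit, structured family of systems through the induction, at which point the induction is cosmetic and one may as well give a direct global construction.

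That is what the paper does: for even $d$, with $k=\frac{d}{2}+1$ and $N$ the box parameter, it takes the explicit simplex $Q=\conv\bigl\{\tfrac{1}{1-\varepsilon}(e_1+e_k),\tfrac{1}{1-\varepsilon}e_k,e_2,\dots,e_{k-1},\zero\bigr\}\subseteq\R^k$ with $k+1$ facets, where $\varepsilon=\frac{1}{3N}$, and a linear map $\pi\colon\R^d\to\R^k$ that pairs coordinate $i$ with coordinate $k+i-1$ via the small coefficient $\varepsilon$ and uses irrationals $\delta_1,\dots,\delta_{k-2}$ linearly independent over $\Q$ in the first component; then $P=\pi^{-1}(Q)$ is an unbounded polyhedron with $\frac{d}{2}+2$ facets, and the verification is a case analysis on the integer value $\pi(z)_k\in\{0,1\}$, where the scaling $\varepsilon N=\frac13$ and the rational independence do exactly the work your ``bookkeeping of conflict pairs'' gestures at. These two devices (box-adapted scaling and rationally independent coefficients) are the missing ideas in your plan. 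A further inaccuracy: you justify the $t$-dependence of coefficients by appeal to $\rc(\Delta_d)=d+1$, but that equality is only conjectured (known for $d\le 4$); the statement actually available is the weaker unboundedness of $P_t$. This is only a motivational remark in your write-up, but as stated it treats an open conjecture as fact.
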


\begin{proof}
  The assertion is true for $d \le 3$, because $\rcl(\Delta_d)
  \leq \rc(\Delta_d) \leq d+1$ and $d+1 \leq \ceiling{\frac{d}{2}}+2$
  in these cases.
  Thus, we may assume that $d \ge 4$.
  Moreover, if the result holds for even dimensions, we can easily derive it for odd~$d$, because
  $\rcl(\Delta_d) \le \rcl(\Delta_{d+1})$, since $\Delta_d =
  \setcond{(x_1,\ldots,x_{d+1}) \in \Delta_{d+1}}{x_{d+1} = 0}$ and
  $\rcl(\Delta_{d+1}) \leq \frac{d+1}{2} + 2 = \lceil\frac{d}{2}\rceil + 2$
  for even~$d+1$.

  For this reason, suppose~$d$ is even.
  Let $k = \frac{d}{2} +1$. Let $N \in \Z_{>0}$ be arbitrary and let $\varepsilon
  = \frac{1}{3 N}$.
  Moreover, let~$\delta_1, \dots, \delta_{k-2} \in [0,1] \setminus \Q$ be
  such that $1,\delta_1, \dots, \delta_{k-2}$ are linearly independent over~$\Q$ and let $\mathcal{Z} \define \Z^k \times \{-N,\ldots,N\}^{k-2}
  \subseteq \Z^d$.
  Consider the full-dimensional simplex
  \begin{equation}
    \label{eq:rcboxsimplexA}
    Q
    =
    \conv \Big\{
    \tfrac{1}{1 - \varepsilon} (e_1 + e_k), \tfrac{1}{1 - \varepsilon}e_k,
    e_2, \dots, e_{k-1}, \zero
    \Big\} \subseteq \R^k
  \end{equation}
  and the linear map~$\pi\colon \R^d \to \R^k$, $z \mapsto y$, with
  \begin{subequations}
    \label{eq:rcboxsimplexPartB}
    \begin{numcases}{y_i =}
      z_1 + \sum\nolimits_{j = 1}^{k-2} \delta_j z_{k+j}, & if $i = 1$,\label{eq:rcboxsimplexB}\\
      z_i + \varepsilon z_{k+i-1}, & if $i \in \{2,\dots,k-1\}$,\label{eq:rcboxsimplexC}\\
      z_1 + \sum\nolimits_{j = k}^{d} z_j, & if $i = k$.\label{eq:rcboxsimplexD}
    \end{numcases}
  \end{subequations}
  In the following, we show that~$P = \{ x \in \R^d : \pi(x) \in Q \}$ is
  a relaxation of~$\Delta_d$ within $\mathcal{Z}$, that is, $P \cap
  \mathcal{Z} = \Delta_d$.
  Since~$Q$ has~$k+1$ facets, this shows the relation $\rcl(\Delta_d) \leq \rc(\Delta_d, \mathcal{Z}) \leq k+1 = \frac{d}{2}
  + 2$ as desired.

  First, we show~$\pi(\Delta_d) \subseteq Q$.
  To this end, note that~$\pi(\zero) = \zero$ and~$\pi(e_i) = e_i$
  for~$i \in \{2,\dots,k-1\}$.
  Moreover, $\pi(e_1) = e_1 + e_k$ and~$\pi(e_k) = e_k$, which are
  clearly contained in~$Q$ since~$1 < \frac{1}{1 - \varepsilon}$.
  For $i \in \{k+1,\dots,d\}$, $\pi(e_i) = \delta_{i - k} e_{1} +
  \varepsilon e_{i - k + 1} + e_k$.
  Since~$\pi(e_i)$ is the convex combination
  of~$\frac{1}{1-\varepsilon}(e_1 + e_k)$, $e_{i-k+1}$,
  and~$\frac{1}{1-\varepsilon}e_k$ with coefficients
  $(1-\varepsilon)\delta_{i-k}$,
  $\varepsilon$, and
  $(1 - \varepsilon)(1 - \delta_{i-k})$, respectively,
  we conclude the first part of the proof because~$\pi(\Delta_d) \subseteq Q$.

  In the second part, we show that any~$z \in \mathcal{Z}$ with~$\pi(z) \in
  Q$ is necessarily contained in~$\Delta_d$.
  Then, the theorem's statement follows, because~$N$ can be selected arbitrarily.
  To show that~\eqref{eq:rcboxsimplexA}, \eqref{eq:rcboxsimplexPartB},
  and~$z \in \mathcal{Z}$ imply~$z \in \Delta_d$, we first of all note
  that~\eqref{eq:rcboxsimplexD} and~$z \in \mathcal{Z} \subseteq \Z^d$ yield~$y_k \in
  \Z$.
  For this reason, we have $0 \leq y_k \leq \frac{1}{1 - \varepsilon} = \frac{3N}{3N-1} < 2$
  by~\eqref{eq:rcboxsimplexA}.
  That is, $y_k \in \{0,1\}$.
  We proceed with a case distinction on the binary value of~$y_k$.

  On the one hand, if~$y_k = 0$, then~\eqref{eq:rcboxsimplexA} shows~$y \in
  \conv\{e_2, \dots, e_{k-1}, \zero\}$, i.e., $y_1 = 0$.
  As~$1,\delta_1, \dots, \delta_{k-2}$ are linearly independent over~$\Q$,
  we derive from~\eqref{eq:rcboxsimplexB} that~$z_1 = z_{k+1} = \dots = z_d
  = 0$.
  Then, we conclude
  $
  0
  =
  y_k
  =
  z_1 + \sum\nolimits_{j = k}^d z_j
  =
  z_k
  $
  by~\eqref{eq:rcboxsimplexD},
  and~$y_i = z_i$ for all~$i \in \{2,\dots,k-1\}$ by~\eqref{eq:rcboxsimplexC}.
  Consequently, the latter shows $(z_2,\dots,z_{k-1}) \in \Delta_{k-2}$
  as~$y \in \conv\{e_2, \dots, e_{k-1}, \zero\}$.
  Since the remaining entries of~$z$ equal~$0$, we get $z \in \Delta_{d}$.

  On the other hand, suppose~$y_k = 1$.
  Since~$Q$ is the convex hull of~$Q_1 = \frac{1}{1-\varepsilon} \conv
  \{e_1 + e_k, e_k\}$ and $Q_2 =  \conv \{e_2, \dots, e_{k-1}, \zero\}$, the
  cross-section of~$Q$ at height~$y_k = 1$ is~$(1 - \varepsilon)Q_1 +
  \varepsilon Q_2$.
  That is, any~$y$ satisfying~\eqref{eq:rcboxsimplexA},
  \eqref{eq:rcboxsimplexPartB}, $z \in \mathcal{Z}$, and~$y_k = 1$ also
  satisfies
  \begin{align}
    \label{eq:conclusionBoxPartB}
    0 \leq y_1 \leq 1,
    &&
       y_2, \dots, y_{k-1} \geq 0,
    &&
       y_2 +  \dots + y_{k-1} \leq \varepsilon,
    &&
       y_k = 1.
  \end{align}
  From~$z \in \mathcal{Z}$, we get, for all~$i \in \{2, \dots, k-1\}$,
  \mbox{$-\frac{1}{3} = -\varepsilon N \leq \varepsilon z_{k+i-1} \leq
  \varepsilon N = \frac{1}{3}$.}
  Thus, as~$z_i \in \Z$, we conclude from~\eqref{eq:rcboxsimplexC} that
  $y_i < 0$ or~$y_i > \varepsilon$ if~$z_i \neq 0$.
  Consequently, for every~$i \in \{2, \dots, k-1\}$,
  Properties~\eqref{eq:rcboxsimplexC} and \eqref{eq:conclusionBoxPartB} imply $z_i =
  0$ and $\tilde{z} = (z_{k+1}, \dots, z_{d}) \in \Delta_{k-2}$.
  The only possible non-zero entries of~$z$ are thus~$z_1$, $z_k$, and
  possibly one further entry~$z_j$, for some~$j \in \{k+1,\dots,d\}$.
  To conclude the proof, we distinguish the different cases
  for~$\tilde{z}$.

  As~$\tilde{z} \in \Delta_{k-2}$, either~$\tilde{z} = \zero$ or there exists
  exactly one~$j \in \{2,\dots, k-1\}$ with~$\tilde{z}_{j-1} = z_{k+j-1} = 1$
  and the remaining entries of~$\tilde{z}$ are~0.
  In the first case, \eqref{eq:rcboxsimplexB} shows~$y_1 = z_1$.
  From~$z \in \mathcal{Z}$ and~$0 \leq y_1 \leq 1$, we get~$y_1 \in
  \{0,1\}$.
  If~$z_1 = 0$, then~\eqref{eq:rcboxsimplexD} and~$\tilde{z} = \zero$
  imply~$y_k = z_k$.
  That is, $z \in \{\zero, e_k\} \subseteq \Delta_d$.
  If~$z_1 = 1$, then $1 = y_k
  \overset{\eqref{eq:rcboxsimplexD}}{=} z_1 + z_k = 1 + z_k$ shows~$z_k =
  0$, i.e., $z = e_1 \in \Delta_d$.
  In the second case, let~$j$ be the unique index in $\{2,\dots, k-1\}$
  with~$z_{k+j-1} = 1$.
  Then, we can derive from~\eqref{eq:rcboxsimplexB} that
  $[0,1] \ni y_1 = z_1 + \delta_{j-1} z_{k+j-1} = z_1 + \delta_{j-1}$.
  Since~$z_1 \in \Z$ and~$\delta_{j-1} \in (0,1)$, we conclude~$z_1 = 0$.
  From~\eqref{eq:rcboxsimplexD}, we thus get
  \[
    1
    =
    y_k
    =
    z_1 + \sum_{i = k}^d z_i
    =
    z_k + z_{k+j-1}
    =
    z_k + 1.
  \]
  Consequently, $z_k = 0$ and hence~$z = e_{k+j-1} \in \Delta_d$, finishing the
  proof.
\end{proof} 

Now, we aim to complement this upper bound on $\rcl(\Delta_d)$ by providing a lower bound that grows with the dimension~$d$.
This is in analogy to the lower bound $\rc(X) > \log_2(\dim(X)) - \log_2 \log_2(\dim(X))$, for any $X \subseteq \Z^d$ with $\dim(X) \geq 4$ proven in~\cite[Thm.~1.2]{averkovschymura2020complexity}.
To this end, we need a preparatory combinatorial lemma.

We call a tuple $(\alpha_1,\ldots,\alpha_n) \in \R^n$ \emph{monotonic} if $\alpha_1 \ge \cdots \ge \alpha_n$ or $\alpha_1 \le \cdots \le \alpha_n$ holds. The following is a special case of  De Bruijn's theorem from combinatorics. Since the result might not be well-known in integer optimization, we reproduce its short proof. 

\begin{lemma} \label{lem:debruijn}
	Let $A \in \R^{m \times n}$ be a matrix with $n = 2^{2^m} +1$ columns.
	Then,~$A$ has an $(m \times 3)$-submatrix whose every row is monotonic. 
\end{lemma}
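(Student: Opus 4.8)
The plan is to prove this as a double-counting/pigeonhole argument over the "monotonicity pattern" of each row, treating the columns as indices $1,\ldots,n$ that we read left to right.

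Here is the combinatorial structure I would set up. For any single row $A_{i,\bullet}$ and any triple of column indices $c_1 < c_2 < c_3$, the three entries $A_{i,c_1}, A_{i,c_2}, A_{i,c_3}$ are monotonic precisely when the middle value is not a strict interior extremum, i.e.\ when the pair of comparisons "$A_{i,c_1}$ vs.\ $A_{i,c_2}$" and "$A_{i,c_2}$ vs.\ $A_{i,c_3}$" agree in direction. The cleanest way to encode this is to look at consecutive comparisons: between columns $j$ and $j+1$, assign to row $i$ a \emph{sign} $s_i(j) \in \{+,-,0\}$ according to whether $A_{i,j} < A_{i,j+1}$, $A_{i,j} > A_{i,j+1}$, or $A_{i,j} = A_{i,j+1}$. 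A row is monotonic on an interval of columns iff its sign sequence on that interval is constant (up to the harmless $0$'s). So the goal reduces to finding three columns $c_1<c_2<c_3$ such that \emph{every} row's behaviour across $\{c_1,c_2,c_3\}$ is monotonic.

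\textbf{Key steps.} First, I reduce to the case of strict inequalities by perturbing: since we only care about monotonicity, I may assume all entries in each row are distinct (ties can be broken consistently, e.g.\ lexicographically, without destroying any monotonic triple), so each sign is in $\{+,-\}$. Next, the central idea is to \emph{color the adjacent pairs of columns}. There are $n-1 = 2^{2^m}$ consecutive pairs $(j,j+1)$, and to each I associate the vector of signs $\big(s_1(j),\ldots,s_m(j)\big) \in \{+,-\}^m$, a color from a palette of size $2^m$. Now I build a De Bruijn--style nested coloring: I recursively show that among $2^{2^m}+1$ columns one can extract a subsequence of $3$ columns on which all $m$ rows are simultaneously monotonic, by applying the two-color (Erd\H{o}s--Szekeres / monotone-subsequence) pigeonhole to one row at a time and halving the exponent. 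Concretely, a single row of length $2^{2^{m}}+1$ forces a monotone run, and iterating the "a sequence of length $N^2+1$ over two symbols contains a constant block structure" bound across the $m$ rows yields the tower $2^{2^m}$; with $n=2^{2^m}+1$ columns there are enough consecutive-pair colors to guarantee, after the recursion, three columns whose induced comparison signs are constant in every row.

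\textbf{The main obstacle} will be getting the bookkeeping of the recursion to land \emph{exactly} on the bound $n = 2^{2^m}+1$ rather than some weaker tower. The natural inductive step says: to control $m$ rows I need roughly the square of the number of columns needed for $m-1$ rows (one squaring per added row to run the monotone-subsequence pigeonhole), and the base case (one row, one pair of distinct-valued entries needs a run of length $3$, so $5=2^{2^1}+1$ columns) must be matched carefully so that the doubling of the \emph{exponent} (not the base) is tight. I would therefore phrase the induction on $m$ with the precise statement "any $m\times(2^{2^m}+1)$ matrix has the desired $m\times 3$ submatrix," peel off one row, partition the $2^{2^m}$ adjacent-pair positions by that row's sign into a $+$-block and a $-$-block (one of size at least $2^{2^m}/2 = 2^{2^m-1}$), and argue that passing to the larger block still leaves enough columns, namely $2^{2^{m-1}}+1$, to invoke the inductive hypothesis on the remaining $m-1$ rows — with the squaring $2^{2^{m-1}}\cdot 2 \le 2^{2^m}$ being exactly what makes the tower close up.
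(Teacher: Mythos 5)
Your overall strategy---induction on $m$, handling one row at a time with a monotone-subsequence argument---is the same route the paper takes, but the concrete inductive step you commit to in your final paragraph has a genuine gap. Partitioning the $n-1$ adjacent pairs $(j,j+1)$ by the first row's sign and passing to the majority class does \emph{not} produce columns on which the first row is monotonic: adjacent-pair signs say nothing about comparisons between non-adjacent columns. Concretely, if the first row is $-1,\ 0.5,\ -2,\ -0.5$, then both adjacent pairs $(1,2)$ and $(3,4)$ are increasing, yet \emph{no} three of these four columns form a monotone triple for that row; so even after your recursion terminates with a family of pairs having identical sign vectors in every row, there is no way to extract the desired three columns. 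What extracting a monotone subsequence from one row actually costs is a square root, not a factor of two: Erd\H{o}s--Szekeres needs $N^2+1$ terms to guarantee $N+1$ monotone ones. This is exactly where your bookkeeping slips: half of $2^{2^m}$ is $2^{2^m-1}$, which you silently identify with $2^{2^{m-1}}$; the former decreases the exponent by $1$, the latter halves it, and they are enormously different quantities. The inequality $2\cdot 2^{2^{m-1}} \le 2^{2^m}$ that you invoke is true but irrelevant---the tower closes because of the squaring identity $\bigl(2^{2^{m-1}}\bigr)^2 = 2^{2^m}$, which is precisely the Erd\H{o}s--Szekeres cost per row.

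The repair is the paper's proof: apply Erd\H{o}s--Szekeres with $N = 2^{2^{m-1}}$ to the \emph{values} of the first row (not to its adjacent-pair signs), obtaining an $m\times(2^{2^{m-1}}+1)$ submatrix $B$ whose first row is monotonic as a sequence, hence monotonic on every triple of its columns; then apply the inductive hypothesis to the $(m-1)$-row matrix obtained from $B$ by deleting its first row, and amend the resulting $(m-1)\times 3$ submatrix with the three corresponding entries of the first row of $B$. Your preliminary tie-breaking perturbation is valid (a monotone triple of the perturbed matrix is monotone in the original) but unnecessary, since Erd\H{o}s--Szekeres handles ties directly.
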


\begin{proof} 
		We reproduce the argument given in \cite[Sect.~5]{kruskal1953monotonic} and \cite[Thm.~2]{spencer1971minimal}. 
		We use the celebrated Erd\H{o}s-Szekeres theorem (see~\cite{erdosszekeres1935acombinatorial}) which asserts that each $(N^2+1)$-term  sequence of real numbers has an $(N+1)$-term monotonic subsequence. 
	
		We argue by induction on $m$. For $m=1$, the assertion follows from the Erd\H{o}s-Szekeres theorem with $N=2$. Assume $m \ge 2$ and that the assertion has been verified for matrices with $m-1$ rows. 
		Applying the Erd\H{o}s-Szekeres theorem  with $N=2^{2^{m-1}}$, we see 
			that the matrix $A \in \R^{m \times n}$ has a submatrix $B$ of dimension $m \times (2^{2^{m-1}} + 1)$, whose first row is monotonic. Applying the induction assumption to the matrix obtained from $B$ by removing the first row, we obtain that $B$ has an \mbox{$(m-1) \times 3$}-submatrix~$B'$ whose every row is monotonic.
Amending this submatrix with the three elements of the first row of~$B$ that belong to the three columns of~$B'$, we get the desired monotonic $(m \times 3)$-submatrix of~$A$.
\end{proof} 

Complementing Theorem~\ref{thm:rc-square-simplex}, we can now prove that $\rcl(\Delta_d) \to \infty$ as $d \to \infty$:

\begin{theorem}
	\label{thm:rc:squar:delta:d}
	For each $d \ge 4$, one has $\rcl(\Delta_d) \ge \log _2\log_2 d$. 
\end{theorem}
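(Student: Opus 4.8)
The plan is to bound $\rcl(\Delta_d)$ from below through the single parameter $\rc(\Delta_d, B_1)$, exploiting the definition $\rcl(\Delta_d) = \max_{t} \rc(\Delta_d, B_t) \ge \rc(\Delta_d, B_1)$, and then to feed the coefficient matrix of a minimal separating system into Lemma~\ref{lem:debruijn}. Concretely, I set $r \define \rc(\Delta_d, B_1)$ and fix a system $\sprod{a_1}{x} \le b_1, \dots, \sprod{a_r}{x} \le b_r$ of $r$ inequalities separating $\Delta_d = \{\zero, e_1, \dots, e_d\}$ from $B_1 \setminus \Delta_d$. Since the vertices $e_i$ lie in $\Delta_d$, every inequality satisfies $(a_k)_i \define \sprod{a_k}{e_i} \le b_k$ for all $i \in [d]$ and $k \in [r]$. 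I would record the coefficients in the matrix $M \in \R^{r \times d}$ with $M_{k,i} = (a_k)_i$, so that the $r$ rows are indexed by inequalities and the $d$ columns by coordinate directions.

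The heart of the argument is to assume, toward a contradiction, that $d \ge 2^{2^r} + 1$ and to apply Lemma~\ref{lem:debruijn} to $M$ (restricting to the first $2^{2^r}+1$ columns if $d$ exceeds the threshold, which is harmless since monotonic submatrices only become easier to find with more columns). This yields three coordinate indices $c_1 < c_2 < c_3$ such that for every inequality $k$ the triple $\big((a_k)_{c_1}, (a_k)_{c_2}, (a_k)_{c_3}\big)$ is monotonic. I would then test the single alternating point
\[
z \define e_{c_1} - e_{c_2} + e_{c_3} \in B_1 \setminus \Delta_d,
\]
which lies outside $\Delta_d$ precisely because its $c_2$-th coordinate equals $-1$. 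The key computation is that $z$ satisfies \emph{every} inequality: if row $k$ is nondecreasing on $c_1, c_2, c_3$, then $(a_k)_{c_1} - (a_k)_{c_2} \le 0$ gives $\sprod{a_k}{z} \le (a_k)_{c_3} \le b_k$; if it is nonincreasing, then $(a_k)_{c_3} - (a_k)_{c_2} \le 0$ gives $\sprod{a_k}{z} \le (a_k)_{c_1} \le b_k$. Either way $\sprod{a_k}{z} \le b_k$ for all $k$, so $z$ is not separated from $\Delta_d$, contradicting that the system separates $B_1 \setminus \Delta_d$.

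Hence the assumption $d \ge 2^{2^r}+1$ is untenable, so $d \le 2^{2^r}$, which rearranges to $r \ge \log_2 \log_2 d$, and therefore $\rcl(\Delta_d) \ge \rc(\Delta_d, B_1) = r \ge \log_2\log_2 d$. The conceptual step I expect to be the main obstacle is recognizing that separating $\Delta_d$ is governed purely by the monotonicity pattern of the coefficient rows along coordinate directions, and pinpointing the alternating witness $z = e_{c_1} - e_{c_2} + e_{c_3}$, whose value under any monotonic row is dominated by a single vertex $e_{c_1}$ or $e_{c_3}$ of $\Delta_d$. Once this witness is in hand, feeding $M$ into De Bruijn's theorem and unwinding the doubly-exponential threshold are routine.
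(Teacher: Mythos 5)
Your proof is correct and follows essentially the same route as the paper: both arguments lower-bound $\rcl(\Delta_d)$ by the cost of separating $\Delta_d$ from the alternating points $e_j - e_k + e_\ell$ (you reach them through $B_1$, the paper through the set $Y$ of all such points), apply Lemma~\ref{lem:debruijn} to the coefficient matrix to extract three columns on which every row is monotonic, and use the witness $z = e_{c_1}-e_{c_2}+e_{c_3}$ to force the contradiction $d \le 2^{2^r}$. The only cosmetic difference is that the paper first homogenizes the system via $A = U + v\one\T$ so that all entries become nonnegative before running the monotonicity case analysis, whereas you work directly with the inhomogeneous form $\sprod{a_k}{x} \le b_k$ and bound $\sprod{a_k}{z}$ by a vertex value $(a_k)_{c_i} \le b_k$, which is a slightly more direct rendering of the same argument.
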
 

\begin{proof} 
	Let $Y \define \setcond{ e_j - e_k + e_\ell}{ 1 \le j < k < \ell
          \le d}$. Since $\rcl(\Delta_d) \ge \rc(\Delta_d, Y)$, it suffices
        to show $\rc(\Delta_d, Y) \ge \log_2 \log_2 d$.  Consider a system
        of~$m$ linear functionals $U x + v$ such that
        \[
          \setcond{ x \in \Delta_d \cup Y }{U x + v \ge 0} = \Delta_d.
        \]
        Taking into account
        that $\one^\intercal x = 1$, for each $x \in \{e_1,\ldots,e_d\}
        \cup Y$, we can see that the $(m \times d)$-matrix 
	\(
		A \define (a_{i,j}) \define U + v \one^\intercal
	\) satisfies 
	\begin{equation} \label{std:vectors:and:Y}
			\setcond{x \in \{e_1,\ldots,e_d\} \cup Y }{A x \ge 0} = \{e_1,\ldots,e_d\}.
	\end{equation}
 We claim that $m \ge \log_2 \log_2 d$. If this were not the case, then we had $d \ge 2^{2^m} + 1$. Hence, Lemma~\ref{lem:debruijn} would imply that $A$ has an $(m \times 3)$-submatrix whose every row is monotonic. Let $1 \le j < k < \ell \le d$ be the indices of the columns of  $A$ that induce such a submatrix. By construction, $A e_1 \ge 0,\ldots, A e_d \ge 0$, which means that all entries  of $A$ are non-negative. For $j,k,\ell$ as chosen above, $A (e_j  - e_k + e_\ell ) \ge 0$ holds. Indeed, for each $i \in [m]$, one has $a_{i,j} \ge a_{i,k} \ge a_{i,\ell} \ge 0$ or $0 \le a_{i,j} \le a_{i,k} \le a_{i,\ell}$. Independently of which of the two cases occurs, one always has $a_{i,j} - a_{i,k} + a_{i,\ell} \ge 0$. We have thus found the  point $y \define e_j - e_k + e_\ell \in Y$ that satisfies $A y \ge 0$, which is a contradiction to \eqref{std:vectors:and:Y}. This shows $m \ge \log_2 \log_2 d$ and concludes the proof.
\end{proof} 

\begin{remark}
  \label{rem:correct-order-Y}
  Since the growth of the doubly logarithmic bound in Theorem~\ref{thm:rc:squar:delta:d} is very
  slow, it is natural to wonder if this bound can be improved.
  It turns out that our way of estimating $\rcl(\Delta_d)$ via
  $\rc(\Delta_d, Y)$, with the particular choice of $Y$ fixed in the proof, cannot give any better estimate, since $\rc(\Delta_d, Y)$ is of order $\log
  \log d$ up to an absolute multiplicative constant.
  The upper bound on $\rc(\Delta_d,Y)$ has been clarified in an email
  communication with M.\ Buci\'{c} and B.\ Sudakov.
\end{remark}

There are two natural open problems related to Theorem~\ref{thm:rc:squar:delta:d}. The first one is whether the assertion can be extended to $\rcl(X) \ge \log_2 \log_2 d$ for an arbitrary $d$-dimensional lattice-convex set~$X$. The second is whether the lower bound can be strengthened.

\section{Computational Complexity and Hiding Sets in Dimension 2}
\label{sec:AlgorithmDimension2}

In this section, we confine ourselves with the special case of two-dimensional lattice-convex sets \mbox{$X \subseteq \Z^2$}.
We show that the relaxation complexity of any such~$X$ can be computed in weakly polynomial time and that the size $H(X)$ of a maximal hiding set deviates by at most~$1$ from~$\rc(X)$.

The main property of two-dimensional lattice-convex sets that leads to these results is that they always have only finitely many observers, a fact that Weltge~\cite{weltge2015diss} already realized.
Since the set of observers is a certificate for $\rc(X)$, this means that
the inequalities in~\eqref{eqn:inequalities-bounds} are all in fact
identities for $X \subseteq \Z^2$ (since $\rc(X) = \rc_\Q(X)$ if~$d=2$,
which is a consequence of~\cite[Sec.~7.5]{weltge2015diss}) and that the relaxation complexity is computable.

In order to devise an efficient algorithm to compute $\rc(X)$, we first interpret the relaxation complexity of any lattice-convex set~$X \subseteq \Z^d$, such that~$\obs(X)$ is finite, as the size of a minimal covering of the observers.
More precisely, the separation theorems for convex sets imply that
\[
\mathcal{I}(X) \define \left\{ I \subseteq \obs(X) \st \conv(X) \cap
\conv(I) = \emptyset \right\}
\]
contains all subsets of observers that can be
separated from~$X$ by a single hyperplane.
Let~$\mathcal{I}_{\max}(X)$ be the family of all inclusion-wise
maximal sets in~$\mathcal{I}(X)$.

\begin{observation}
  \label{obs:RCintervalcovering}
  If~$X \subseteq \Z^d$ is such that $\obs(X)$ is finite, then $\rc(X)$ is the smallest number~$k$ of sets~$I_1, \dots,
  I_k \in \mathcal{I}_{\max}(X)$ such that~$\obs(X) = \bigcup_{i = 1}^k I_i$.
\end{observation}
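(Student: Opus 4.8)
The plan is to establish the identity by two opposite inequalities, with the certificate property of observers as the main tool. Recall from \cite[Prop.~4.3]{averkovschymura2020complexity} that $\rc(X) = \rc(X, \obs(X))$ and, more precisely, that a linear system which is valid on $X$ defines a relaxation (its integer points are exactly $X$) as soon as it separates $X$ from every observer. I would use this together with the finiteness of $X$ and $\obs(X)$: the latter makes $\conv(X)$ and every $\conv(I)$ with $I \subseteq \obs(X)$ a polytope, so that membership $I \in \mathcal{I}(X)$ is equivalent---by the separating-hyperplane theorem for disjoint compact convex sets---to the existence of a single inequality valid for $X$ that strictly cuts off all of $I$.

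For the bound $\rc(X) \le k$, where $k$ is the minimum number of covering sets, I would start from a cover $\obs(X) = \bigcup_{i=1}^k I_i$ with $I_i \in \mathcal{I}_{\max}(X)$. For each $i$ the separation theorem supplies an inequality $a_i^\intercal x \le \beta_i$ valid on $\conv(X)$ and violated by every point of $I_i$. Collecting these $k$ inequalities into a polyhedron $P$ gives $X \subseteq P$, while every observer, lying in some $I_i$, is cut off by $P$. The certificate property then forces $P \cap \Z^d = X$, so $P$ is a relaxation of $X$ with at most $k$ facets.

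For the reverse bound, I would take an optimal relaxation $P = \{x \in \R^d : a_i^\intercal x \le \beta_i,\ i \in [\rc(X)]\}$. Each of its inequalities is valid for $X$, hence for $\conv(X)$, so the set of observers it cuts off, $I_i \define \{z \in \obs(X) : a_i^\intercal z > \beta_i\}$, forms a member of $\mathcal{I}(X)$ (its convex hull lies in the open halfspace $\{a_i^\intercal x > \beta_i\}$, which is disjoint from $\conv(X)$). Since no observer lies in $P$, every observer violates at least one inequality, so these sets cover $\obs(X)$; enlarging each of them to an inclusion-wise maximal set of $\mathcal{I}_{\max}(X)$ preserves the cover. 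This exhibits a cover of $\obs(X)$ by $\rc(X)$ sets from $\mathcal{I}_{\max}(X)$.

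Combining the two directions proves the observation. I expect the only delicate point to be the first direction, where passing from the separation of the finitely many observers to the separation of all of $\Z^d \setminus X$ rests entirely on the observer certificate property of \cite[Prop.~4.3]{averkovschymura2020complexity}; everything else reduces to the separating-hyperplane theorem together with the trivial remark that covering sets may always be taken inclusion-wise maximal.
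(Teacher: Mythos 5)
Your proposal is correct and is essentially the paper's own argument: the paper states this result as an observation without a written proof, because the justification is exactly the two ingredients you use, namely the separating-hyperplane theorem (which identifies $\mathcal{I}(X)$ with the subsets of observers cut off by a single valid inequality) and the certificate property $\rc(X) = \rc(X,\obs(X))$ from~\cite[Prop.~4.3]{averkovschymura2020complexity}. Your two-directional write-up, including the remark that covers may be taken from $\mathcal{I}_{\max}(X)$ by enlarging, makes this implicit reasoning explicit with no gaps.
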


In order to computationally make use of this observation, a crucial step is to find the set $\obs(X)$ of observers of any given two-dimensional lattice-convex set~$X \subseteq \Z^2$.
This can be done efficiently because $\obs(X)$ is
the set of integer points in the boundary of enlarging the lattice polygon
$P=\conv(X)$ by lattice-distance one over each of its
edges (see Weltge~\cite[Prop.~7.5.6]{weltge2015diss}).

To make this precise, let $P = \{x \in \R^2 : a_i x_1 + b_i x_2 \leq c_i,
 i \in [k]\}$ be a polyhedron described by~$k$ inequalities with the
assumption that $a_i, b_i, c_i \in \Z$ with $a_i$ and $b_i$ coprime, for all $i \in [k]$. 
Then, following Castryck~\cite{castryck2012movingout}, we write
$
P^{(-1)} \define \left\{ x \in \R^2 : a_i x_1 + b_i x_2 \leq c_i + 1,\; i \in [k] \right\}
$
and say that $P^{(-1)}$ is obtained from~$P$ by \emph{moving out the edges}.
Note that neither does $P^{(-1)}$ need to be a lattice polygon again, nor
may it have as many edges as~$P$.
With this notation the previous discussion can be formulated as
\[
\obs(X) = \bd(P^{(-1)}) \cap \Z^2.
\]
In particular, this means $Y = \obs(X)$ is in convex position, i.e., $Y \subseteq \bd(\conv(Y))$, and that we can efficiently list its elements in counterclockwise order.

\begin{lemma}
\label{lem:determine-observers-plane}
Let~$V \subseteq \Z^2$ be a finite two-dimensional set and let $X = \conv(V) \cap \Z^2$.
There is an algorithm that determines $\obs(X) =
\{y_0,\ldots,y_\ell\}$, with the labeling in counterclockwise order, and which runs in~$\cO(\ell + k \log k + \gamma k)$ time, where $k = \card{V}$ and $\gamma$ is an upper bound on the binary encoding size of any point in~$V$.
\end{lemma}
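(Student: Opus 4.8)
The plan is to compute the set of observers via the characterization $\obs(X) = \bd(P^{(-1)}) \cap \Z^2$ stated just above, where $P = \conv(V)$, and to carefully account for the cost of each stage of this computation. First I would compute the convex hull of $V$ to obtain $P$ together with its facet description $a_i x_1 + b_i x_2 \le c_i$. Sorting the $k = \card{V}$ points and running a standard incremental convex-hull algorithm in the plane (e.g. Graham scan or Andrew's monotone chain) takes $\cO(k \log k)$ comparisons; since each comparison involves numbers of binary encoding size $\cO(\gamma)$, and the edge coefficients $a_i, b_i, c_i$ (and their reduction to coprime $a_i, b_i$ via gcd computations) are obtained by arithmetic on such numbers, the arithmetic overhead contributes the $\gamma k$ term. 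This yields the inequality description of $P$ with integer, edge-primitive coefficients in time $\cO(k \log k + \gamma k)$.

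Next I would form $P^{(-1)}$ by replacing each $c_i$ with $c_i + 1$ and recompute the vertices of the resulting polygon. This again is a planar polytope with at most $k$ facets, and intersecting halfplanes / extracting its vertices in counterclockwise order costs $\cO(k \log k + \gamma k)$ by the same reasoning. The only subtlety here is that $P^{(-1)}$ may have \emph{fewer} edges than $P$ (as the excerpt warns), because moving out edges can cause some facets to become redundant; I would note that this only decreases the facet count and hence does not worsen the asymptotic bound, and that redundant-facet removal in the plane is again an $\cO(k\log k)$ step. The key geometric fact I rely on, quoted from Weltge and Castryck above, is precisely that $\obs(X)$ equals the lattice points on $\bd(P^{(-1)})$, so that no separate correctness argument beyond this citation is needed.

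The final, and I expect most delicate, stage is enumerating the lattice points on $\bd(P^{(-1)})$ in counterclockwise order, which is where the $\cO(\ell)$ term enters. Walking along each edge of $P^{(-1)}$ and listing the integer points on it reduces to a one-dimensional lattice-point enumeration along a segment; the total number of such points across all edges is $\ell + 1 = \card{\obs(X)}$, so traversing them all is $\cO(\ell)$ once the edge data is in hand. I would emphasize that because $\obs(X)$ is in convex position, the natural edge-by-edge traversal already produces the points sorted counterclockwise, so no extra sorting of the output is needed. Summing the three stages gives the claimed $\cO(\ell + k \log k + \gamma k)$ running time.

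The main obstacle I anticipate is the bookkeeping around bit-complexity rather than any conceptual difficulty: one must verify that the intermediate coordinates appearing in $P^{(-1)}$ (its vertices, which need not be integral) have encoding size controlled by $\cO(\gamma)$, so that each arithmetic operation is genuinely $\cO(\gamma)$ and the enumeration of lattice points along an edge can be carried out incrementally without blow-up. This is routine but needs to be stated, since the lemma is about weakly polynomial (bit-complexity) running time and not merely a count of arithmetic operations.
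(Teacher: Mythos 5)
Your proposal is correct and follows essentially the same route as the paper's proof: compute an irredundant primitive-integer facet description of $\conv(V)$ via a planar convex hull algorithm plus gcd reductions, move out the edges and remove redundancies, recover the vertices of $P^{(-1)}$, and enumerate the boundary lattice points edge by edge, with correctness delegated to the cited characterization $\obs(X) = \bd(P^{(-1)}) \cap \Z^2$. The only detail the paper makes explicit that you leave as ``routine bookkeeping'' is how to enumerate lattice points on an edge with non-integral endpoints --- the paper uses a per-edge affine unimodular transformation computed by Euclid's algorithm, whose $\cO(\gamma)$ per-edge cost is absorbed into the $\gamma k$ term, exactly as your accounting anticipates.
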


\begin{proof}
Let $P=\conv(V)$.
The algorithm consists of the following four steps:
\begin{enumerate}
 \item Compute an irredundant inequality description $P = \{x \in \R^2 :
   a_i x_1 + b_i x_2 \leq c_i, i \in [k]_0\}$, where $a_i,b_i,c_i \in \Z$ with $a_i$ and $b_i$ coprime, and the outer normal vectors $(a_i,b_i)^\intercal$ labeled in counterclockwise order.
 \item Move out the edges of $P$ and let $P^{(-1)} = \{x \in \R^2 : a_{i_j}
   x_1 + b_{i_j} x_2 \leq c_{i_j}+1, j \in [m]_0\}$ be such that all redundancies are removed.
 \item Compute the set of vertices $\{w_0,w_1,\ldots,w_m\}$ of $P^{(-1)}$ in counterclockwise order.
 \item For each $i \in [m]_0$, compute the integer points on the segment $[w_i,w_{i+1}]$, with the index taken modulo $m+1$.
\end{enumerate}
Some detailed comments are in order:

In Step~1, we first use a standard convex hull algorithm in the plane, e.g., Graham's scan (cf.~\cite[Ch.~8]{edelsbrunner1987algorithms} for details) to compute the vertices $\{v_0,\ldots,v_r\}$ of~$P$ in counterclockwise order.
Since~$P$ is a lattice polygon, each of its edges, say $[v_j,v_{j+1}]$, corresponds to an integer vector $(\eta_1,\eta_2)^\intercal = v_{j+1}-v_j$.
Because of the counterclockwise ordering, $(\eta_2,-\eta_1)^\intercal$ is an outer normal vector to the edge at hand, and dividing out by the greatest common divisor of $\eta_1$ and $\eta_2$ leads to the desired inequality description.
When we use Euclid's Algorithm in this last step, we obtain a running time of $\cO(k \log k + \gamma r) \subseteq \cO(k \log k + \gamma k)$.

The only thing to do in Step~2, besides increasing all the right hand sides by one unit, is to remove the redundancies.
One way to do this is to use duality between convex hulls and intersections of hyperplanes, and again invoke, e.g., Graham's scan.
This can be done in time $\cO(k \log k)$.

Step~3 just amounts to an iterative computation of the intersection point $w_j$
of the pair of equations $a_{i_j} x_1 + b_{i_j} x_2 = c_{i_j}+1$ and
$a_{i_{j+1}} x_1 + b_{i_{j+1}} x_2 = c_{i_{j+1}}+1$, for $j \in [m]_0$.
In this computation we also record the normal vector $(a_{i_{j+1}},b_{i_{j+1}})^\intercal$ that corresponds to the edge with endpoints~$w_j$ and~$w_{j+1}$.
This needs $\cO(k)$ steps.

For Step~4 we may use Euclid's Algorithm on the defining data of the edge of $P^{(-1)}$ that contains $w_i$ and $w_{i+1}$, and determine an affine unimodular transformation $A_i\colon \R^2 \to \R^2$ such that $A_i [w_i,w_{i+1}] = [\omega_i e_1, \omega_{i+1} e_1]$, for some $\omega_i < \omega_{i+1}$.
The integer points on the latter segment in increasing order of the first coordinate are given by
\[
z_j = \frac{\omega_{i+1}-\lceil \omega_i \rceil - j}{\omega_{i+1} - \omega_i} \omega_i e_1 + \frac{\lceil \omega_i \rceil + j - \omega_i}{\omega_{i+1} - \omega_i} \omega_{i+1} e_1, \text{ for }j\in\{0,\ldots,\lfloor \omega_{i+1} \rfloor - \lceil \omega_i \rceil\}.
\]
Using the inverse transformation~$A_i^{-1}$ then leads to the correctly ordered list of integer points on the segment $[w_i,w_{i+1}]$.
For a given edge of $P^{(-1)}$ these steps can be performed in time proportional to the number of integer points that it contains, leading to a total running time of $\cO(\ell + \gamma k)$ for this step.

Conclusively, we saw that the outlined algorithm terminates with the correctly computed list of observers after $\cO(\ell + k \log k + \gamma k)$ iterations.
\end{proof}

We now put things together and show that~$\rc(X)$ can be computed
efficiently.

\begin{theorem}
\label{thm:rc-plane-alg}
  Let~$V \subseteq \Z^2$ be a finite two-dimensional set, let $X = \conv(V) \cap \Z^2$, and let~$Y = \obs(X)$.
  Then, the relaxation complexity $\rc(X)$ can be computed in
  time $\cO(\card{V} \cdot \log\card{V} + \card{V} \cdot \card{Y} \cdot \log\card{Y} + \gamma \cdot \card{V})$, where $\gamma$ is an upper bound on the binary encoding size of any point in~$V$.
\end{theorem}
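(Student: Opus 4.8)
The plan is to combine the structural results established earlier into an efficient algorithm. The key realization from Observation~\ref{obs:RCintervalcovering} is that, once we know the observers $Y = \obs(X)$ are finite and arranged in convex position, computing $\rc(X)$ reduces to a combinatorial covering problem: find the fewest sets $I_1,\dots,I_k \in \mathcal{I}_{\max}(X)$ whose union is all of $Y$. The crucial geometric fact we exploit is that $Y$ lies in convex position on $\bd(P^{(-1)})$ and can be listed in counterclockwise order (Lemma~\ref{lem:determine-observers-plane}). This cyclic structure means that each set $I \in \mathcal{I}(X)$ — a set of observers separable from $\conv(X)$ by a single hyperplane — corresponds to a \emph{contiguous arc} of the cyclic sequence $y_0,\dots,y_\ell$. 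Thus the covering problem becomes a \emph{circular-arc covering} problem, which admits a fast greedy solution.

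First I would invoke Lemma~\ref{lem:determine-observers-plane} to produce the observers $Y = \{y_0,\dots,y_\ell\}$ in counterclockwise order, costing $\cO(\ell + \card{V}\log\card{V} + \gamma\card{V})$ time. Next I would argue that separability of a subset of observers from $\conv(X)$ by a single inequality is equivalent to that subset forming a contiguous arc in the cyclic order: an inequality $\sprod{a}{x} \le \beta$ valid for $X$ that is violated by some observers cuts off exactly those observers lying on one side, and because $Y$ is in convex position surrounding $\conv(X)$, the violated observers are consecutive along the boundary. For each starting observer $y_i$, I would then compute the maximal arc $I_i \in \mathcal{I}_{\max}(X)$ beginning at $y_i$, i.e.\ the farthest counterclockwise index $j$ such that $\{y_i,\dots,y_j\}$ can still be separated from $\conv(X)$. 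Testing separability of a candidate arc reduces to checking whether $\conv(X) \cap \conv(\{y_i,\dots,y_j\}) = \emptyset$, which is a linear-feasibility question solvable against the $\cO(\card{V})$ facets of $\conv(X)$; accumulating these tests across all arc endpoints yields the $\card{V}\cdot\card{Y}\cdot\log\card{Y}$ term, where the logarithmic factor comes from binary-searching the arc endpoint.

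With the maximal arcs $I_0,\dots,I_\ell$ in hand, the final step is the standard circular covering routine: fix one observer to be covered, try each arc covering it as the first arc, and greedily extend by always choosing the arc reaching farthest around the circle; take the minimum over the $\cO(\card{Y})$ choices of first arc. This greedy circular-cover algorithm runs in $\cO(\card{Y}\log\card{Y})$ time and returns the minimum number of arcs, which by Observation~\ref{obs:RCintervalcovering} equals $\rc(X)$. Summing the three phases gives the claimed bound $\cO(\card{V}\log\card{V} + \card{V}\cdot\card{Y}\cdot\log\card{Y} + \gamma\card{V})$.

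The main obstacle I expect is rigorously establishing the arc-contiguity correspondence — precisely, that the inclusion-wise maximal separable subsets $\mathcal{I}_{\max}(X)$ are \emph{exactly} the maximal contiguous arcs of observers that avoid intersecting $\conv(X)$ in their convex hull. One must verify both directions: that any single separating hyperplane cuts off a contiguous arc (using convex position of $Y$ together with the fact that $\conv(X)$ sits in the interior region bounded by the observers), and conversely that any arc whose convex hull misses $\conv(X)$ is genuinely separable by one inequality (a direct application of the separating hyperplane theorem for the disjoint compact convex sets $\conv(X)$ and $\conv(I)$). Care is also needed in the cost accounting of the separability tests and in handling the wrap-around of the circular order correctly so that the greedy cover is provably optimal.
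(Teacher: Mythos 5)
Your proposal is correct and follows essentially the same route as the paper's proof: compute the observers in counterclockwise order via Lemma~\ref{lem:determine-observers-plane}, observe that the sets in $\mathcal{I}_{\max}(X)$ are contiguous circular arcs, find each maximal arc by binary search with segment-versus-polygon intersection tests against the $\cO(\card{V})$ edges of $\conv(V)$, and finish with a minimum circular-arc covering (the paper cites the $\cO(\ell\log\ell)$ algorithm of Lee \& Lee where you describe the equivalent greedy routine). The cost accounting matches the claimed bound term by term.
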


\begin{proof}
  Assume that the set of observers~$\obs(X)$ is given in
  counterclockwise order~$y_0,\dots,y_\ell$.
  Then, for each~$I \in \mathcal{I}_{\max}(X)$, there exist~$r,s \in [\ell]_0$ such
  that $I = \{y_r,y_{r+1},\dots,y_{r+s}\}$, where indices are modulo~$\ell +
  1$.
  That is, the sets in~$\mathcal{I}_{\max}(X)$ form ``discrete intervals'' of
  observers, see Figure~\ref{fig:intervals}.
  In particular, $\mathcal{I}_{\max}(X)$ contains at most~$\ell + 1$
  intervals.

  Because of Observation~\ref{obs:RCintervalcovering}, we can determine~$\rc(X)$
  by finding the smallest number of intervals
  in~$\mathcal{I}_{\max}(X)$ that is sufficient to cover~$\obs(X)$.
  This problem can be solved in~$\cO(\ell \log \ell)$ time using the minimum
  circle-covering algorithm by Lee \& Lee~\cite{LeeLee1984}.
  Thus, the assertion follows if the observers and the
  intervals in~$\mathcal{I}_{\max}(X)$ can be computed in
  $\cO(\card{V} \cdot \log\card{V} + \card{V} \cdot \card{Y} \cdot
  \log\card{Y} + \gamma \cdot \card{V})$ time.

  By Lemma~\ref{lem:determine-observers-plane}, the list of observers $\obs(X)$ in counterclockwise order can be found
  in $\cO(\card{Y} + \card{V} \cdot \log\card{V} + \gamma \cdot \card{V})$ time.
  To find the sets~$I \in \mathcal{I}_{\max}(X)$, note that we can use binary search on~$y_{r+s}$
  to find a maximum interval~$\{y_r,\dots,y_{r + s}\}$ such
  that~$\conv(\{y_r,y_{r+s}\}) \cap \conv(V) = \emptyset$.
  In each of the~$\cO(\log\card{Y})$ steps of the binary search, we have to check whether the line segment~$\conv(\{y_r, y_{r+s}\})$ intersects one of the~$\cO(\card{V})$ edges of~$\conv(V)$.
  Thus, a single set~$I \in \mathcal{I}_{\max}$ can be computed in~$\cO(\card{V} \cdot
  \log\card{Y})$ time.
  Combining these running times concludes the proof.
\end{proof}

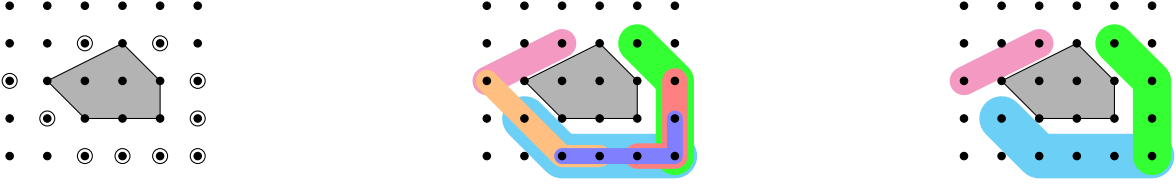
\begin{figure}[t]
  \centering
  \begin{tikzpicture}[scale=0.5]
    \draw[fill=black!30] (2,1) -- (4,1) -- (4,2) -- (3,3) -- (1,2) -- (2,1);
    \draw[line width=3.5ex,draw=white,line cap=round,line join=round] (5,0) -- (2,0) --(1,1);

    \foreach \x in {0,...,5}
    {
      \foreach \y in {0,...,4}
      {
        \node (\x\y) at (\x,\y) [circle,draw=black,fill=black,inner sep=1pt] {};
      }
    }

    \node (a) at (2,3) [circle,draw=black,inner sep=2pt] {};
    \node (b) at (0,2) [circle,draw=black,inner sep=2pt] {};
    \node (c) at (1,1) [circle,draw=black,inner sep=2pt] {};
    \node (d) at (2,0) [circle,draw=black,inner sep=2pt] {};
    \node (e) at (3,0) [circle,draw=black,inner sep=2pt] {};
    \node (f) at (4,0) [circle,draw=black,inner sep=2pt] {};
    \node (g) at (5,0) [circle,draw=black,inner sep=2pt] {};
    \node (h) at (5,1) [circle,draw=black,inner sep=2pt] {};
    \node (i) at (5,2) [circle,draw=black,inner sep=2pt] {};
    \node (j) at (4,3) [circle,draw=black,inner sep=2pt] {};

  \end{tikzpicture}
  \hfill
  \begin{tikzpicture}[scale=0.5]
    \draw[fill=black!30] (2,1) -- (4,1) -- (4,2) -- (3,3) -- (1,2) -- (2,1);

    \draw[line width=3.5ex,draw=cyan!50,line cap=round,line join=round] (5,0) -- (2,0) --(1,1);
    \draw[line width=2.25ex,draw=magenta!50,line cap=round,line join=round] (0,2) -- (2,3);
    \draw[line width=1.75ex,draw=orange!50,line cap=round,line join=round] (3,0) -- (2,0) -- (0,2);
    \draw[line width=3ex,draw=green!80,line cap=round,line join=round] (5,0) -- (5,2) -- (4,3);
    \draw[line width=2ex,draw=red!50,line cap=round,line join=round] (4,0) -- (5,0) -- (5,2);
    \draw[line width=1.25ex,draw=blue!50,line cap=round,line join=round] (2,0) -- (5,0) -- (5,1);

    \foreach \x in {0,...,5}
    {
      \foreach \y in {0,...,4}
      {
        \node (\x\y) at (\x,\y) [circle,draw=black,fill=black,inner sep=1pt] {};
      }
    }

  \end{tikzpicture}
  \hfill
  \begin{tikzpicture}[scale=0.5]
    \draw[fill=black!30] (2,1) -- (4,1) -- (4,2) -- (3,3) -- (1,2) -- (2,1);

    \draw[line width=3.5ex,draw=cyan!50,line cap=round,line join=round] (5,0) -- (2,0) --(1,1);
    \draw[line width=2.25ex,draw=magenta!50,line cap=round,line join=round] (0,2) -- (2,3);
    \draw[line width=3ex,draw=green!80,line cap=round,line join=round] (5,0) -- (5,2) -- (4,3);

    \foreach \x in {0,...,5}
    {
      \foreach \y in {0,...,4}
      {
        \node (\x\y) at (\x,\y) [circle,draw=black,fill=black,inner sep=1pt] {};
      }
    }

  \end{tikzpicture}
  \caption{A lattice polygon~$\conv(X)$ and its observers (left),
    $\mathcal{I}_{\max}(X)$ (center), and a minimum interval covering
    (right).}\label{fig:intervals}
\end{figure}

\begin{remark}
One can show that the number $\card{Y}$ of observers of~$X$ in Theorem~\ref{thm:rc-plane-alg} can be of order $\Theta(2^\gamma)$, which means that the presented algorithm is not polynomial in the input size.
However, if the points in~$V$ are encoded in unary, then the algorithm is indeed polynomial.
\end{remark}

  A question related to computing the relaxation complexity in the plane has been studied by Edelsbrunner \& Preparata~\cite{EdelsbrunnerPreparata1988}:
  Given two finite sets~$X, Y \subseteq \R^2$, they describe an algorithm
  to find a convex polygon~$Q \subseteq \R^2$ with the minimal possible number of edges such that~$X
  \subseteq Q$ and~$\interior{Q} \cap Y = \emptyset$, or to decide that no
  such polygon exists.
  That is, if we apply their algorithm to a lattice-convex set~$X$ and its
  observers~$Y$, we can find a polygon that \emph{weakly} separates~$X$ and~$Y$.
  Their algorithm runs in~$\cO(\card{X \cup Y} \cdot \log\card{X \cup Y})$ time.

The computational problem of \emph{strictly} separating two finite point sets in any dimension by a given number of hyperplanes has been the focus of Megiddo's work~\cite{megiddo1988onthe}.
He introduces the \emph{$k$-separation problem} as the decision problem on whether finite sets $X,Y \subseteq \Z^d$ can be separated by $k$ hyperplanes.
The case $k=1$ reduces to the linear separation problem and can be solved by linear programming in polynomial time.
Megiddo~\cite{megiddo1988onthe} proves the following for $k \geq 2$:
\begin{itemize}
 \item[(a)] If $d$ is arbitrary, but $k$ is fixed, then the $k$-separation problem is NP-complete.
  This even holds for $k=2$.
 \item[(b)] If $d=2$, but $k$ is arbitrary, then the $k$-separation problem is NP-complete.
 \item[(c)] If both $d$ and $k$ are fixed, then the $k$-separation problem is solvable in polynomial time.
\end{itemize}
With regard to computing the relaxation complexity, comparing Theorem~\ref{thm:rc-plane-alg} with Part~(b) shows that (at least in the plane) deciding $\rc(X) \leq k$ is a computationally easier problem than the general $k$-separation problem.
Part~(c) is applicable to computing $\rc(X)$ in polynomial time via a binary search whenever $X \subseteq \Z^d$ has finitely many observers whose cardinality is polynomially bounded as a function of~$\card{X}$.
Relevant families of lattice-convex sets~$X$ with this property have been identified in \cite[Thm.~4.4 \& Thm.~4.5]{averkovschymura2020complexity}:
First, if $X \subseteq \Z^d$ contains a representative of every residue class in $(\Z/2\Z)^d$, then $\obs(X) \subseteq 2X-X$, i.e., $\card{\obs(X)} \in \cO(\card{X}^2)$.
Second, if $\conv(X)$ contains an interior integer point, then $\card{\obs(X)} \leq c_d \cdot \card{X}$, with $c_d$ a constant only depending on the dimension~$d$.

However, since in dimensions $d \geq 3$ not every lattice-convex set has finitely many observers, the following question might have an affirmative answer:
\begin{question}
For $d \geq 3$ fixed, is it NP-hard to compute $\rc_\Q(X)$ ?
\end{question}

The interpretation of $\rc(X)$ given in Observation~\ref{obs:RCintervalcovering} as the minimal size of an interval covering of $\obs(X)$ can be used to show that, in the plane, the hiding set bound deviates from $\rc(X)$ by at most~$1$.

\begin{theorem}
\label{thm:hiding-set-bound-plane}
Let $X \subseteq \Z^2$ be full-dimensional and lattice-convex.
Then,
\[
H(X) \leq \rc(X) \leq H(X) + 1.
\]
\end{theorem}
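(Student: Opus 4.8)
The plan is to exploit the structure of the two-dimensional problem laid out in Observation~\ref{obs:RCintervalcovering}: the observers $\obs(X) = \{y_0,\dots,y_\ell\}$ lie in convex position in counterclockwise order, and $\rc(X)$ equals the minimum number of \emph{circular arcs} (discrete intervals $I \in \mathcal{I}_{\max}(X)$) needed to cover them. The lower bound $H(X) \leq \rc(X)$ is already established in~\eqref{eqn:hiding-set-bound}, so the entire content is the upper bound $\rc(X) \leq H(X) + 1$. First I would reformulate both quantities combinatorially: a hiding set of observers corresponds to a clique in the hiding graph $G(X)$, i.e.\ a set of observers that pairwise \emph{cannot} be co-separated from $\conv(X)$; equivalently, no two of them lie together in any single interval $I \in \mathcal{I}(X)$. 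So $H(X)$ is the maximum number of observers that are pairwise non-co-intervalable, while $\rc(X)$ is the minimum number of intervals covering the circularly ordered observers.

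\textbf{The combinatorial core.}

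The key step is to recognize this as a covering-versus-packing duality on a \emph{circular-arc} structure. I would argue that the interval (arc) covering number exceeds the maximum packing of pairwise-conflicting points by at most one. Concretely, suppose $\rc(X) = k$, realized by a minimum covering with intervals $I_1,\dots,I_k$ arranged around the circle. I would select a system of ``representative'' observers, one associated to each interval boundary, and show that among any covering by $k$ arcs one can extract $k-1$ observers that are pairwise non-co-intervalable — these then form a hiding set, giving $H(X) \geq k - 1 = \rc(X) - 1$. The mechanism is the standard one for circular covers: walk around the circle through the minimal cover, and at each ``transition'' between consecutive arcs pick a witness observer that forces the transition; consecutive witnesses cannot both lie in a common interval (else the cover was not minimal), and with care one shows that \emph{all} selected witnesses are pairwise conflicting except possibly for the wrap-around pair, which accounts for the single lost unit.

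\textbf{The main obstacle and how I would handle it.}

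The hard part is the wrap-around: on a circle (as opposed to a line), the first and last witness of the walk need not be in conflict, which is exactly why the bound is $H(X)+1$ rather than $H(X)$. I would handle this by fixing a starting arc and extracting witnesses from the remaining $k-1$ transitions, showing these $k-1$ points are genuinely pairwise hiding (pairwise segments crossing $\conv(X)$) by using that each pair is ``separated on the circle'' by at least one full uncovered gap relative to any single halfplane. A subtle point to verify is that a witness chosen at a transition really \emph{is} an observer (it is, since $V(X) = \obs(X)$ by construction) and that the pairwise-intersection condition $\conv(\{x,y\}) \cap \conv(X) \neq \emptyset$ holds for non-adjacent-arc witnesses; this follows because if two observers could be separated from $\conv(X)$ by one line, they would lie in a common $I \in \mathcal{I}(X)$, contradicting that the cover needed distinct arcs for them. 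I would close by noting the bound is tight — a triangle-free non-$3$-colorable configuration (as in Figure~\ref{fig:28-certificate-graph}, in spirit) shows the gap of $1$ can genuinely occur in the planar analogue, so no better absolute constant is available.
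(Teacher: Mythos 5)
Your plan has the same skeleton as the paper's proof: apply Observation~\ref{obs:RCintervalcovering} to a minimal covering $I_1,\dots,I_k \in \mathcal{I}_{\max}(X)$ of the circularly ordered observers, pick one witness per transition between consecutive arcs, discard one witness to account for the wrap-around, and argue that the remaining $k-1$ witnesses form a hiding set. The gap is that both places where you invoke minimality of the cover to obtain pairwise conflicts are non sequiturs, and these are precisely the hard steps. For consecutive arcs, your claim that a witness ``forcing the transition'' and the next such witness ``cannot both lie in a common interval (else the cover was not minimal)'' is false for an arbitrary such choice: if $w_i \in I_i\setminus I_{i+1}$ and $w_{i+1}\in I_{i+1}\setminus I_{i+2}$, these two points may well lie in some common $J\in\mathcal{I}(X)$ that is not one of the cover's arcs, and this yields no smaller cover, since replacing $I_i$ and $I_{i+1}$ by $J$ leaves the rest of $I_i\cup I_{i+1}$ uncovered. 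This is exactly why the paper does not take arbitrary witnesses but engineers them iteratively: starting from the last element $y_i$ of $I_i'=I_i\setminus I_{i+1}$, as long as the segment $[y_i,y_{i-1}]$ misses $\conv(X)$ it moves $y_{i-1}$ from the $(i-1)$-st set into the $i$-th set (legitimate because the observers are in convex position, so the enlarged set stays in $\mathcal{I}(X)$) and repeats; the process must terminate, since otherwise the two neighboring sets would merge into a single member of $\mathcal{I}(X)$, and \emph{that} is what contradicts minimality. Your ``with care one shows'' is standing in for exactly this mechanism, which is the core of the proof.

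The same problem occurs for non-adjacent witnesses: ``they would lie in a common $I\in\mathcal{I}(X)$, contradicting that the cover needed distinct arcs for them'' proves nothing, because two points of different arcs of a minimal cover can perfectly well lie in a common separable set without any reduction of the cover being possible. The correct argument is geometric and uses convex position of $\obs(X)$: if $[y_s,y_t]\cap\conv(X)=\emptyset$ with $t>s+1$, then all observers on one of the two circular arcs between $y_s$ and $y_t$ can be separated from $\conv(X)$ jointly with $y_s$ and $y_t$; because the witnesses are the \emph{last} elements of the (adjusted) sets, that arc contains two full consecutive sets of the cover, which could then be merged into a single member of $\mathcal{I}(X)$ --- only at this point does minimality of $k$ give a contradiction. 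Without these two repairs, your selected $k-1$ points need not be pairwise hiding, so the bound $\rc(X)\le H(X)+1$ does not follow from what you wrote.
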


\begin{proof}
The lower bound is the general hiding set bound from Section~\ref{sect:hiding-sets}.
For the upper bound, we construct a suitable hiding set~$H$ with the help of Observation~\ref{obs:RCintervalcovering}.

To this end, let $I_1,\dots,I_k \in \mathcal{I}_{\max}(X)$ be such that~$\obs(X) = \bigcup_{i = 1}^k I_i$ is a minimal covering of the observers of~$X$.
In particular, $\rc(X) = k$.
Moreover, we assume that the sets $I_i$, $i \in [k]$, are labeled in clockwise order.
The following procedure allows us to extract a hiding set~$H$ of size $k-1$ from the sets~$I_i$:

First of all, we define $I_i' \define I_i \setminus I_{i+1}$, for $i \in [k]$,
with the indices always understood modulo~$k$.
This implies the~$I_i'$ to be pairwise distinct.
Note that since~$k$ is minimal, we must have $I_i' \neq \emptyset$, for all $i \in [k]$.
Secondly, we let $y_k$ be the last element (with respect to the clockwise order) in~$I_k'$ and let~$y_{k-1}$ be the last element in~$I_{k-1}'$.
If $[y_k,y_{k-1}] \cap \conv(X) = \emptyset$, then $I_k' \cup \{y_{k-1}\} \in \I(X)$, since the observers of~$X$ are in convex position.
We then check whether the line segment connecting the last elements in~$I_k' \cup \{y_{k-1}\}$ and $I_{k-1}' \setminus \{y_{k-1}\} \in \I(X)$ has a point in common with~$\conv(X)$.
We repeat this procedure until we arrive at sets~$I_k''$ and~$I_{k-1}''$ whose respective last elements are endpoints of a line segment that intersects $\conv(X)$ non-trivially.
This has to happen at some point, as otherwise $I_k' \cup I_{k-1}' \in \I(X)$, yielding a covering of $\obs(X)$ with $k-1$ sets of~$\I(X)$.

Now, we iteratively proceed as above and obtain sets $I_k'',I_{k-1}'',\ldots,I_1'' \in \I(X)$ such that $\obs(X) = \bigcup_{i=1}^k I_i''$ and $[y_i,y_{i-1}] \cap \conv(X) \neq \emptyset$, for $2 \leq i \leq k$, and where $y_i$ is the respective last element in~$I_i''$.
With this notation we claim that $H= \{y_k,y_{k-1},\ldots,y_2\}$ is a hiding set for~$X$.

If $k=3$, then there is nothing to show as by construction $[y_3,y_2] \cap \conv(X) \neq \emptyset$.
So, let $k \geq 4$.
Assume for contradiction that $[y_s,y_t] \cap \conv(X) = \emptyset$, for some $2 \leq s < t \leq k$.
By construction $t > s+1$, and since $\obs(X)$ is in convex position, this means that $I_s'' \cup I_{s+1}'' \in \I(X)$ or $I_s'' \cup I_{s-1}'' \in \I(X)$, yielding a covering of $\obs(X)$ with only $k-1$ sets of~$\I(X)$.
\end{proof}

Already the square $\square_2=\{0,1\}^2$ shows that the upper bound in Theorem~\ref{thm:hiding-set-bound-plane} is best possible.
Indeed, we have $\rc(\square_2) = 3$, and a routine check reveals that every hiding set for $\square_2$ has at most two elements.
We do not know whether a similar result holds in higher dimensions:
The discrete simplex~$\Delta_d$ does not admit hiding sets of size larger than~$3$ (see~\cite[Prop.~8.2.4]{weltge2015diss}), but $\rc(\Delta_d)$ grows at least logarithmically with~$d$ (see~\cite[Prop.~8.1.4]{weltge2015diss} and~\cite[Thm.~1.2]{averkovschymura2020complexity}).
In fact, Weltge conjectures that $\rc(\Delta_d)=d+1$.
However, this example does not exclude an affirmative answer for the following question.

\begin{question}
Is there a constant $h_d$ only depending on the dimension~$d$ such that for every full-dimensional lattice-convex set $X \subseteq \Z^d$ we have
\[
\rc(X) \leq H(X) + h_d \,? 
\]
\end{question}

\section{Relaxation complexity of special families of lattice-convex sets}
\label{sec:box}

The exact value of~$\rc(X)$, or any of its variants, is known only for very few classes of lattice-convex sets~$X \subseteq \Z^d$:
The discrete unit cube always admits a simplex relaxation, that is, \mbox{$\rc(\{0,1\}^d) = d+1$} (see Weltge~\cite[Thm.~8.1.3]{weltge2015diss}).
The conjectured value $\rc(\Delta_d) = d+1$ for the discrete standard simplex $\Delta_d = \{0,e_1,\ldots,e_d\}$ could only be affirmed for dimensions $d\leq 4$ so far (see~\cite[Cor.~3.8]{averkovschymura2020complexity}).
Jeroslow~\cite[Thm.~7]{jeroslow1975ondefining} showed that, for every $1 \leq k \leq 2^{d-1}$ there is a $(2^d-k)$-element subset~$X_k$ of $\{0,1\}^d$ such that $\rc(X_k,\{0,1\}^d) = k$.
Further, for $k=2^{d-1}$ one can choose $X_{2^{d-1}} = X_{\textrm{even}} \define \left\{ x \in \{0,1\}^d : \sum_{i=1}^d x_i\textrm{ is even}\right\}$, however the value $\rc(X_{\textrm{even}})$ is not known.
Besides some very specific examples that were needed to establish computability of $\rc(X)$ for $3$-dimensional lattice-convex sets $X \subseteq \Z^3$ (see~\cite[Sect.~6]{averkovschymura2020complexity}), we are not aware of any further classes for which $\rc(X)$ is known exactly.

In this section, we determine the relaxation complexity of two classes of lattice-convex sets and thus add to the short list of exact results given above.

\subsection{Rectangular boxes}

Weltge's (rational) simplex relaxation of the discrete unit cube $\{0,1\}^d$ can be generalized to the case that one segment of the cube is allowed to have arbitrary length.
To this end, we call a set $\{a,\ldots,b\}$ with $a, b \in \Z$ and $a \le b$ a \emph{discrete segment} of length $b-a$, and we call the Cartesian product of finitely many discrete segments a \emph{discrete rectangular box}.
These examples contain a representative of every residue class in $(\Z/2\Z)^d$, so that the relaxation complexity agrees with the rational relaxation complexity in view of~\cite[Thm.~1.4]{averkovschymura2020complexity}.

\begin{lemma}
\label{lem:rc-box-one-long-edge}
For $b \in \Z_{>0}$, let $X_b \define \{0,1\}^\ell \times \{0,1,\ldots,b\} \subseteq \Z^{\ell+1}$.
Then, the simplex
\begin{align*}
P &\define \left\{ x \in \R^{\ell+1} : x_k \leq 1 + \sum_{i=k+1}^{\ell+1} (b+1)^{-i} x_i, \textrm{ for }k \in [\ell], \right.\\
&\phantom{= \bigg\{ x \in \R^{\ell+1} :\ }\left. x_{\ell+1} \leq b \quad \textrm{and} \quad x_1 + \sum_{i=2}^{\ell+1}(b+1)^{-i} x_i \geq 0 \right\}
\end{align*}
is a relaxation of~$X_b$.
In particular, $\rc(X_b) \leq \ell+2$.
\end{lemma}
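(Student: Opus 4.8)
The plan is to verify directly that the polytope $P$ is a relaxation of $X_b$, i.e., that $P \cap \Z^{\ell+1} = X_b$. The inclusion $X_b \subseteq P$ should be the easy direction: I would simply check that each of the $\ell+2$ inequalities is satisfied by every lattice point $x \in \{0,1\}^\ell \times \{0,\ldots,b\}$. For the constraints $x_k \leq 1 + \sum_{i=k+1}^{\ell+1}(b+1)^{-i}x_i$, since $x_k \leq 1$ and the right-hand side is at least $1$ (the correction terms are nonnegative on $X_b$), these hold. The constraint $x_{\ell+1} \leq b$ is immediate, and $x_1 + \sum_{i=2}^{\ell+1}(b+1)^{-i}x_i \geq 0$ holds because all coordinates are nonnegative on $X_b$.

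The substantial direction is $P \cap \Z^{\ell+1} \subseteq X_b$. Here the idea is that the tiny perturbation coefficients $(b+1)^{-i}$ are chosen so small that they cannot change the \emph{integer} truncation of the leading term, yet they break ties in a way that rules out unwanted lattice points. I would take an integer point $x \in P$ and argue coordinate by coordinate, presumably processing the indices $k = 1, 2, \ldots, \ell$ in order. The key quantitative estimate is that for any integer point in $P$ the coordinates are bounded (the last coordinate satisfies $0 \leq x_{\ell+1} \leq b$, and inductively the remaining $x_k$ are squeezed into $\{0,1\}$), so that the total contribution of all correction terms $\sum_{i} (b+1)^{-i} x_i$ is strictly less than $1$. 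This is exactly where the geometric-series choice of base $b+1$ matters: with each $x_i$ bounded by $b$, one has $\sum_{i \geq 2}(b+1)^{-i} x_i < \sum_{i \geq 2}(b+1)^{-i}\cdot b = b \cdot \frac{(b+1)^{-2}}{1-(b+1)^{-1}} = \frac{b}{(b+1)b} = \frac{1}{b+1} < 1$.

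Concretely, I expect the argument to run as follows. From $x_{\ell+1} \leq b$ and the lower-bound inequality (which forces $x_1 \geq -\sum_{i\geq 2}(b+1)^{-i}x_i > -1$, hence $x_1 \geq 0$ as $x_1 \in \Z$), one establishes the base of an induction. Then for each $k$ from $1$ up to $\ell$, the constraint $x_k \leq 1 + \sum_{i=k+1}^{\ell+1}(b+1)^{-i}x_i$ together with the bound $\sum_{i=k+1}^{\ell+1}(b+1)^{-i}x_i < 1$ gives $x_k < 2$, so $x_k \leq 1$; combined with $x_k \geq 0$ this yields $x_k \in \{0,1\}$. I would need to be careful about the direction of the induction so that when bounding the correction sum at index $k$ I already know the relevant later coordinates are bounded by $b$; since every $x_i$ with $i \geq 2$ is either in $\{0,1\}$ or equals $x_{\ell+1} \in \{0,\ldots,b\}$, the uniform bound $x_i \leq b$ holds throughout and the geometric estimate applies cleanly.

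The main obstacle, and the place requiring genuine care rather than routine checking, is bounding the perturbation sum \emph{uniformly} for integer points of $P$ before one knows that the coordinates are small. One must rule out the a priori possibility that some $x_i$ is large and negative, which could make a correction term large in absolute value and invalidate the ``$< 1$'' estimate. I would handle this by observing that the lower-bound inequality $x_1 + \sum_{i=2}^{\ell+1}(b+1)^{-i}x_i \geq 0$ prevents the coordinates from drifting downward, and by using the chain of inequalities to establish upper bounds top-down; the delicate step is organizing these so that the coordinate bounds and the correction-sum bound bootstrap each other consistently. Once $P \cap \Z^{\ell+1} = X_b$ is established, the bound $\rc(X_b) \leq \ell+2$ follows immediately, since $P$ is defined by exactly $\ell + 2$ facet inequalities.
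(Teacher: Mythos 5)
Your outline handles the easy parts correctly: the inclusion $X_b \subseteq P$ is routine, the upper bounds $x_k \leq 1$ for $k \in [\ell]$ do follow from the geometric-series estimate (processed from the last coordinate downward, or as in the paper by taking the \emph{largest} index $k$ with $x_k > 1$ --- your forward order $k = 1,2,\ldots,\ell$ would not work, since the constraint for $x_k$ involves the later coordinates, but this is easily repaired because negative coordinates only help these estimates), and $x_1 \geq 0$ then follows from the last defining inequality exactly as you say. The genuine gap is that you never prove $x_k \geq 0$ for $k \in \{2,\ldots,\ell+1\}$. You assert it repeatedly (``the last coordinate satisfies $0 \leq x_{\ell+1} \leq b$'', ``every $x_i$ with $i \geq 2$ is either in $\{0,1\}$ or equals $x_{\ell+1} \in \{0,\ldots,b\}$'', ``combined with $x_k \geq 0$''), yet $P$ contains no constraint of the form $x_k \geq 0$ for $k \geq 2$, and your proposed mechanism --- that the single inequality $x_1 + \sum_{i=2}^{\ell+1}(b+1)^{-i}x_i \geq 0$ ``prevents the coordinates from drifting downward'' --- fails as stated. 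For instance, the integer point $x = (1,0,\ldots,0,-1,0,\ldots,0)$, with the entry $-1$ in some position $j \geq 2$, satisfies that inequality, since $1 - (b+1)^{-j} \geq 0$; the lower-bound constraint alone cannot exclude such points, so your bootstrap has nothing to converge to.

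What is missing is the interplay between the upper-bound constraints and the lower-bound constraint, which is the actual crux of the paper's proof. There, one takes $j \geq 2$ \emph{minimal} with $x_j \leq -1$ and first shows that every coordinate $x_i$ with $i < j$ must vanish: if $k < j$ were the largest index with $x_k > 0$ (hence $x_k = 1$ by the already established upper bounds), then in the constraint $x_k \leq 1 + \sum_{i=k+1}^{\ell+1}(b+1)^{-i}x_i$ the intermediate coordinates are zero (by choice of $k$ and minimality of $j$), and the negative term $(b+1)^{-j}x_j \leq -(b+1)^{-j}$ strictly dominates the remaining tail $\sum_{i=j+1}^{\ell}(b+1)^{-i} + b\,(b+1)^{-(\ell+1)} < (b+1)^{-j}$, forcing $x_k < 1$, a contradiction. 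This step is precisely what kills the point $(1,0,\ldots,0,-1,0,\ldots,0)$ above. Only after all coordinates before $j$ are known to be zero does the inequality $x_1 + \sum_{i=2}^{\ell+1}(b+1)^{-i}x_i \geq 0$ give the final contradiction $0 \leq (b+1)^{-j}x_j + \sum_{i=j+1}^{\ell+1}(b+1)^{-i}x_i < 0$. Your sketch contains neither this case distinction nor any substitute for it; note also that the obstacle you flag (large negative coordinates spoiling the ``$<1$'' estimates) is not actually the issue, since negative terms only strengthen those estimates --- the issue is that the negative coordinates themselves must be excluded, and that exclusion is where all the work lies.
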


\begin{proof}
The proof is just an adjustment of Weltge's arguments in~\cite[Lem.~7.2.1]{weltge2015diss}.
We need to show that $X_b = P \cap \Z^{\ell+1}$.
The inclusion $X_b \subseteq P \cap \Z^{\ell+1}$ is quickly checked, so we give the details for the reverse inclusion.

Let $x \in P \cap \Z^{\ell+1}$.
First, we show that $x_i \leq 1$, for all $i \in [\ell]$.
To this end, let $k \in [\ell]$ be the largest index for which $x_k > 1$.
Then, the first defining inequalities of~$P$ together with basic facts about geometric series give
\begin{align*}
1 &< x_k \leq 1 + \sum_{i=k+1}^{\ell+1}(b+1)^{-i} x_i \leq \sum_{i=k+1}^\ell (b+1)^{-i} + \frac{b}{(b+1)^{\ell+1}}\\
&< 1+\frac{1}{b} + \frac{b-1}{(b+1)^{\ell+1}} < 1 + \frac{1}{b} + \frac{b-1}{b} = 2,
\end{align*}
in contradiction to~$x_k \in \Z$.

Second, we show that $x_1$ is non-negative.
Indeed, by the last defining inequality of~$P$, the just established fact
that $x_i \leq 1$, for $i \in [\ell]$, and $x_{\ell+1} \leq b$, we have
\[
x_1 \geq -\sum_{i=2}^{\ell+1}(b+1)^{-i} x_i \geq -\sum_{i=2}^\ell(b+1)^{-i} - \frac{b}{(b+1)^{\ell+1}} > -1.
\]
It remains to show that $x_i \geq 0$, for every $i \in \{2,\dots,\ell+1\}$.
To this end, let $j \in \{2,\ldots,\ell+1\}$ be the smallest index such that $x_j \leq -1$.
We first show that $x_i = 0$, for every $i < j$:
Let $k < j$ be the largest index with $x_k > 0$, which means that $x_k = 1$.
Then, by the first defining inequalities of~$P$, we have
\begin{align*}
1 &= x_k \leq 1 + \sum_{i=k+1}^{\ell+1}(b+1)^{-i} x_i = 1 + (b+1)^{-j} x_j + \sum_{i=j+1}^{\ell+1}(b+1)^{-i} x_i\\
&\leq 1 - (b+1)^{-j} + \sum_{i=j+1}^\ell (b+1)^{-i} + \frac{b}{(b+1)^{\ell+1}} < 1.
\end{align*}
The last inequality follows from the closed form expression for geometric sums and basic algebraic manipulations.
Now, knowing that $x_i = 0$, for every $i < j$, we use the last defining inequality of~$P$ and get
\begin{align*}
0 &\leq x_1 + \sum_{i=2}^{\ell+1}(b+1)^{-i} x_i = (b+1)^{-j} x_j + \sum_{i=j+1}^{\ell+1}(b+1)^{-i} x_i\\
&\leq -(b+1)^{-j} + \sum_{i=j+1}^\ell(b+1)^{-i} + \frac{b}{(b+1)^{\ell+1}} < 0.
\end{align*}
This contradiction finishes the proof.
\end{proof}

This simplicial relaxation of~$X_b$ quickly leads to an upper bound on the relaxation complexity of general discrete rectangular boxes.
To see that these upper bounds are tight, we employ the concept of hiding sets discussed in Section~\ref{sect:hiding-sets}.

\begin{theorem}
\label{thm:box-theorem}
	For integers $k>0$ and $\ell \ge 0$,
let $S_1, \ldots, S_k$ be discrete segments of length at least $2$, and let $T_1,\ldots,T_\ell$ be discrete segments of length $1$. Consider the discrete box
$X = S_1 \times \ldots \times S_k \times T_1 \times \ldots \times T_\ell$ in $\Z^{k + \ell}$.
Then, $\rc(X) = 2 k + \ell$.
\end{theorem}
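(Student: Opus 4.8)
The plan is to prove the two inequalities $\rc(X) \le 2k+\ell$ and $\rc(X) \ge 2k+\ell$ separately. Since $\rc$ is invariant under integer translations, I would first normalize the box so that $S_i = \{0,1,\dots,b_i\}$ with $b_i \ge 2$ and $T_j = \{0,1\}$, writing the long coordinates as $1,\dots,k$ and the short ones as $k+1,\dots,k+\ell$, so that $\conv(X) = \prod_{i=1}^k [0,b_i] \times [0,1]^\ell$ and $\aff(X) = \R^d$ with $d = k+\ell$.

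For the upper bound, the key is a product estimate: if $Z \subseteq \Z^m$ is lattice-convex with a relaxation $Q$ (so $Q \cap \Z^m = Z$), then $Q \times [0,b]$ satisfies $(Q\times[0,b]) \cap \Z^{m+1} = Z \times \{0,\dots,b\}$ and has at most two more facets than $Q$; hence $\rc(Z \times \{0,\dots,b\}) \le \rc(Z)+2$. Starting from Lemma~\ref{lem:rc-box-one-long-edge}, which yields $\rc(S_1 \times T_1 \times \cdots \times T_\ell) \le \ell+2$, I would adjoin the remaining long segments $S_2,\dots,S_k$ one at a time, each costing at most two facets, to obtain $\rc(X) \le (\ell+2) + 2(k-1) = 2k+\ell$.

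For the lower bound I would exhibit an explicit hiding set of size $2k+\ell$ and invoke the hiding set bound~\eqref{eqn:hiding-set-bound}. Writing $g \define \sum_{i=1}^k e_i$, I would take
\[
H \define \{\, g - 2e_i : i \in [k] \,\} \cup \{\, g + b_i e_i : i \in [k]\,\} \cup \{\, g + 2e_{k+j} : j \in [\ell]\,\}.
\]
All $2k+\ell$ points are integral, pairwise distinct, and lie outside $\conv(X)$ (they violate $x_i \ge 0$, $x_i \le b_i$, or $x_{k+j} \le 1$, respectively), so $H \subseteq (\aff(X)\cap\Z^d)\setminus X$. It then remains to check that for every pair of points the connecting segment meets $\conv(X)$. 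The guiding idea is that all these points agree with the interior-style vector $g$ in every coordinate except one; consequently each pairwise segment differs from $g$ in at most two coordinates, and a direct parametrization $p + t(q-p)$ drives it into the box on an explicit parameter interval such as $[1/b_i,\,1-1/b_{i'}]$ or $[1/b_i,\,1/2]$. The hypotheses $b_i \ge 2$ enter precisely to guarantee that these intervals are nonempty (e.g.\ $1/b_i + 1/b_{i'} \le 1$), which is where the assumption that the long segments have length at least~$2$ is used; with this, $\rc(X) \ge \card{H} = 2k+\ell$.

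The main obstacle is the case analysis underlying the lower bound: one must verify the crossing property for each type of pair (two ``plus'' long points, two ``minus'' long points, a plus and a minus in the same or in different long directions, a long point together with a short point, and two short points). The genuinely delicate case is a ``plus'' long point together with a short point, since a priori both lie on the high side of their distinguished coordinates; this is what forces the short coordinates of the long points $g \pm(\cdot)e_i$ to be set to $0$ rather than $1$, after which the long direction supplies the crossing. Making these short-coordinate assignments simultaneously consistent across all pair types is the crux of the argument; the upper bound, by contrast, is essentially bookkeeping layered on top of Lemma~\ref{lem:rc-box-one-long-edge}.
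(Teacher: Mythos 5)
Your proposal is correct and follows essentially the same route as the paper: the upper bound is Lemma~\ref{lem:rc-box-one-long-edge} plus a product argument (the paper forms $\conv(S_1 \times \cdots \times S_{k-1}) \times Q'$ in one step rather than adjoining the long segments iteratively), and your hiding set is, after translating the box so that each $S_i$ contains $0$ in its interior, exactly the paper's set $\{(a_i-1)e_i,\, (b_i+1)e_i,\, 2e_{k+j}\}$. The only real difference is bookkeeping: with the paper's normalization $a_i < 0 < b_i$ every pairwise \emph{midpoint} of the hiding set already lies in $\conv(X)$ --- and in fact the same holds for your $H$ precisely because $b_i \geq 2$ --- so the parametrized case analysis you flag as the crux collapses to a one-line midpoint check.
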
 
\begin{proof}
	Without loss of generality let $S_i = \{a_i,\ldots,b_i\}$ be such that $a_i < 0$ and $b_i > 0$, and let $T_j = \{0,1\}$, for all $1 \leq j \leq \ell$.
	We form a $2k + \ell$ element set~$H$ by attaching to each $S_i$ two points $(a_i-1) e_i$ and $(b_i+1) e_i$ in $\Z^{k + \ell}$, and to each~$T_j$  the point $2 e_{k+j}$ in $\Z^{k + \ell}$. It is straightforward to check that $H$ is a hiding set of~$X$. Indeed, by construction the midpoint of any two $p,q \in H$ with $p \ne q$ belongs to $\conv(X)$. This shows $\rc(X) \ge 2 k + \ell$. 

	To prove the upper bound $\rc(X) \le 2 k + \ell$ it suffices to verify the case $k=1$.
Indeed, if $k>0$, a relaxation $Q'$ of $S_k \times T_1 \times \ldots \times T_\ell$ with $2 + \ell$ facets gives rise to the relaxation $\conv(S_1 \times \ldots \times S_{k-1}) \times Q'$  of $X$ with $2 k + \ell$ facets. 
The case $k=1$ is however exactly the content of Lemma~\ref{lem:rc-box-one-long-edge}.
\end{proof}

\subsection{Discrete crosspolytopes}

Let~$\lozenge_d = \{ \zero, \pm e_1, \dots, \pm e_d\}$ be the discrete standard crosspolytope.
Weltge~\cite[Prop.~7.2.2]{weltge2015diss} proved that $\rc(\lozenge_d) \leq 2d$, for all $d \geq 4$ and wondered whether this bound is best possible.
The $2d$ bound is already quite surprising, as the crosspolytope~$\conv(\lozenge_d)$ has $2^d$ facets.
Using our efficient implementation for $\rc(X)$ that we describe in Section~\ref{sec:MIP} below, we obtained \emph{simplex} relaxations of~$\lozenge_d$ in small dimensions.
In the following we prove that $\rc(\lozenge_d)=d+1$ holds for every $d\geq3$.
To this end, we introduce, for two sets~$X,Y$, the notation $X \oplus Y
\define (X \times \{ \zero \}) \cup (\{\zero\} \times Y)$.
Note that then~$\lozenge_{d+1} = \lozenge_d \oplus \lozenge_1$.

\begin{lemma}
  \label{lem:crossIncrease2}
  Let~$d$ be a positive integer and let~$P = \{x \in \R^d \st Ax \leq
  \one\}$ be a relaxation of~$\lozenge_d$.
  Then,
  \[
    Q = \left\{ (x,y) \in \R^{d+1} : A(x - y e_1) \leq \one,\; -1 \leq x_1 + y \leq 1 \right\}
  \]
  is a relaxation of $\lozenge_{d+1} = \lozenge_d \oplus \lozenge_1$.
\end{lemma}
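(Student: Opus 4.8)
The plan is to verify the two inclusions defining $Q \cap \Z^{d+1} = \lozenge_{d+1}$ separately, working with the concrete description $\lozenge_{d+1} = \{\zero, \pm e_1, \dots, \pm e_d, \pm e_{d+1}\}$ in coordinates $(x,y) \in \R^d \times \R$, where $e_{d+1}$ points in the $y$-direction and the two ``new'' vertices $\pm e_{d+1}$ are precisely the points $(\zero, \pm 1)$. Throughout I would use only the standing hypothesis $P \cap \Z^d = \lozenge_d$ together with the all-ones right-hand side in $Ax \le \one$.

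For the inclusion $\lozenge_{d+1} \subseteq Q$, I would simply substitute each candidate point into the two defining blocks of $Q$. For a point $(v,0)$ with $v \in \lozenge_d$ the first block reads $Av \le \one$, which holds since $v \in P$, and the box constraint $-1 \le x_1 + y \le 1$ reduces to $-1 \le v_1 \le 1$, valid because $v_1 \in \{-1,0,1\}$. For the two new points $(\zero, \pm 1)$ one computes $x - y e_1 = \mp e_1 \in \lozenge_d \subseteq P$, so the first block holds, and $x_1 + y = \pm 1$ meets the box constraint. This step is entirely routine.

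The substance of the argument is the reverse inclusion, and the key device is the integer shear $w \define x - y e_1$. Given $(x,y) \in Q \cap \Z^{d+1}$, the vector $w$ is again integral (as $x \in \Z^d$ and $y \in \Z$), and the first block of $Q$ says exactly $Aw \le \one$, i.e.\ $w \in P$. Hence $w \in P \cap \Z^d = \lozenge_d$, so $w$ is one of the $2d+1$ crosspolytope vertices. I would then re-express the box constraint in terms of $w$: since $x_1 = w_1 + y$, we have $x_1 + y = w_1 + 2y$, so the constraint becomes $-1 \le w_1 + 2y \le 1$. A short case distinction on $w$ finishes the proof. If $w_1 = 0$ (that is, $w \in \{\zero, \pm e_2, \dots, \pm e_d\}$), integrality of $y$ forces $y = 0$, giving $(x,y) = (w, 0) \in \lozenge_{d+1}$. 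If $w = e_1$, the constraint yields $y \in \{-1, 0\}$, producing $(e_1, 0)$ or $(\zero, -1)$; symmetrically $w = -e_1$ gives $(-e_1, 0)$ or $(\zero, 1)$. In every case $(x,y) \in \lozenge_{d+1}$.

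The only point requiring care—and the one I expect to be the crux—is the coupling of the two constraint blocks through the shared variable $x_1$: the first block constrains $x - y e_1$ while the box constrains $x_1 + y$, and it is the substitution $x_1 + y = w_1 + 2y$ that links them correctly. This coupling is exactly what makes the new lattice points $\pm e_{d+1}$ appear (from $w = \mp e_1$, with $|y| = 1$) while ruling out any spurious integer point with $|y| \ge 2$. Beyond this observation, no geometric input other than $P \cap \Z^d = \lozenge_d$ is needed.
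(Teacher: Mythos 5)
Your proof is correct and follows essentially the same route as the paper: both inclusions are checked by direct substitution, and the reverse inclusion hinges on the identical key observation that $w = x - y e_1$ is integral, hence $w \in P \cap \Z^d = \lozenge_d$, combined with the box constraint on $x_1 + y$. The only difference is organizational --- the paper cases on the value of $y$ and needs a short contradiction argument to exclude $|y| \geq 2$, whereas your substitution $x_1 + y = w_1 + 2y$ with $w_1 \in \{-1,0,1\}$ bounds $y$ immediately --- which is marginally cleaner bookkeeping of the same idea.
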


\begin{proof}
  To show~$\lozenge_{d+1} \subseteq Q$, let~$(x, y) \in \lozenge_{d+1}$.
  Then, either~$x \in \lozenge_d$ and~$y = 0$, or~$x = \zero$ and~$y = \pm 1$.
  In both cases~$x - ye^1 \in \lozenge_d$ and~$\card{x_1 + y} \leq 1$, thus
  $(x,y) \in Q$.

  For the reverse inclusion assume $(x,y) \in \Z^{d+1}$ is a lattice point in~$Q$.
  Then, $x - y e_1 \in P \cap \Z^d = \lozenge_d$ and $|x_1 + y| \leq 1$.
  We proceed by a case distinction on~$y$.
  First, if $y = 0$, then $x = x - y e_1 \in \lozenge_d$ and hence $(x,y) = (x,0)
  \in \lozenge_d \times \{0\} \subseteq \lozenge_{d+1}$.
  Second, if $y = 1$, then by $x_1 + 1 = x_1 + y \leq 1$ we find that $x_1
  \leq 0$.
  Since also~$x - e_1 = x - y e_1 \in \lozenge_d$, we conclude~$x_1 = 0$ and
  $(-1,x_2,\ldots,x_d) \in \lozenge_d$.
  Hence, $x_2 = \ldots = x_d = 0$, showing $(x,y) = (\zero,1) = e_{d+1} \in
  \lozenge_{d+1}$.
  Third, suppose $y \geq 2$.
  Note that~$A(x - ye_1) \leq \one$ implies~$\card{x_1 - y} \leq 1$ as~$P$
  is a relaxation of~$\lozenge_d$.
  Together with~$\card{x_1 + y} \leq 1$, we find~$\card{x_1} \leq 1$.
  Further, from~$x_1 + y \leq 1$, we get $y \leq 1 - x_1 \leq 2$.
  So, in fact $y=2$ and $x_1 = -1$.
  But then we have $x - y e_1 = (-3,x_2,\ldots,x_d) \in \lozenge_d$, a contradiction.
  Finally, the cases of negative values of $y$ are analogous.  
\end{proof}

\goodbreak
\begin{lemma}
  \label{lem:crossIncrease1}  
  For any positive integer~$d$, we have $\rc(\lozenge_{d+2}) \leq \rc(\lozenge_{d}) + 2$.
\end{lemma}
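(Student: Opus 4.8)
The clean first attempt is to iterate Lemma~\ref{lem:crossIncrease2}: a relaxation of $\lozenge_d$ with $\rc(\lozenge_d)$ facets yields a relaxation of $\lozenge_{d+1}$ with $\rc(\lozenge_d)+2$ facets, and a second application produces a relaxation of $\lozenge_{d+2}$ with $\rc(\lozenge_d)+4$ facets. This overshoots by $2$ (indeed Lemma~\ref{lem:crossIncrease2} is already lossy per step, giving $\rc(\lozenge_d)+2$ where the truth is one less), so the plan is instead to pass \emph{directly} from $\lozenge_d$ to $\lozenge_{d+2}=\lozenge_d\oplus\lozenge_2$ by a single construction spending only two extra inequalities.

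Concretely, I would start from a relaxation $P=\{x\in\R^d:Ax\le\one\}$ of $\lozenge_d$ with $\rc(\lozenge_d)$ facets and build $Q\subseteq\R^{d+2}$ in variables $(x,y_1,y_2)$ by lifting each facet $\langle a_i,x\rangle\le 1$ to an inequality $\langle a_i,x\rangle+c_iy_1+d_iy_2\le 1$ and appending exactly two further inequalities in $y_1,y_2$. The coefficients and the two appended inequalities should be chosen to prescribe two slices of $Q$. First, $Q\cap(\R^d\times\{\zero\})=P$, which forces the two appended inequalities to be slack on $\lozenge_d$ and yields $Q\cap(\R^d\times\{\zero\})\cap\Z^{d+2}=\lozenge_d$. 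Second, the slice $Q\cap(\{\zero\}\times\R^2)$ should be a relaxation of the planar crosspolytope $\lozenge_2$, for instance the diamond $\{|y_1|+|y_2|\le1\}$, which has four facets. The point is to let two of those four bounding inequalities be inherited from (suitably lifted) facets of $P$, the other two being the appended ones: this sharing is exactly what keeps the facet count at $\rc(\lozenge_d)+2$ rather than $\rc(\lozenge_d)+4$. The forward inclusion $\lozenge_{d+2}\subseteq Q$ is then a short check on the generators $\pm e_{d+1},\pm e_{d+2}$ and on $\lozenge_d\times\{\zero\}$.

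The hard part, and where the construction is genuinely delicate, is the converse $Q\cap\Z^{d+2}=\lozenge_{d+2}$, i.e. that no spurious integer point survives. Two competing failure modes must be defeated at once. If one folds the new variables purely into coordinate directions, writing each lifted facet as $\langle a_i,x-y_1e_1-y_2e_2\rangle\le1$, then on the affine piece where $u\define x-y_1e_1-y_2e_2=\zero$ every facet of $P$ becomes vacuous, and two inequalities cannot isolate the single admissible point $(0,0)$ inside the two-dimensional lattice of candidates $(y_1e_1+y_2e_2,y_1,y_2)$; hence the lifts must carry genuine $y$-dependence, not merely a shift. If, on the other hand, one loads the $y$-dependence onto two facets and leaves the rest untouched, then for a simplex relaxation $P$ those two facets stop bounding $x$, and $Q$ becomes unbounded along a lattice direction carrying spurious points (already visible for $d=1$, where $P=[-1,1]$ has antipodal facets whose lifts remain antipodal). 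The verification must therefore control $x$ and $(y_1,y_2)$ simultaneously, exploiting that a point of $\lozenge_d$ has at most one nonzero coordinate to organize a case distinction on which coordinate block is active, strictly refining the case analysis of Lemma~\ref{lem:crossIncrease2}; I expect essentially all of the work to sit here, together with the choice of the $(c_i,d_i)$ that both realizes the diamond slice and destroys the antipodal facet pairs responsible for unboundedness.
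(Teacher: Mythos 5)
Your diagnosis of the difficulty is accurate: iterating Lemma~\ref{lem:crossIncrease2} twice costs $4$ extra inequalities, so the two new variables must somehow be absorbed into the existing facets, leaving only two genuinely new inequalities. But your proposal stops exactly where the proof has to start. You never specify the lifting coefficients $(c_i,d_i)$, never write down the two appended inequalities, and you explicitly defer the entire verification that no spurious integer point survives (``I expect essentially all of the work to sit here''). Prescribing the two slices $Q\cap(\R^d\times\{\zero\})=P$ and $Q\cap(\{\zero\}\times\R^2)$ being a diamond is far from sufficient: integer points in which \emph{both} coordinate blocks are nonzero are precisely the ones that kill naive constructions, and nothing in your outline controls them. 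As it stands, this is a plan together with a correct list of obstacles, not a proof.

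For comparison, the paper's proof does spend one application of Lemma~\ref{lem:crossIncrease2} to pass from $\lozenge_d$ to $\lozenge_{d+1}$ (cost $+2$), and then obtains the step from $d+1$ to $d+2$ \emph{for free} by a mechanism absent from your outline: the last new coordinate $z$ enters the \emph{right-hand sides} of the existing inequalities rather than appearing in new ones. After perturbing the relaxation of $\lozenge_{d+1}$ to be strict and to have suitably irrational coefficients $c_1,\dots,c_d$ (with $1,c_1,\dots,c_d$ linearly independent over $\Q$), the old facets receive right-hand side $(1-z)\one$ while the two absolute-value facets receive $(1+z)\varepsilon$. Since the relaxation contains the origin in its interior, setting the right-hand side of the first group to zero (i.e.\ $z=1$) collapses that subsystem to a single point, and irrationality guarantees the only integer solution is the origin, so $z=1$ forces $(x,y,z)=e_{d+2}$; symmetrically, $z=-1$ forces $-e_{d+2}$ via the second group, and $|z|\geq 2$ renders one of the two groups infeasible. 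This sign-opposed modulation of the right-hand sides, combined with the interior-point and irrationality arguments, is the key idea of the lemma; it is exactly the piece your proposal is missing, and without it (or a substitute of comparable substance) the statement is not proved.
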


\begin{proof}
Let $P = \{x \in \R^d : Ax \leq \one\}$ be a bounded relaxation of~$\lozenge_d$ with
$\rc(\lozenge_d)$ facets, which exists by~\cite[Thm.~1.4]{averkovschymura2020complexity}.
By Lemma~\ref{lem:crossIncrease2}, a relaxation of~$\lozenge_{d+1}$ is given by
\[Q = \{(x,y) \in \R^{d+1} : A(x - y e_1) \leq \one, |x_1 + y| \leq 1\}.\]
Observe that $Q$ is bounded, since~$P$ is.
Hence, there is an $\varepsilon > 1$ small enough such that with $A' =
\frac{1}{\varepsilon} A$, the polyhedron $Q' =
\{(x,y) \in \R^{d+1} : A'(x - y e_1) \leq \one, |x_1 + y| \leq
\varepsilon\}$ is a \emph{strict} relaxation of $\lozenge_{d+1}$.
Since~$Q'$ is strict, there are $c_1,\ldots,c_d \in \R \setminus \Q$
close enough to~$0$ such that also
\[
  Q'' = \{(x,y) \in \R^{d+1} :
  A'((1+c_1)x - y e_1) \leq \one, |(1+c_1)x_1 + c_2 x_2 + \ldots c_d x_d
  + y| \leq \varepsilon\}
\]
is a relaxation of $\lozenge_{d+1}$.
We further assume that $1,c_1,\ldots,c_d$ are chosen to be linearly
independent over~$\Q$.

We claim that with this set-up the polyhedron
\begin{align*}
R \define \big\{(x,y,z) \in \R^{d+2} : &\ A'((1+c_1)x - y e_1) \leq (1-z)\one,\\
&\ |(1+c_1)x_1 + c_2 x_2 + \ldots + c_d x_d + y| \leq (1+z)\varepsilon\big\}
\end{align*}
is a relaxation of $\lozenge_{d+2}$, which by construction has at most
$\rc(\lozenge_d) + 2$ facets.
First of all, for $\alpha \in \R$, consider the system $Ax \leq \alpha \one$.
Since for $\alpha=1$ the system describes the polyhedron~$P$ which contains
$\zero$ in its interior, the system with $\alpha=0$ describes only the
origin $\zero$, and for any $\alpha < 0$ the corresponding system is
infeasible.

Now, the containment $\lozenge_{d+2} \subseteq R \cap \Z^{d+2}$ is quickly
checked, since $Q''$ contains~$\lozenge_{d+1}$.
For the reverse inclusion, let $(x,y,z) \in R \cap \Z^{d+2}$ be an integer
point.
By the observation on the system $Ax \leq \alpha \one$ and the nature of
the inequalities $|(1+c_1)x_1 + y| \leq (1+z)\varepsilon$, the last
coordinate necessarily satisfies $z \in \{0,\pm 1\}$.

If $z=0$, then $(x,y,0) \in \lozenge_{d+2}$, because $(x,y) \in Q'' \cap \Z^{d+1}
= \lozenge_{d+1}$.
If $z=1$, then by the observation on the system $Ax \leq \alpha \one$, we
must have $(1+c_1)x = y e_1$, and thus $x = \zero$ and $y=0$, as $c_1$
was chosen irrational.
Hence, $(x,y,z) = e_{d+2} \in \lozenge_{d+2}$.
Finally, if $z = -1$, then $|(1+c_1)x_1 + c_2 x_2 + \ldots c_d x_d + y|
\leq 0$, which also implies $x = \zero$ and $y=0$, since $1,c_1,\ldots,c_d$
are chosen to be linearly independent over~$\Q$.
So, $(x,y,z) = -e_{d+2} \in \lozenge_{d+2}$ in this case, finishing the proof.
\end{proof}

\begin{theorem}
\label{thm:simplex-relaxation-crosspolytope}
  For every positive integer~$d \neq 2$, we have $\rc(\lozenge_d) = d + 1$.
\end{theorem}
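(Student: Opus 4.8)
The plan is to prove the two bounds $\rc(\lozenge_d) \ge d+1$ and $\rc(\lozenge_d) \le d+1$ separately, with the upper bound carried out by induction in steps of two using Lemma~\ref{lem:crossIncrease1}. The lower bound is the standard fact that every relaxation of a full-dimensional finite lattice-convex set in $\R^d$ has at least $\dim+1$ facets: a full-dimensional polyhedron described by at most $d$ inequalities is unbounded and therefore contains infinitely many lattice points, contradicting the finiteness of $\lozenge_d$. Hence $\rc(\lozenge_d) \ge d+1$ for every $d$, and the whole content of the theorem is to produce, for each $d \neq 2$, a simplex relaxation with exactly $d+1$ facets.

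For the upper bound I would use Lemma~\ref{lem:crossIncrease1}, which states $\rc(\lozenge_{d+2}) \le \rc(\lozenge_d)+2$. If $\rc(\lozenge_d)=d+1$, then $\rc(\lozenge_{d+2}) \le (d+1)+2 = (d+2)+1$, and together with the lower bound this forces equality; so the induction step is automatic once suitable base cases are available. The crucial feature is that this recursion raises the dimension by two, hence \emph{preserves parity}. Consequently I need exactly one base case in each residue class modulo~$2$.

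The odd base case is $d=1$: the interval $P = \{x \in \R : -1 \le x \le 1\}$ has two facets and satisfies $P \cap \Z = \{-1,0,1\} = \lozenge_1$, so $\rc(\lozenge_1)=2$. Starting here, Lemma~\ref{lem:crossIncrease1} covers all odd $d \ge 1$ (in particular $\rc(\lozenge_3) \le \rc(\lozenge_1)+2 = 4$). The even base case is $d=4$, and this is the only genuinely new ingredient. One cannot seed the even chain at dimension two, because $d=2$ is precisely the exceptional dimension with $\rc(\lozenge_2)=4$, so the recursion from $\lozenge_2$ would only yield $\rc(\lozenge_4) \le 6$. Instead I would exhibit an explicit four-dimensional simplex $P \subseteq \R^4$ with five facets and verify directly that $P \cap \Z^4 = \lozenge_4$; such a relaxation can be located with the implementation described in Section~\ref{sec:MIP}. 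Feeding $\rc(\lozenge_4)=5$ into Lemma~\ref{lem:crossIncrease1} then covers all even $d \ge 4$, which completes the case distinction $d \neq 2$.

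I expect the main obstacle to be the verification of the base case $\rc(\lozenge_4) \le 5$: one must commit to concrete facet normals and right-hand sides and check both that the simplex contains the nine points of $\lozenge_4$ and that it admits \emph{no} further lattice point. Ruling out stray integer points is the delicate part, and I anticipate using irrational coefficients together with $\Q$-linear independence, exactly in the spirit of the perturbation arguments appearing in the proofs of Lemmas~\ref{lem:crossIncrease2} and~\ref{lem:crossIncrease1}, so that no unintended integer vector can satisfy all five inequalities simultaneously. Everything else---the lower bound, the trivial base $d=1$, and the parity-preserving induction---is routine once Lemma~\ref{lem:crossIncrease1} is in hand.
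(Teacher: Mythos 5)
Your upper bound is structured exactly as the paper's proof --- induction in steps of two via Lemma~\ref{lem:crossIncrease1}, with one base case in each parity class --- and seeding the odd chain at $d=1$ instead of $d=3$ is a legitimate variant, since the lemma is stated for every positive integer $d$ and gives $\rc(\lozenge_3) \le \rc(\lozenge_1)+2 = 4$. However, the proposal has a genuine gap in the lower bound, and the even base case is announced rather than proved.

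The lower bound argument is wrong as stated. You claim that a full-dimensional polyhedron in $\R^d$ described by at most $d$ inequalities is unbounded (true) ``and therefore contains infinitely many lattice points''. This implication is false: the strip $\left\{ x \in \R^2 : \tfrac{1}{3} \le x_1 - x_2 \le \tfrac{2}{3} \right\}$ is full-dimensional, unbounded, has two facets, and contains no lattice point at all. What you actually need is the implication for polyhedra containing $\lozenge_d$, and that version is easy only for \emph{rational} polyhedra: an unbounded rational polyhedron has an integral recession direction $r$, and then $0+kr$, $k \in \Z_{>0}$, yields infinitely many integer points. Since the definition of $\rc$ permits irrational relaxations, your argument only establishes $\rc_\Q(\lozenge_d) \ge d+1$. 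For irrational unbounded polyhedra the recession directions can avoid all rational rays, and whether such a polyhedron must still capture infinitely many lattice points is precisely the hard open issue of this subject: if your one-line argument were valid, it would prove $\rc(X) \ge d+1$ for \emph{every} full-dimensional lattice-convex $X \subseteq \Z^d$, in particular settling Weltge's conjecture $\rc(\Delta_d) = d+1$, which the paper states is open for $d \ge 5$, whose best known general lower bound is only doubly logarithmic, and for which Theorem~\ref{thm:rc-square-simplex} shows that not even a finite certificate exists when $d \ge 4$. The paper instead exploits a special property of the crosspolytope: $\conv(\lozenge_d)$ contains the origin as an interior lattice point, which by~\cite[Thm.~4.5]{averkovschymura2020complexity} forces \emph{every} relaxation of $\lozenge_d$ (rational or not) to be bounded; boundedness then gives $\rc(\lozenge_d) = \rc_\Q(\lozenge_d) \ge d+1$ via~\cite[Rem.~3.7]{averkovschymura2020complexity}. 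Without invoking a result of this kind, your lower bound does not stand.

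On the upper bound, the one genuinely new ingredient --- a five-facet relaxation of $\lozenge_4$ --- is exactly what you leave unproved: you say such a simplex ``can be located with the implementation'' of Section~\ref{sec:MIP} and anticipate an irrationality argument to exclude stray integer points, but no concrete inequalities are exhibited or verified. The paper supplies this datum explicitly: integral simplices with four and five facets whose integer points are exactly $\lozenge_3$ and $\lozenge_4$, respectively; their verification is a finite check, and no irrational coefficients are needed in the base cases (irrationality enters only inside the proof of Lemma~\ref{lem:crossIncrease1}). Until a concrete relaxation of $\lozenge_4$ is produced and checked, the even half of your induction has no starting point.
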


\begin{proof}
  Obviously, the result holds for~$d=1$.
  For~$d \geq 3$, observe that
  every relaxation of $\lozenge_d$ is bounded,
  since $\conv(\lozenge_d)$ contains the origin in its interior, cf.~\cite[Thm.~4.5]{averkovschymura2020complexity}.
This implies $\rc(\lozenge_d) = \rc_\Q(\lozenge_d) \geq d+1$
(see~\cite[Rem.~3.7]{averkovschymura2020complexity}).
For the upper bound, it is sufficient to prove the statement for~$d \in
\{3,4\}$, as the remaining cases follow by induction via
Lemma~\ref{lem:crossIncrease1}.
One can verify that such relaxations are, e.g.,
\begin{align*}
  &\left\{ x \in \R^3 : \left(\begin{smallmatrix*}[r] 8 & 12 & -13 \\ -8 & -12 & -13 \\ 12 & -8 & 13\\ -12 & 8 & 13 \end{smallmatrix*}\right) \left(\begin{smallmatrix*}[r] x_1\\ x_2\\ x_3\end{smallmatrix*}\right) \leq \left(\begin{smallmatrix*}[r] 13\\ 13\\ 13\\ 13\end{smallmatrix*}\right)\right\}\qquad \text{and}\\
&
\left\{ x \in \R^4 : \left(\begin{smallmatrix*}[r] 6 & -7 & -7 & -5 \\ -7 & 3 & -2 & -7 \\ 9 & 7 & 9 & -4 \\ 1 & 2 & -2 & 2 \\ -6 & -6 & 5 & 2 \end{smallmatrix*}\right) \left(\begin{smallmatrix*}[r] x_1\\ x_2\\ x_3 \\ x_4\end{smallmatrix*}\right) \leq \left(\begin{smallmatrix*}[r] 7\\ 7\\ 9\\ 2\\ 6\end{smallmatrix*}\right)\right\}
\end{align*}
for~$\lozenge_3$ and~$\lozenge_4$, respectively.
\end{proof}

\section{Mixed-integer programming formulations for computing the relaxation complexity}
\label{sec:MIP}

In this section, we briefly discuss mixed-integer programming models for the
computation of~$\rc_\varepsilon(X,Y)$ and~$\rc(X,Y)$, respectively, and
evaluate their performance.

\paragraph{Compact model}
In~\cite{averkovschymura2020complexity}, a mixed-integer programming
formulation has been proposed to check whether a lattice-convex
set~$X \subseteq \Z^d$ admits a relaxation w.r.t.~a finite set $Y \subseteq \Z^d \setminus X$ with~$k$ inequalities.
With a minor modification, this model can also be used to
compute~$\rc_\varepsilon(X,Y)$, and thus~$\rc(X,Y)$, if~$\varepsilon$ is
sufficiently small.
In the following, we just present the idea of this model and refer the
reader to~\cite{averkovschymura2020complexity} for more details.

Given an upper bound~$k$ on the number of inequalities needed to separate~$X$
and~$Y$, the idea is to introduce variables~$a_{ij}$
and~$b_i$, $(i,j) \in [k] \times [d]$, to model the~$k$ potential
inequalities $\sum_{j = 1}^d a_{ij} x_j \leq b_i$ needed in a relaxation.
By rescaling, one can assume without loss of generality that $a_{ij} \in [-1,1]$ and~$b_i \in [-d \rho_X, d
\rho_X]$ with~$\rho_X =\max \{ \| x\|_\infty \st x \in X\}$.
Moreover, for each~$i \in [k]$ and~$y \in Y$, a binary variable~$s_{iy}$ is
introduced that indicates whether the~$i$-th inequality is violated by~$y$;
binary variables~$u_i$, $i \in [k]$, indicate whether the~$i$-th inequality
is used in a relaxation.
To model that inequality~$i$ is violated by at least~$\varepsilon$ for~$y
\in Y$ if~$s_{iy} = 1$, a big-M constraint is used.

In its basic version, this model is rather difficult to
solve for a black box MIP solver.
Reasons for this are (\romannumeral1) the big-M
inequalities, (\romannumeral2) symmetries of the
problem formulation, and (\romannumeral3) combinatorial properties of the
relaxation complexity that are not expressed in the model.
To overcome these issues, we have used the following enhancements in our
implementation.
We replace the big-M constraints by so-called indicator constraints which
encode the big-M constraint without introducing big-M terms explicitly,
cf.\ Belotti et al.~\cite{BelottiEtAl2016}.
To handle symmetries, we enforce the used inequalities to be sorted
w.r.t.\ their first coefficient in~$a$;
the unused inequalities are fixed to~$\sprod{0}{x} \leq d \rho_X$.
Finally,  we derived additional inequalities that can be used as cutting planes.
These inequalities are based on hiding sets~$H \subseteq Y$ and encode that
each inequality valid for~$X$ can cut off at most one point from~$H$.
This can be expressed via the \emph{hiding set cuts}
$\sum_{y \in H} s_{iy} \leq 1$, $i \in [k]$.
Although these cuts are the stronger the bigger the underlying hiding set is, we
add these inequalities just for hiding sets of size~2, because the latter
can be computed relatively efficiently by a brute-force algorithm.

\paragraph{Column generation based model}
Based on Observation~\ref{obs:RCintervalcovering}, we can also devise an alternative
integer programming formulation to compute~$\rc(X,Y)$.
To this end, let
\[
  \I = \I(X, Y) \define
  \{ I \subseteq Y \st \conv(I) \cap \conv(X) = \emptyset\}.
\]
Then, $\rc(X,Y)$ is the smallest number~$k$ of sets~$I_1, \dots, I_k \in
\I$ with~$Y = \bigcup_{i = 1}^k I_i$, which leads immediately to the
following integer programming formulation
\begin{equation}
  \label{eq:CGmodel}
  \min_{z \in \Z_+^{\I}} \Big\{
  \sum_{I \in \I} z_I \st \sum_{I \in I_y} z_I \geq 1,\; y \in Y
  \Big\},
\end{equation}
where~$I_y = \{ I \in \I \st y \in I\}$.

Since this formulation consists of exponentially many variables, we cannot
solve it immediately using standard MIP solvers.
In our implementation, we thus use a branch-and-price procedure for
solving~\eqref{eq:CGmodel}, i.e., we use a branch-and-bound procedure in
which each LP relaxation is solved by column generation.

The core of the column generation procedure is to solve the pricing
problem.
Since the dual of the LP relaxation is
\[
  \max_{\alpha \in \R_+^Y} \Big\{ \sum_{y \in Y} \alpha_y \st \sum_{y \in I} \alpha_y \leq 1,\;
  I \in \mathcal{I}\Big\},
\]
the pricing problem is to find, for fixed shadow prices~$\bar{\alpha} \in
\R_+^Y$, a set~$I \in \mathcal{I}$ with~$\sum_{y \in I} \bar{\alpha}_y > 1$
or to show that no such set exists.
Since this problem is NP-hard as it is a generalization of the open
hemisphere problem, see Johnson \& Preparata~\cite{JohnsonPreparata1978},
we model the pricing problem as a MIP in our implementation.
The MIP model that we have used is a variant of the compact model discussed
before with~$k = 1$.

The branching strategy of our branch-and-price algorithm is to select two
sets~$I$ and~$J$ whose corresponding~$\alpha$-variables are fractional in
the LP relaxation such that both the intersection~$I \cap J$ and the
symmetric difference~$I \Delta J$ are non-empty.
Then, we select~$y_1 \in I \cap J$ and~$y_2 \in I \Delta J$ and create two
child nodes.
In one, we enforce that~$y_1$ and~$y_2$ are contained in the same sets~$I \in
\mathcal{I}$; in the other, $y_1$ and~$y_2$ have to be contained in
different sets from~$\mathcal{I}$ used by the LP relaxation.

\paragraph{Hybrid model}

Our implementation also allows to use a hybrid model.
Instead of solving the full problem using Model~\eqref{eq:CGmodel}, we only
solve its LP relaxation using column generation.
Then, the compact model is solved after it has been initialized with the best
primal solution found during solving the LP relaxation
of~\eqref{eq:CGmodel} and the value of the LP relaxation has been transferred as
dual bound to the compact model.

\paragraph{An ad-hoc method to compute the relaxation complexity}
If~$X$ has a finite set of observers, we can at least theoretically use
the algorithm from~\cite{averkovschymura2020complexity} to compute~$\obs(X)$ and
solve the compact or column generation model to
find~$\rc_\varepsilon(X, \obs(X)) = \rc_\varepsilon(X)$ and~$\rc(X,
\obs(X)) = \rc(X)$, respectively.
In practice, however, $\obs(X)$ might be very large or even
infinite already in small dimensions and solving these models becomes
costly or even impossible.
A more ad-hoc fashion for finding~$\rc_\varepsilon(X)$ and~$\rc(X)$ is
inspired by Theorem~\ref{thm:epsrelaxProperties}.
Instead of starting with the full set of observers, we select a small
set~$Y \subseteq \obs(X)$ and compute~$\rc_\varepsilon(X,Y)$
and~$\rc(X,Y)$, respectively.
If the resulting set of inequalities is already a relaxation of~$X$, we
stop.
Otherwise, there exists a non-empty set~$Y' \subseteq \obs(X) \setminus Y$
that is not separated from~$X$.
We can extend the set~$Y$ by~$Y'$ and iterate this procedure until a
relaxation of~$X$ has been found.

Of course, this procedure is only guaranteed to work if~$\rcl(X) =
\rcu(X)$.
Nevertheless, we could use it to compute~$\rc(X)$ for particular choices
of~$X$ without computing~$\obs(X)$ explicitly.

\paragraph{Numerical experiments}
Our implementation consists of two separate codes, which are publicly
available\footnote{\url{https://github.com/christopherhojny/relaxation_complexity},
  experiments based on version with git hash \texttt{55ed5d10}}.
The first code essentially consists of one method for
computing~$\rc(X, \obs(X))$ (provided $\obs(X)$ is finite) and another
method to compute~$H(X)$, which has been used as inspiration for
Theorem~\ref{thm:hiding-set-bound-plane}.
To find~$\obs(X)$, we use a variant of the algorithm suggested
in~\cite{averkovschymura2020complexity}, and~$\rc(X, \obs(X))$ is computed
using the compact MIP model.
We compute~$H(X)$ by finding a maximum cardinality clique in the hiding
graph by solving an integer program.
We have implemented the aforementioned methods in Python~3.7.8, using
SageMath~9.1~\cite{sagemath} for polyhedral computations;
all mixed-integer programs were solved using
SCIP~7.0.0~\cite{GamrathEtal2020OO}, which has
been called via its Python
interface~\cite{MaherMiltenbergerPedrosoRehfeldtSchwarzSerrano2016}.
Note that SCIP is not an exact solver and thus the results reported below
are only correct up to numerical tolerances.

We have used this implementation to investigate two properties of~$\rc(X)$
for lattice polygons in dimension~2.
More specifically, we used the representatives of all lattice
polygons~$P$ with at least one and at most~12 interior integer points provided
by Castryck~\cite{castryck2012movingout}.
In our first experiment, we compared the exact value of the relaxation
complexity with the hiding set lower bound.
Table~\ref{tab:deviationHiding} shows the deviation of these two values
parameterized by the number of edges of~$P$,
which led us to Theorem~\ref{thm:hiding-set-bound-plane}, i.e.,
the hiding set bound deviates by at most one.
Interestingly, for~$k \in \{4,6,7,8,9\}$, the hiding set bound is distributed relatively
equally between~$\rc(X)$ and~$\rc(X)-1$. 

\begin{table}[t]
  \begin{scriptsize}
    \caption{Distribution of deviation of maximum hiding set sizes from relaxation complexity in percent.}
    \label{tab:deviationHiding}
    \begin{tabular*}{\textwidth}{@{}l@{\;\;\extracolsep{\fill}}rrrrrrrr@{}}\toprule
    & \multicolumn{8}{c}{number of facets}\\
    \cmidrule{2-9}
    size max.\ hiding set & 3 & 4 & 5 & 6 & 7 & 8 & 9 & 10\\
    \midrule
    rc & 100.00 & 45.19 & 36.70 & 45.79 & 55.09 & 52.71 & 45.90 & 100.00 \\
    rc - 1 & --- & 54.81 & 63.30 & 54.21 & 44.91 & 47.29 & 54.10 & --- \\
    \bottomrule
    \end{tabular*}
  \end{scriptsize}
\end{table}

The second experiment compares the relaxation complexity with the number of
edges of~$P$, see Table~\ref{tab:distributionRC}.
Although~$P$ can be rather complex with up to~10 edges (where~10 is
realized by a single instance), the relaxation complexity is at most six,
where for the majority of all tested instances the relaxation complexity is
either four or five.
Moreover, there exist polygons with up to nine edges that admit
simplicial relaxations.
Thus, already in dimension~2, the difference between the number of facets
and the relaxation complexity can be very large.
In high dimensions this phenomenon does not come as a surprise, since there are knapsack polytopes whose integer hull has super-polynomially many facets (cf.~Pokutta \& Van Vyve~\cite[Cor.~3.8]{pokuttavyve2013anote}).

\begin{table}[t]
  \begin{scriptsize}
    \caption{Distribution of relaxation complexities compared to number of facets in percent.}
    \label{tab:distributionRC}
    \begin{tabular*}{\textwidth}{@{}l@{\;\;\extracolsep{\fill}}rrrrrrrr@{}}\toprule
    & \multicolumn{8}{c}{number of facets}\\
    \cmidrule{2-9}
    relaxation complexity & 3 & 4 & 5 & 6 & 7 & 8 & 9 & 10\\
    \midrule
    3 & 100.00 & 5.20 & 3.89 & 3.40 & 3.71 & 2.61 & 1.64 & --- \\
    4 & --- & 94.80 & 66.92 & 54.79 & 51.32 & 42.17 & 40.98 & 100.00 \\
    5 & --- & --- & 29.19 & 40.39 & 43.49 & 54.16 & 55.74 & --- \\
    6 & --- & --- & --- & 1.42 & 1.48 & 1.06 & 1.64 & --- \\
    \bottomrule
    \end{tabular*}
  \end{scriptsize}
\end{table}

The second code is a C/C++ implementation of the compact, column
generation, and the hybrid model to find~$\rc(X, Y)$.
The implementation uses SCIP~7.0.2 as branch-and-bound (-and-price)
framework and SoPlex~5.0.2 as LP solver; we use cddlib version
0.94m\footnote{source code available at
  \url{https://github.com/cddlib/cddlib}} for convex hull computations, e.g., to find hiding set cuts.
In the following, we briefly illustrate that selecting the right model is
crucial to efficiently find~$\rc(X, Y)$.
As test set, we use the sets~$X \in \{\Delta_3, \Delta_4, \lozenge_4, \lozenge_5\}$
and~$Y$ being all integer points at~$\ell_1$-distance at most~$k$ from~$X$,
where~$k \in [9]$ for simplices and~$k \in [7]$ for crosspolytopes.
These are the test sets that lead us to
Theorems~\ref{thm:rc-square-simplex}
and~\ref{thm:simplex-relaxation-crosspolytope}.
These experiments were run on a Linux cluster with Intel Xeon E5
\SI{3.5}{\GHz} quad core processors and \SI{32}{\giga\byte} memory.
The code was executed using a single thread.  The time limit of all
computations is \SI{4}{\hour} per instance.

\begin{table}[t]
  \caption{Comparison of the different methods for computing $\rc(X, Y)$.}
  \label{tab:compare}
  \begin{tabular*}{\textwidth}{@{}l@{\;\;\extracolsep{\fill}}rrrrrr@{}}\toprule
     & \multicolumn{2}{c}{compact} & \multicolumn{2}{c}{column generation} & \multicolumn{2}{c}{hybrid}\\
    \cmidrule{2-3} \cmidrule{4-5} \cmidrule{6-7}
    test set & time & \#opt & time & \#opt & time & \#opt\\
    \midrule
    $\Delta_3$ ($\ell_1$: 1--9) &  \num{ 4.16} & \num{ 9}  &  \num{ 64.97} & \num{ 9}  &  \num{ 2.37} & \num{ 9} \\
    $\Delta_4$ ($\ell_1$: 1--9) &  \num{ 743.74} & \num{ 7}  &  \num{2355.48} & \num{ 4}  &  \num{ 330.60} & \num{ 7} \\
    $\lozenge_4$ ($\ell_1$: 1--7) &  \num{ 663.04} & \num{ 5}  &  \num{1528.20} & \num{ 4}  &  \num{ 47.85} & \num{ 7} \\
    $\lozenge_5$ ($\ell_1$: 1--7) &  \num{3481.59} & \num{ 4}  &  \num{11598.31} & \num{ 1}  &  \num{ 282.58} & \num{ 6} \\
    \bottomrule
  \end{tabular*}
\end{table}

Table~\ref{tab:compare} summarizes our experiments and reports running time in shifted
geometric mean~$\prod_{i = 1}^n (t_i + 10)^{\nicefrac{1}{n}} - 10$ to
reduce the impact of outliers.
In general, we can see that the compact model performs better than the
column generation approach both in terms of running time and number of
solved instances (marked as \#opt).
An explanation for this is the very costly pricing problem.
However, we could observe that both the primal and dual bound after solving
the root node are in general much tighter than in the compact model.
A possible explanation for the better upper bound is that the variables
generated by column generation could be well-structured such that SCIP can
easily find good integer solutions for the covering formulation of~\eqref{eq:CGmodel}.
The meaning of the variables of the compact model, however, seems to be
less clear to SCIP such that heuristics cannot find feasible solutions
easily.
This observation also explains why the hybrid model performs much better
than the two separate methods: it benefits from the good bounds obtained by
column generation and the quick solvability of each node of the
branch-and-bound tree in the compact model.
In particular for~$\lozenge_5$, the hybrid model allows to solve two more
instances than the compact model and reduces its running time
by~\SI{91.9}{\percent}.

\bigskip
\textbf{Acknowledgements}
We thank Matija Buci\'{c} and Benny Sudakov for discussions related to
Theorem~\ref{thm:rc:squar:delta:d} and Stefan Weltge for motivating
discussions on the topic of this paper.

\bibliographystyle{spmpsci}      

\end{document}